\numberwithin{equation}{section}
\newtheorem{theorem}{Theorem}[section]
\newtheorem{corollary}[theorem]{Corollary}
\newtheorem{assumption}[theorem]{Assumption}
\newtheorem{lemma}[theorem]{Lemma}
\newtheorem{proposition}[theorem]{Proposition}
\theoremstyle{remark}
\newtheorem{remark}[theorem]{Remark}
\newtheorem{example}[theorem]{Example}
\theoremstyle{definition}
\newtheorem{definition}[theorem]{Definition}
\newcommand\bp{\begin{proof}}
\newcommand\ep{\end{proof}}
\newcommand\biopencrossl{%
  \mathrel{\scalerel*{>\kern-.4\LMpt\joinrel\blacktriangleleft}{x}}}
\newcommand\biopencrossr{%
  \mathrel{\scalerel*{\blacktriangleright\joinrel\kern-.4\LMpt<}{x}}}
\newcommand\Dhat{{\hat\Delta}}
\newcommand\Ad{\operatorname{Ad}}
\newcommand\HS{\operatorname{HS}}
\newcommand\Op{\operatorname{Op}}
\newcommand\Tr{\operatorname{Tr}}
\newcommand{\C}{{\mathbb C}}
\newcommand{\R}{{\mathbb R}}
\newcommand\T{{\mathbb T}}
\newcommand{\G}{{\mathcal G}}
\newcommand{\N}{{\mathcal N}}
\newcommand{\U}{{\mathcal U}}
\newcommand{\CL}{{\mathcal L}{}}
\newcommand{\CJ}{\mathcal J}
\newcommand{\CH}{\mathcal H}
\newcommand{\CU}{\mathcal U}
\newcommand{\CF}{\mathcal F}
\newcommand{\K}{\mathbb K}
\newcommand{\vf}{\varphi}
\DeclarePairedDelimiter\ang{\langle}{\rangle}
\newlength{\meanderUnitlength}
\newlength{\meanderThickness}
\newcounter{meanderXsep}
\newcounter{meanderYsep}
\newcounter{meanderSum}
\newcounter{meanderTopMax}
\newcounter{meanderBottomMax}
\newcounter{meanderYlevel}
\newcounter{meanderXsize}
\newcounter{meanderYsize}
\newcounter{meanderBulletSize}
\newcounter{meanderSnode}
\newcounter{meanderArcNumber}
\newcounter{meanderloopcount}
\newcounter{meanderA}
\newcounter{meanderB}
\newcounter{meanderM}
\newcounter{meanderH}
\newcounter{meanderarcstyle}
\newcommand{\resetmeanderdefaults}{%
\setlength{\meanderUnitlength}{\unitlength} 
\setlength{\meanderThickness}{0.6pt}
\setcounter{meanderXsep}{24}
\setcounter{meanderYsep}{10}
\setcounter{meanderBulletSize}{3}
\setcounter{meanderarcstyle}{0}
}
\newcommand{\meander}[2]{%
\setcounter{meanderTopMax}{0}
\setcounter{meanderBottomMax}{0}
\setcounter{meanderSum}{0}
\foreach \x in {#1}
	{
	\addtocounter{meanderSum}{\x}
	\ifthenelse{\x > \themeanderTopMax}{\setcounter{meanderTopMax}{\x}}{}
	}
\foreach \x in {#2}
	{
	\ifthenelse{\x > \themeanderBottomMax}{\setcounter{meanderBottomMax}{\x}}{}
	}
\setcounter{meanderYlevel}{\themeanderYsep*\themeanderBottomMax}
\setcounter{meanderXsize}{\themeanderXsep*\themeanderSum-\themeanderXsep}
\setcounter{meanderYsize}{\themeanderYsep*(\themeanderTopMax +\themeanderBottomMax)}
\setlength{\unitlength}{\meanderUnitlength}
\begin{array}{c}
\begin{picture}(\themeanderXsize,\themeanderYsize)
\linethickness{\meanderThickness}
\ifthenelse{\themeanderBulletSize>0}{\multiput(0,\themeanderYlevel)(\themeanderXsep,0){\themeanderSum}{\circle*{\themeanderBulletSize}}}{}
\setcounter{meanderSnode}{0}
\foreach \x in {#1}
	{
	\addtocounter{meanderSnode}{1}
	\ifthenelse{\isodd{\x}}{\setcounter{meanderArcNumber}{(\x-1)/2}}{\setcounter{meanderArcNumber}{\x/2}}
	\setcounter{meanderloopcount}{0}
	\whiledo{\themeanderloopcount < \themeanderArcNumber}
		{
		\setcounter{meanderA}{(\themeanderSnode+\themeanderloopcount-1)*\themeanderXsep}
		\setcounter{meanderB}{(\themeanderSnode+\x - \themeanderloopcount-2)*\themeanderXsep}
		\setcounter{meanderM}{(\themeanderA+\themeanderB)/2}
		\setcounter{meanderH}{\themeanderYsep*(\x-2*\themeanderloopcount-1)+\themeanderYlevel}
		\ifthenelse{\themeanderarcstyle=1}%
			{
			\qbezier(\themeanderA,\themeanderYlevel)(\themeanderM,\themeanderH)(\themeanderM,\themeanderH)
			\qbezier(\themeanderM,\themeanderH)(\themeanderB,\themeanderYlevel)(\themeanderB,\themeanderYlevel)
			}
			{
			\qbezier(\themeanderA,\themeanderYlevel)(\themeanderA,\themeanderH)(\themeanderM,\themeanderH)
			\qbezier(\themeanderM,\themeanderH)(\themeanderB,\themeanderH)(\themeanderB,\themeanderYlevel)
			}
		\addtocounter{meanderloopcount}{1}
		}
	\addtocounter{meanderSnode}{\x-1}
	}
\setcounter{meanderSnode}{0}
\foreach \x in {#2}
	{
	\addtocounter{meanderSnode}{1}
	\ifthenelse{\isodd{\x}}{\setcounter{meanderArcNumber}{(\x-1)/2}}{\setcounter{meanderArcNumber}{\x/2}}
	\setcounter{meanderloopcount}{0}
	\whiledo{\themeanderloopcount < \themeanderArcNumber}
		{
		\setcounter{meanderA}{(\themeanderSnode+\themeanderloopcount-1)*\themeanderXsep}
		\setcounter{meanderB}{(\themeanderSnode+\x - \themeanderloopcount-2)*\themeanderXsep}
		\setcounter{meanderM}{(\themeanderA+\themeanderB)/2}
		\setcounter{meanderH}{-\themeanderYsep*(\x-2*\themeanderloopcount-1)+\themeanderYlevel}
		\ifthenelse{\themeanderarcstyle=1}%
			{
			\qbezier(\themeanderA,\themeanderYlevel)(\themeanderM,\themeanderH)(\themeanderM,\themeanderH)
			\qbezier(\themeanderM,\themeanderH)(\themeanderB,\themeanderYlevel)(\themeanderB,\themeanderYlevel)
			}
			{
			\qbezier(\themeanderA,\themeanderYlevel)(\themeanderA,\themeanderH)(\themeanderM,\themeanderH)
			\qbezier(\themeanderM,\themeanderH)(\themeanderB,\themeanderH)(\themeanderB,\themeanderYlevel)
			}
		\addtocounter{meanderloopcount}{1}
		}
	\addtocounter{meanderSnode}{\x-1}
	}
\end{picture} \\
\end{array}
}
\newcommand{\meanderstyle}[1]{%
\ifthenelse{\equal{#1}{plain}}{\setcounter{meanderBulletSize}{0}\setcounter{meanderarcstyle}{0}}{}
\ifthenelse{\equal{#1}{default}}{\setcounter{meanderBulletSize}{3}\setcounter{meanderarcstyle}{0}}{}
\ifthenelse{\equal{#1}{line}}{\setcounter{meanderBulletSize}{3}\setcounter{meanderarcstyle}{1}}{}
\ifthenelse{\equal{#1}{plainline}}{\setcounter{meanderBulletSize}{0}\setcounter{meanderarcstyle}{1}}{}
}
\newcommand{\meandersize}[1]{%
\ifthenelse{\equal{#1}{Huge}}{\setlength{\meanderUnitlength}{1.8pt}}{}
\ifthenelse{\equal{#1}{huge}}{\setlength{\meanderUnitlength}{1.6pt}}{}
\ifthenelse{\equal{#1}{Large}}{\setlength{\meanderUnitlength}{1.4pt}}{}
\ifthenelse{\equal{#1}{large}}{\setlength{\meanderUnitlength}{1.2pt}}{}
\ifthenelse{\equal{#1}{default}}{\setlength{\meanderUnitlength}{1.0pt}}{}
\ifthenelse{\equal{#1}{Small}}{\setlength{\meanderUnitlength}{0.8pt}}{}
\ifthenelse{\equal{#1}{small}}{\setlength{\meanderUnitlength}{0.6pt}}{}
\ifthenelse{\equal{#1}{Tiny}}{\setlength{\meanderUnitlength}{0.4pt}}{}
\ifthenelse{\equal{#1}{tiny}}{\setlength{\meanderUnitlength}{0.2pt}}{}
\ifthenelse{\equal{#1}{minuscule}}{\setlength{\meanderUnitlength}{0.1pt}}{}
}
\newcommand{\meanderthickness}[1]{%
\ifthenelse{\equal{#1}{Thin}}{\setlength{\meanderThickness}{0.2pt}}{}
\ifthenelse{\equal{#1}{thin}}{\setlength{\meanderThickness}{0.4pt}}{}
\ifthenelse{\equal{#1}{default}}{\setlength{\meanderThickness}{0.6pt}}{}
\ifthenelse{\equal{#1}{thick}}{\setlength{\meanderThickness}{0.8pt}}{}
\ifthenelse{\equal{#1}{Thick}}{\setlength{\meanderThickness}{1pt}}{}
\ifthenelse{\equal{#1}{wide}}{\setlength{\meanderThickness}{1.2pt}}{}
\ifthenelse{\equal{#1}{Wide}}{\setlength{\meanderThickness}{1.4pt}}{}
}
\begin{document}

\title{From projective representations to pentagonal cohomology via quantization}

\author[Gayral]{Victor Gayral}
\email{victor.gayral@univ-reims.fr}

\address{Laboratoire de Math\'ematiques, CNRS FRE 2011, Universit\'e de Reims Champagne-Ardenne,
Moulin de la Housse - BP 1039,
51687 Reims, France}

\author[Marie]{Valentin Marie}
\email{valentin.marie@univ-reims.fr}

\address{Laboratoire de Math\'ematiques, CNRS FRE 2011, Universit\'e de Reims Champagne-Ardenne,
Moulin de la Housse - BP 1039,
51687 Reims, France}

\maketitle
\begin{abstract}
Given a locally compact group $G=Q\ltimes V$ such that $V$ is Abelian and such that the  action
of $Q$ on the Pontryagin dual $\hat V$ has a free orbit of full measure, we construct a family
of unitary dual $2$-cocycles  $\Omega_\omega$ (aka non-formal Drinfel'd twists) whose equivalence classes 
$[\Omega_\omega]\in H^2(\hat G,\mathbb T)$ are 
parametrized by cohomology classes
$[\omega]\in H^2(Q,\mathbb T)$. We prove that the associated locally compact quantum groups are 
isomorphic to  cocycle bicrossed product quantum groups associated to a pair of subgroups of 
the dual semidirect product $Q\ltimes\hat V$, both isomorphic to $Q$, and to a pentagonal cocycle $\Theta_\omega$
explicitly given in terms of the group cocycle \nolinebreak[4]$\omega$.

\end{abstract}

\tableofcontents

\section{Introduction}

Although the theory of locally compact quantum groups in the von Neumann algebraic setting \cite{KV1,KV2} has, to a great extent, reached now
a high level of maturity, the  important question of the construction of examples which are neither compact nor discrete is still widely open. 
The main obstacle being that contrarily to the compact case, the algebraic level of Lie bialgebra quantization 
for noncompact 
Lie groups is not enough in the analytic setting. This has been demonstrated some time ago, for instance 
in the work of Koelink and Kustermans \cite{KK} and of Woronowicz \cite{W1,W2}.

Major progress in this direction has been made by De Commer \cite{DC} by putting quantization of locally compact groups into the
analytic framework of quantum groups. More precisely, De Commer  solved positively the   long standing problem
of existence of left- and right-invariant weights for the deformed von Neumann bialgebra $(W^*(G),\Omega\hat\Delta(.)\Omega^*)$.
Here, $G$ is a   locally compact group
and $\Omega\in W^*(G\times G)$ is a unitary convolution operator    satisfying the $2$-cocycle equation
$$
(\Omega\otimes1)(\Dhat\otimes\iota)(\Omega)=(1\otimes\Omega)(\iota\otimes\Dhat)(\Omega).
$$
Such an operator $\Omega$ is called a \emph{dual unitary  $2$-cocycle}, by analogy with  group $2$-cocycles,
which can be defined as  unitary elements of $L^\infty(G\times G)$ satisfying 
$(\omega\otimes1)(\Delta\otimes\iota)(\omega)=(1\otimes\omega)(\iota\otimes\Delta)(\omega)$.

De Commer's construction of  invariant weights for $(W^*(G),\Omega\hat\Delta(.)\Omega^*)$ 
 uses, as a key tool, the  structure  of \emph{$G$-Galois object}, 
an appropriate notion of free and ergodic action of the group $G$
 on a von Neumann algebra, which can be  attached to a dual unitary  $2$-cocycle.
 
However, constructing a dual unitary  $2$-cocycle for a given locally compact group is already a nontrivial task.
This is, in the first place, because of emergence   of the representation theory of $G$ in the picture. For instance, in \cite{BGNT3} it is shown that if 
 the group von Neumann algebra $W^*(G)$ is a type I factor (equivalently, if $G$ possesses a unique class of irreducible 
 and square-integrable representations $[\pi]$), then there exists a (canonical class of) dual unitary  $2$-cocycle on $G$. But  to
 obtain a concrete representative of this class, we need  an explicit unitary equivalence  between the left-regular representation $\lambda$
 and the tensor product representation $\pi\otimes\pi^c$ with the contragredient $\pi^c$. Such an intertwiner is called a \emph{unitary equivariant 
 quantization} of $G$, because it transforms square-integrable functions on $G$ to Hilbert-Schmidt operators on the representation space of $\pi$,
 in a $G$-equivariant way. More generally, let $\pi$ be a \emph{projective representation} of $G$ (always assumed to be
  irreducible and square-integrable) with associated
  $2$-cocycle $\omega\in Z^2(Q,\mathbb T)$.  
  It is also proven in \cite{BGNT3} that if the twisted group von Neumann algebra $W^*(G,\omega)$ is a type I factor and if 
 a unitary equivariant quantization exists, then a dual unitary  $2$-cocycle on $G$  does exist too.  In both cases (projective or genuine representation),
  the main  remaining task is to construct such a quantization map.
  
  For a semidirect product $G=Q\ltimes V$, with $V$ Abelian and with $Q$ possessing a free dual orbit of full measure, it is not
  hard to see that the group von Neumann
  algebra $W^*(G)$ is indeed a type I factor.  One important result of  \cite{BGNT3} is that the Kohn-Nirenberg quantization of the self-dual Abelian
  group $\hat V\times V$ can be used to construct a unitary equivariant quantization of $G$, leading therefore to an explicit dual unitary  $2$-cocycle $\Omega$.
   Moreover,  the deformed quantum group $(W^*(G),\Omega\hat\Delta(.)\Omega^*)$ is isomorphic to a \emph{bicrossed product}
  quantum group \cite{BS1,BSV,VV}, based on a matched pair of subgroups of the dual semidirect product $Q\ltimes \hat V$, both isomorphic to
  $Q$.
  
   In the present paper, we show that if $\omega$ is any nontrivial $2$-cocycle on $Q$, seen in a natural way as a nontrivial $2$-cocycle on $G$, then the 
  twisted group von Neumann algebra $W^*(G,\omega)$ is again a type I factor. Mimicking the  Kohn-Nirenberg quantization from a 
  functional approach, we construct a unitary equivariant quantization of $G$ and compute the associated dual unitary  $2$-cocycle $\Omega_\omega$.
  Our main result is that the deformed quantum group $(W^*(G),\Omega_\omega\hat\Delta(.)\Omega_\omega^*)$ is now isomorphic to a
   \emph{cocycle bicrossed product}
  quantum group \cite{VV}, associated with the same  matched pair and with a nontrivial pentagonal $2$-cocycle  $\Theta_\omega$ (see below for the definitions).
  Moreover, we show that the map $\omega\mapsto\Theta_\omega$ induces a group homomorphism from measurable group cohomology 
  to pentagonal cohomology and that the restriction of this map to continuous group cohomology is  injective.

\section{Generalities}

\subsection{Notations}

We use all the conventions and notations of  \cite{BGNT3}, with the single exception 
that here inner products of complex Hilbert spaces are linear on the right.
Let us recall the most importants.
Given a locally compact group $G$ (always assumed to be second countable), we let $L^p(G)$, $p\in[1,\infty]$, be the $L^p$-space associated with a 
left-invariant Haar measure  $dg$. 
Let ${\rm Aut}(G)$ be the group of continuous automorphisms of $G$. 
 The modulus function $|.|_G:{\rm Aut}(G)\to\R_+^*$ and the  modular function $\Delta_G:G\to\R_+^*$ 
are defined by the relations:
\begin{align*}
 \int_Gf(\vf(g))dg&= |\vf|_G^{-1}\int_G f(g)dg,\quad \forall f \in C_c(G),\; \forall \vf\in {\rm Aut}(G),\\
\int_Gf(gh)dg&= \Delta_G(h)^{-1}\int_G f(g)dg,\quad  \forall f\in C_c(G),\;\forall  h\in G.
\end{align*}
For $f\in L^\infty(G)$, let $\check f\in L^\infty(G)$ be defined by
$$
\check f(g):= f(g^{-1}).
$$
The unitary left- and right-regular representations of $G$ on $L^2(G)$ are respectively defined by
$$
(\lambda_gf)(h):=f(g^{-1}h),\quad (\rho_gf)(h):=\Delta_G(g)^{1/2}\,f(hg)\,,\qquad \forall g,h\in G,\quad \forall f\in L^2(G).
$$
The modular conjugations $J$ of $L^\infty(G)$ and $\hat J$ of $W^*(G):=\lambda(G)''$ are given by
$$
Jf:=\bar f, \quad \hat J f=\Delta_G^{-1/2} \bar{\check f},\quad \forall f\in L^2(G).
$$
We will also consider the unitary operator
$$
\CJ:=\hat J J.
$$
The multiplicative unitaries $W,\hat W:L^2(G\times G)\to L^2(G\times G)$ of $G$ and of its dual  are defined by
$$
(Wf)(g,h):=f(g,g^{-1}h),\quad (\hat Wf)(g,h):=f(hg,h).
$$
Here, we will be interested in semidirect products
$G=Q\ltimes V$, where $V$ is Abelian and $Q$ is a closed subgroup of 
 ${\rm Aut}(V)$. Denoting the action of  ${\rm Aut}(V)$ on $V$ by juxtaposition, 
 the group law of $Q\ltimes V$ is
$$
(q,v)(q',v')=(qq',v+qv'),\quad \forall q,q'\in Q,\;\forall v,v'\in V.
$$
In this case, we have the following formulas for a left-invariant Haar measure and for the modular function on $G$:
$$
d(q,v)=\frac{dqdv}{|q|_V},\quad \Delta_G(q,v)=\frac{\Delta_Q(q)}{|q|_V}.
$$
Let $\hat V$ be the Pontryagin dual of $V$. In order to use additive notations both on $V$ and  $\hat V$, it is convenient 
to denote the duality pairing $\hat V\times V\to \T$ by $e^{i\langle \xi,v\rangle}$, $\xi\in \hat V$, $v\in V$. We do not claim
that there is an exponential function here, it is just a notation. We also set $e^{-i\langle \xi,v\rangle}:=\overline{e^{i\langle \xi,v\rangle}}=e^{i\langle -\xi,v\rangle}
=e^{i\langle \xi,-v\rangle}$. The Haar measure $d\xi$ of $\hat V$ is normalized such that the Fourier transform $\CF_V$:
$$
(\CF_Vf)(\xi):=\int_V e^{-i\langle \xi,v\rangle}\,f(v)\,dv,
$$
becomes unitary from $L^2(V)$ to $L^2(\hat V)$. With this convention, we have the relation 
$$
(\CF_Vf)(\xi)=(\CF_{\hat V}^*f)(-\xi)\,,\quad \forall f\in L^2(V),\quad \forall \xi\in\hat V.
$$ 
We also denote by $\CF_{\hat V}:L^2(G)\to L^2(Q\times \hat V, |q|^{-1}dqd\xi)$ the (unitary) partial Fourier transform.
The dual action of $q\in Q$ on $\xi\in \hat V$, 
denoted by $q^\flat\xi$, is defined by 
$$
e^{i\langle q^\flat \xi,v\rangle}=e^{i\langle \xi,q^{-1}v\rangle}\,,\quad \forall v\in V,\;\forall \xi\in\hat V\,,\forall  q\in Q.
$$
 We also have the following relation between the modulus
functions of $V$ and of $\hat V$: 
$$
|q^\flat|_{\hat V}=|q|_V^{-1},\quad\forall q\in Q.
$$
We will always identify a function $f\in L^\infty(G)$ with the bounded operator of multiplication by this function on $L^2(G)$.

\subsection{Galois objects}
\label{GO}
Originating from Hopf algebras theory, the notion  of Galois object has been  adapted by De Commer \cite{DC} to the operator algebraic setting of 
locally compact quantum groups \cite{KV1,KV2}.
The most important thing to know is that given a  Galois object for a locally compact quantum group, one can construct another one 
 (by reflexion across the Galois object).   
Galois objects for   non-Abelian groups, the only case  considered here,  are already very important since they 
allow to construct 
genuine locally compact quantum groups (i.e$.$ not commutative  nor cocommutative).

Let $G$ be a  locally compact group. A \emph{$G$-Galois object} is a pair $(\N,\beta)$
where $\N$ is a von Neumann algebra and  $\beta:G\to{\rm Aut}(\N) $   is an action satisfying  the following three properties.

\begin{enumerate}
\item \emph{Ergodicity.}  The  invariant vectors  are reduced to the scalars:
$$
\{a\in\N:\beta_g(a)=a,\;\forall g\in G\}=\C\,1_\N.
$$

\item  \emph{Integrability.} The canonical  weight $\vf:\N_+\to [0,+\infty]$, defined by
\begin{align}
\int_G \beta_g(a)dg =\vf(a) \,1_\N,\quad a\in\N_+,
\label{vf-def}
\end{align}
is semi-finite.\\

\item  \emph{Unitarity.} Denoting by $\Lambda: {\mathfrak N}_{\vf}=\{a\in\N:\vf(a^*a)<\infty\}\to
 L^2(\N,\vf)$ the GNS-map of the weight $\vf$,
the isometric Galois map
\begin{align}
\label{GM-def}
{\mathcal G}\colon L^2(\N,\varphi)\otimes L^2(\N,\varphi)\to L^2(G;L^2(\N,\varphi)),\ \ {\mathcal G}\big(\Lambda(a)\otimes \Lambda(b)\big)(g)=
\Lambda(\beta_g(a)b),
\end{align}
is surjective.
\end{enumerate}

The simplest type of $G$-Galois objects occurs when $\N\simeq B(\CH)$, for  $\CH$ a separable Hilbert space.
We  then talk about  \emph{$I$-factorial $G$-Galois objects} and we denote by  ${\rm Gal}_I(G)$ the set (see below) of isomorphism classes of such Galois objects.
 In this situation,
one  necessarily has
 $\beta=\Ad\pi$ where $\pi$ is a unitary, irreducible,  square-integrable and projective
 representation of $G$ on $\CH$. 
Rather than a group homomorphism $G\to PU(\CH)$, 
 here we prefer to see a projective representation directly  as a  Borel map
 $\pi:G\to U(\CH)$ satisfying $\pi(g_1)\pi(g_2)=\omega_\pi(g_1,g_2)\pi(g_1g_2)$, where
 $\omega_\pi$ is a $\T$-valued Borel $2$-cocycle on $G$.
  Note that ergodicity and integrability of the action $\Ad\pi$ are respectively
equivalent  to irreducibility and to square-integrability of the projective representation $\pi$ (see \cite[Remark 2.2]{BGNT3}).
Moreover it follows from \cite[Theorem 2.4]{BGNT3} that the Galois map \eqref{GM-def}
is unitary if and only if  the twisted group von Neumann algebra $W^*(G,\omega_\pi)$ is a type $I$ factor.
More precisely, defining $H^2_{I}(G,\mathbb T)$ to be the set  of   classes $[\omega]\in H^2(G,\mathbb T)$ (in measurable cohomology \cite{Moore}) such that  $W^*(G,\omega)$
is a type $I$ factor, it follows by \cite[Theorem 2.4]{BGNT3} that we have an  isomorphism
\begin{align}
\label{Isom1}
{\rm Gal}_I(G)\;\longrightarrow \;H_I^2(G,\mathbb T),\quad [(B(\CH),\Ad\pi)]\mapsto [\omega_\pi].
\end{align}
Note that  $H^2_{I}(G,\mathbb T)$ needs not to be a subgroup of $H^2(G,\mathbb T)$. Indeed, in the example studied by Jondreville   \cite{J}, we have
 $[1]\not\in H_I^2(G,\mathbb T)\ne\emptyset$. In the present article, we will describe a class of examples where $H^2_{I}(G,\mathbb T)$ contains a subgroup of 
 $H^2(G,\mathbb T)$.

For a $I$-factorial $G$-Galois object $(B(\CH),\Ad\pi)$, the  canonical weight \eqref{vf-def} 
is related to the  Duflo--Moore formal degree operator $D$  of the projective representation $\pi$
 by the formula:
\begin{align}
\label{vf-DM}
\varphi=\Tr(D^{1/2}\cdot D^{1/2}).
\end{align}
This follows because  $D$  is the unique positive, possibly unbounded, nonsingular operator  on
$\CH$  satisfying
$$
\int_G|\langle \zeta,\pi(g)\xi\rangle|^2dg=\|D^{1/2}\xi\|^2\|\zeta\|^2\ \ \text{for all}\ \ \xi\in{\rm Dom}(D^{1/2})\ \ \text{and}\ \ \zeta\in \CH.
$$
See \cite{DM} for genuine representations and \cite{A} for projective representations.

\subsection{Dual cocycles}
In this article, we are  interested in the situation where the 
locally compact quantum group associated with
a $I$-factorial $G$-Galois object $(B(\CH),\Ad\pi)$
can also be described as a dual cocycle deformation of $G$. 
Recall that a \emph{dual  unitary $2$-cocycle}  (aka a non-formal Drinfeld twist) is a unitary element $\Omega\in W^*(G)\bar\otimes W^*(G)$ 
    satisfying the relation:
\begin{equation}
\label{eq:dual-cocycle}
(\Omega\otimes1)(\Dhat\otimes\iota)(\Omega)=(1\otimes\Omega)(\iota\otimes\Dhat)(\Omega).
\end{equation}
The cocycle relation ensures coassociativity  of the deformed coproduct $\hat\Delta_\Omega:=\Omega \hat\Delta(.)\Omega^*$ on $W^*(G)$.
The question of existence 
 of invariant weights for $\hat\Delta_\Omega$ has been solved positively by De Commer
\cite{DC} using  Galois objects. 
The   $G$-Galois object associated to a dual  unitary $2$-cocycle $\Omega$ can be described as follows (see \cite[section 5]{DC} and \cite[section 4]{NT}).
Let $\star_\Omega$ be the associative product on the Fourier algebra $A(G)$ (identified as usual with the predual of $W^*(G)$) given by:
\begin{align}
\label{starOmega}
f_1\star_\Omega f_2(g):=(f_1\otimes f_2)\big(\hat\Delta(\lambda_g)\Omega^*\big),\qquad \forall g\in G.
\end{align}
Consider the representation of the  Banach algebra $(A(G),\star_\Omega)$ on $L^2(G)$  given by:
\begin{align}
\label{piOmega}
\pi _\Omega:A(G)\to B(L^2(G)),\quad f\mapsto (f\otimes\iota)(\hat W\Omega^*).
\end{align}
This representation intertwines the adjoint of the right-regular representation $\Ad\rho$, with the left-regular representation $\lambda$:
$$
\Ad\rho_g \big(\pi _\Omega(f)\big)=\pi _\Omega(\lambda_g f),\quad\forall f\in A(G),\quad\forall g\in G.
$$
Set $W^*(\hat G, \Omega):=\pi_\Omega(A(G))''$. This notation  justifies itself because for the trivial cocycle $\Omega=1$, we have $\pi _1(f)=\check f$
and thus $W^*(\hat G,1)=L^\infty(G)$.
 The $G$-Galois object associated with a dual  $2$-cocycle $\Omega$ is then given by 
$(W^*(\hat G, \Omega),\Ad\rho)$.
The canonical weight of this Galois object  reads:
\begin{align}
\label{TVF}
\vf\big(\pi _\Omega(f)^*\pi _\Omega(f)\big)=\|\check f\|_2^2\quad \mbox{for all} \quad f\in A(G) \quad \mbox{such that}\quad \check f\in L^2(G),
\end{align}
and the GNS space of  $\vf$ can be identified with $L^2(G)$ with associated GNS map
$\Lambda:\mathfrak N_{\vf}\to L^2(G)$  uniquely determined by:
\begin{align}
\label{TL}
\Lambda(\pi _\Omega(f))=\check f\quad \mbox{for all} \quad f\in A(G) \quad \mbox{such that}\quad \check f\in L^2(G).
\end{align}
It follows from \cite[Proposition 5.1]{DC} that under the identification $L^2(\N,\vf)\simeq L^2(G)$, the isometric Galois map 
of the $G$-Galois object $(W^*(\hat G, \Omega),\Ad\rho)$
is given by
$$
\mathcal G=\hat W\Omega^*.
$$
 Therefore the Galois map is unitary. Another very important formula for us, proven in \cite[Proposition 5.4]{DC}, concerns the multiplicative 
 unitary of the deformed quantum group
 $(W^*(G),\hat\Delta_\Omega)$:
\begin{align}
\label{DMU}
\hat W_{\Omega}
=(\tilde J\otimes\hat J)\,\Omega\,\hat W^*\,(J\otimes\hat J)\,\Omega^*.
\end{align} 
In this formula, $\tilde J$ denotes the modular involution of the canonical weight \eqref{TVF}. 

Recall that two dual unitary $2$-cocycles $\Omega$ and $\Omega'$ are said to be cohomologous if there exists a unitary $u\in W^*(G)$
such that $\Omega'=(u\otimes u)\Omega\hat\Delta(u)^*$. The set of equivalence classes of dual $2$-cocycles is denoted by
$H^2(\hat G,\mathbb T)$ and the subset of classes $[\Omega]$  such that
$W^*(\hat G, \Omega)$ is a type $I$ factor is denoted by $H^2_I(\hat G,\mathbb T)$. Hence, if $[\Omega]\in H^2_I(\hat G,\mathbb T)$,  
there exists  a unitary irreducible and square-integrable projective representation $\pi$ on $\CH$, such that
under the identification $W^*(\hat G,\Omega)\simeq B(\CH)$,
the action $\Ad\rho$ becomes $\Ad\pi$. Let also
 $\omega_\Omega\in Z^2(G,\mathbb T)$ be the group $2$-cocycle associated with this projective representation.
 Since cohomologous dual $2$-cocycles yield unitarily equivalent $G$-Galois objects, the isomorphism \eqref{Isom1} yields an embedding: 
\begin{align}
\label{Emb1}
H^2_I(\hat G,\mathbb T)\longrightarrow H^2_I(G,\mathbb T),\quad [\Omega] \mapsto [\omega_\Omega].
\end{align}

\subsection{Equivariant quantizations}
\label{EQ}
Our goal  is to go in the direction  opposite  to the one of the embedding \eqref{Emb1}. Namely,   given a projective representation $\pi$ such that
the twisted group von Neumann algebra $W^*(G,\omega_\pi)$ is a type $I$ factor, we aim to construct a dual $2$-cocycle $\Omega$ such that the 
locally compact quantum group associated with the $G$-Galois object $(B(\CH),\Ad\pi)$ is isomorphic to the dual cocycle
deformation  $(W^*(G),\hat\Delta_\Omega)$.

By  \cite[Proposition 2.9]{BGNT3},  we know that this situation occurs precisely  when the left regular representation $\lambda$ is unitarily
equivalent to $\pi\otimes\pi^c$ or,  equivalently, when there exists a unitary map $\Op:L^2(G)\to {\HS}(\CH)$,
 \emph{that we call a unitary equivariant quantization map},
such that
$$
\Ad\pi(g)\big(\Op(f)\big) =\Op(\lambda_gf),\quad\forall f\in L^2(G),\quad\forall g\in G.
$$
Transporting everything with  $\Op$, we can take $L^2(G)$ for  the GNS-space of the canonical weight  $\varphi$
of the $G$-Galois object $(B(\CH),{\rm Ad} \pi)$. With $D$ the Duflo-Moore operator of $\pi$,
 the associated  GNS map $\tilde\Lambda:\mathfrak N_\vf\to L^2(G)$ is  uniquely determined by
\begin{equation}\label{eq:GNS}
\tilde\Lambda(\Op(f)D^{-1/2}):=f\ \ \text{for}\ \ f\in L^2(G)\ \ \text{such that}\ \  \Op(f)D^{-1/2}\in B(\CH),
\end{equation}
and the corresponding Galois map $\tilde\G\colon L^2(G\times G)\to L^2(G\times G)$ reads:
\begin{equation}\label{eq:Galois}
\tilde\G(f_1\otimes f_2)(g,h)=\tilde\Lambda\big(\big(\Ad\pi(g)\big)(\Op(f_1)D^{-1/2})\Op(f_2)D^{-1/2}\big)(h).
\end{equation}
Setting $\CJ:=J\hat J$, we know by  \cite[Proposition 2.9]{BGNT3} that the following unitary operator on $L^2(G)$ defines a dual unitary   $2$-cocycle on $G$:
\begin{equation}
\label{eq:Omega}
\Omega:=(\CJ\otimes \CJ)\tilde\G^*(1\otimes \CJ)\hat W,
\end{equation}
and that the $G$-Galois objects $(B(\CH),\Ad\pi)$ and  $(W^*(\hat G,\Omega),\Ad\rho)$ are indeed isomorphic.

Observe that $H^2_I(G,\mathbb T)$ may or may not contain the trivial class $[1]\in H^2(G,\mathbb T)$.
This happens   exactly when $W^*(G)$ is a type $I$ factor (see \cite[Theorem 2.13]{BGNT3}).
In this case, there is a unique  (class of)
irreducible and square-integrable   (genuine) representation $\pi$ and   a unitary equivariant quantization map always exists.
But the   proof of  existence of the quantization map is not  constructive at all 
and  the question of the explicit construction of the dual $2$-cocycle \eqref{eq:Omega} 
remains an open problem. This is what had been undertaken in \cite{BGNT3}
for a class of semidirect products. 

Moreover,  associated to a nontrivial class in $ H^2_I(G,\mathbb T)$, 
we don't even know if a unitary equivariant quantization map always exists. The main objective of the present
article is to construct such quantization maps  for projective representations of the class of semidirect products
considered in \cite{BGNT3}.

Assume that we are given a projective representation $\pi$ such that $[\omega_\pi]\in H^2_I(G,\mathbb T)$ together with a unitary equivariant quantization map
$\Op:L^2(G)\to\HS(\CH)$. 
An important step for us is to determine an explicit isomorphism between the $G$-Galois objets $(B(\CH),\Ad\pi)$ and  $(W^*(\hat G,\Omega),\Ad\rho)$
and to this aim it is convenient to consider a third equivalent $G$-Galois object.  So, define the associative product $\star$ on $L^2(G)$ 
by transporting the product of Hilbert-Schmidt operators $\HS(\CH)$ via the quantization map:
\begin{align}
\label{star-prod}
f_1\star f_2:=\Op^*\big(\!\Op(f_1)\,\Op(f_2)\big).
\end{align}
For $f\in L^2(G)$, we denote by $L^\star(f)$ the bounded operator on $L^2(G)$ of left-$\star$-multiplication:
$$
L^\star(f_1)f_2:=f_1\star f_2.
$$
Then   $(L^\star(L^2(G))'',\Ad\lambda)$ defines  a $G$-Galois object canonically isomorphic to $(B(\CH),\Ad\pi)$. Explicitly, the isomorphism
reads $L^\star(f)\mapsto \Op(f)$, whenever $f\in L^2(G)$. As observed in \cite[Remark 2.11]{BGNT3}, the product  $\star$ does not necessarily
coincide on $A(G)\cap L^2(G)$ with the product $\star_\Omega$ defined in \eqref{starOmega}. 
But when they do coincide, then the representation $\pi_\Omega$ of the Banach algebra
$(A(G),\star_\Omega)$ given in \eqref{piOmega} is related  to the representation $L^\star$ of the Hilbert algebra $(L^2(G),\star)$. Namely,
 for $f\in A(G)\cap L^2(G)$    we have the equality (on some
domain--see the discussion at the beginning of Section 3.3 in \cite{BGNT3}):
$$
\pi_\Omega(f)=\CJ \Delta_G^{-1/2} L^\star(f)\Delta_G^{1/2} \CJ.
$$
Therefore, to find the explicit isomorphism between  the $G$-Galois objects $(B(\CH),\Ad\pi)$ and  $(W^*(\hat G,\Omega),\Ad\rho)$, 
it suffices to find a connection between the operators $L^\star (f)$ and $\Delta_G^{\pm 1/2}$.

\subsection{Cocycle bicrossed products and pentagonal cohomology}
\label{CBPPC}

Let $(G_1,G_2; G)$ be a \emph{matched pair}. This means that $G_1$ and $G_2$ are closed subgroups of 
a locally compact group $G$,
such that $G_1\cap G_2=\{e\}$  and such that $G_1G_2$ is of full Haar measure in $G$. 
This datum allows  to define a locally compact quantum group, called the \emph{bicrossed product} \cite{BS1,BSV,VV}. 
We are not going to describe this quantum group, but just its multiplicative unitary. 

Consider the measurable maps  $p_j:G\to G_j$, $j=1,2$,    defined for almost all $x\in G$ by the relation:
$$
p_1(x)p_2(x)=x.
$$
The multiplicative unitary of the  bicrossed product associated to the matched pair $(G_1,G_2; G)$ is the unitary operator $W$ on $L^2(G\times G)$
given by 
\begin{align}
\label{WMP}
W\xi(x,y):=d_w^{1/2}(x,y)\,(\xi\circ w)(x,y),
\end{align}
where $w:G\times G\to G\times G$ is  the pentagonal transformation given by
$$
w(x,y)=\big(xp_1(p_2(x)^{-1}y), p_2(x)^{-1}y\big),
$$
and $d_w$ is the Radon-Nikodym derivative of $w$. Recall that a
 \emph{pentagonal transformation} is a measure class isomorphism $w:X\times X\to X\times X$, where $X$ is
a standard measure space, satisfying the pentagonal relation $ w_{23}\circ w_{13}\circ w_{12}=w_{12}\circ w_{23}$.

A very important result due to Baaj and Skandalis \cite{BS3} (see \cite[Proposition 5.1]{BSV} for the final version of the statement)
is that a pentagonal transformation is  always associated to a matched pair.
 More precisely, given a pentagonal transformation $w$ on $ X$,
let $w_j:X\times X\to X$, $j=1,2$, be such that $w(x,y)=(w_1(x,y),w_2(x,y))$. Assuming that $(w_1, {\rm id})$ and $({\rm id},w_2)$
are measure class isomorphisms too, then there exist a matched pair $(G_1,G_2;G)$, two commuting actions  of the groups $G_1$ (on the right) and  $G_2$
(on the left) on the space $X$, and a measurable $G_1\times G_2$-equivariant  map $f:X\to G$ such that we have almost everywhere:
 $$
 w(x,y)=\big(x.p_1(p_2(f(x))^{-1}.y), p_2(f(x))^{-1}.y\big).
 $$

Associated to a pentagonal transformation $w$
satisfying the extra conditions displayed above or, equivalently, associated to a matched pair $(G_1,G_2;G)$,
there is a cohomology theory, called the measurable Kac cohomology \cite{BSV2}, which in degree 2 is very easy to describe. 
To a  measurable function $\Theta:X\times X\to \T$, we can associate the unitary operator $W_\Theta$ on $L^2(X\times X)$, given by
$$
W_\Theta\xi(x,y):=\Theta(x,y)\,d_w^{1/2}(x,y)\,(\xi\circ w)(x,y).
$$
Such a function $\Theta$ is called a pentagonal $2$-cocycle if $W_\Theta$ still satisfies the pentagonal equation. Moreover, there is  a locally compact 
quantum group, called the \emph{cocycle bicrossed product}, whose multiplicative unitary is $W_\Theta$ \cite{VV}.

The set of  pentagonal $2$-cocycles
forms an Abelian group under pointwise multiplication. Given a measurable function $a:X\to\T$, the function $\Theta_a:=\frac{a\otimes a}{(a\otimes a)\circ w}$
is always a pentagonal $2$-cocycle. These cocycles are considered as trivial since then $W_{\Theta_a}=(a\otimes a) W(a^*\otimes a^*)$.
The \emph{pentagonal $2$-cohomology} of the pentagonal transformation $w$ is defined to be the group of pentagonal $2$-cocycles
divided by the subgroup of trivial $2$-cocycles. We denote this quotient group by $H^2(w,\T)$.

\subsection{The class of groups}
Consider a semidirect product $G=Q\ltimes V$, where $V$ is a (nontrivial) locally compact
Abelian group and $Q$ is a closed subgroup of ${\rm Aut}(V)$. We further assume that the  group
$Q\ltimes V$ satisfies the following properties:
\begin{assumption}
$(1)$ There exists an element $\xi_0\in \hat V$ such that the map
\begin{align}
\label{map:phi}
\phi:Q\to \hat V,\quad q\mapsto q^\flat \xi_0,
\end{align}
is a measure class isomorphism.\\
$(2)$ The group $Q$ has non trivial (measurable) cohomology in degree $2$.
\end{assumption}

In \cite{BGNT3}, condition $(1)$ was called \emph{the dual orbit condition} and
a large  class of examples was given when $V$ is a finite dimensional vector space over a local field.
Recall that a local field $\K$ is a non-discrete locally compact topological field. Local fields
are completely classified: a commutative local field can be either $\R$, $\C$ (Archimedean case), 
or a finite degree extension of $\mathbb Q_p$, or $\mathbb F_q((X))$ (non-Archimedean case)
and a skew local field is a finite dimensional division algebra over a commutative local field. 
For example, one can take:
$$
V=\operatorname{Mat}_n(\mathbb K)\quad \mbox{and} \quad  Q=\operatorname{GL}_n(\mathbb K),
$$
which, for $n=1$, gives  the affine group $\mathbb K^*\ltimes\mathbb K$. 
 \begin{remark} 
Condition $(1)$  implies  the following change of variable formula:
 \begin{align}
 \label{CVF}
  \int_Q f(\phi(q))\frac{dq }{|q|_V}=\int_{\hat V} f(\xi)\,d\xi ,\quad \forall f\in L^1(\hat V).
\end{align}
\end{remark}

We now turn to examples where condition $(2)$ is satisfied. 
\begin{example}
Assume that $Q=(A\times\hat A)\ltimes B$, where $A$ is Abelian and $B$ is arbitrary.
In this case  the inflation homomorphism $H^2(A\times\hat A,\T)\to H^2(Q,\T)$ associated to the projection 
$Q\to A\times\hat A$ is injective. Therefore, one may see the Heisenberg skew-bicharacter $\omega_H$ on $A\times\hat A$ as a 
nontrivial $2$-cocycle on $Q$. 
 Recall that $\omega_H$   is defined  for $q_j=(a_j,\eta_j)\in A\times \hat A$, $j=1,2$, by:
$$
\omega_H(q_1,q_2):=e^{i\langle \eta_1,a_2\rangle}e^{-i\langle \eta_2,a_1\rangle}.
$$
This situation covers the case where $Q=\R^*\times\R^*$, $V=\R\times \R$ and $Q=\C^*\times\C^*$, $V=\C\times \C$.
Indeed, in both cases one has $Q\simeq\R^2\times K$, where $K$ is compact.
\end{example}

\begin{example}
In the previous  example, we cannot  avoid taking two copies of the base field, since $H^2(\R^*,\T)=H^2(\C^*,\T)=\{1\}$.
But this is not the case for a commutative non-Archimedean  local field. Indeed, it is proven in \cite[Lemma 4.1]{Moore2}
that when $\K$ is a finite degree extension of $\mathbb Q_p$, or $\K=\mathbb F_q((X))$, we have
$$
H^2(\K^*,\T)=\hat R\times\hat U_1\times H^2(U_1,\T).
$$
Here $U_1$ is the group of principal units of $\K$ and $R$ is the group of roots of unity in $\K$ of order prime to the characteristic of the residue field.
Therefore, the affine groups  $G=\K^*\ltimes \K$ of the  commutative and non-Archimedean  local fields give nontrivial examples where condition $(2)$ is satisfied.
\end{example}

\subsection{Kohn-Nirenberg quantization for genuine representations}

From Mackey theory, we    know that $\pi:={\rm Ind}_V^G(\xi_0)$ is the unique square-integrable irreducible unitary representation of $G=Q\ltimes V$.
 Realized
on $\CH:= L^2(Q,|q|_V^{-1}dq )$, we have:
\begin{align}
\label{def:pi}
\pi(q,v)\vf(\tilde q)=|q|^{1/2}_V\,e^{-i\langle \tilde q^\flat\xi_0,v\rangle}\,\vf(q^{-1}\tilde q),\quad \forall (q,v)\in G,\;\forall \vf\in\CH.
\end{align}
This representation is unitarily equivalent to the canonical unitary representation on $L^2(V)$ considered in \cite{BGNT3}.  (This formula 
appears in the proof of \cite[Lemma 3.4]{BGNT3} without the factor $|q|^{1/2}_V$. This is because  there the representation ${\rm Ind}_V^G(\xi_0)$
is
realized on $L^2(Q)$, not on $L^2(Q,|q|_V^{-1}dq)$.)

We now pass in review what has been proven in \cite{BGNT3}. First,  under our choice of realization of $\pi$, the (unitary and $G$-equivariant) quantization map
$\Op:L^2(G)\to {\rm HS}(\CH)$ is given by:
\begin{align}
\label{OP}
\Op(f)\zeta(q_0)=\int_Q
(\CF_{\hat V}^*f)(q_0, q^\flat\xi_0- q_0^\flat\xi_0)\,\zeta(q)\,\frac{dq}{|q|_V},\quad \forall f\in L^2(G),\,\forall \zeta\in\CH.
\end{align}
It is called in \cite{BGNT3} the Kohn-Nirenberg quantization of $G$ because under the measure class isomorphism $Q\ltimes V\simeq \hat V\times V$,
$(q,v)\mapsto(\phi(q),v)$,
this is the true Kohn-Nirenberg quantization.

The associated dual unitary  $2$-cocycle $\Omega\in W^*(G\times G)$ has a very simple
(weak) integral representation (see \cite[Equation (3.17) \& Lemma 3.10]{BGNT3}):
$$
\Omega=\int_G e^{-i\langle q^\flat\xi_0-\xi_0,v\rangle}\,\lambda_{(1,v)^{-1}}\otimes\lambda_{(q,0)^{-1}}\, \frac{dqdv}{|q|_V}.
$$
In fact, in \cite{BGNT3},  this expression is not derived from the formula \eqref{eq:Omega} but from a somehow more direct approach.
We will see in Subsection \ref{TSP} that
both constructions give exactly the same dual $2$-cocycle. One important benefit of this formula
is that it 
shows in the case where $G=\R^*\times \R$, that the   locally compact quantum group $(W^*(G),\hat\Delta_\Omega)$
is isomorphic to the one constructed by Stachura \cite{Stachura} using groupoid $C^*$-algebras methods (see \cite[Section 3.6]{BGNT3}).

In order to understand better the  locally compact quantum group $(W^*(G),\hat\Delta_\Omega)$, it is convenient to compute its multiplicative
unitary \eqref{DMU}. For this one needs $\tilde J$, the modular involution of the canonical weight \eqref{TVF}. After a rather tedious analysis of the (isomorphic)
$G$-Galois object $(L^\star(L^2(G))'',\Ad\lambda)$, the following formula was proven in \cite[Proposition 3.24 \& Lemma 3.25]{BGNT3}:
$$
\tilde J J= \int_G e^{i\langle q^\flat\xi_0-\xi_0,v\rangle}\, \Delta_G(q,v)^{-1/2}\,\lambda_{(q,v)}\,\frac{dqdv}{|q|_V},
$$
where the integral has to be understood in the weak sense.

It turns out that, up to  conjugation by the
 partial Fourier transform $\CF_V:L^2(G)\to L^2(Q\times\hat V,|q|_V^{-1}dqd\xi)$, the multiplicative unitary 
$\hat W_{\Omega}$ of the deformed quantum group $(W^*(G),\hat\Delta_\Omega)$ is associated to the following pentagonal transformation  on $Q\times\hat V$
(see \cite[Theorem 3.26]{BGNT3}):
\begin{multline}
\label{PTO}
w(q_1,\xi_1;q_2,\xi_2):=\\
\big(q_2q_1,q_2^\flat\xi_1;\phi^{-1}({q_2^{-1}}^\flat\xi_0+\xi_1)^{-1}\phi^{-1}(\xi_0+\xi_1),{\phi^{-1}({q_2^{-1}}^\flat\xi_0+\xi_1)^{-1}}^\flat
({q_2^{-1}}^\flat\xi_2-\xi_1)\big).
\end{multline}
It is easy to see that this pentagonal transformation satisfies the extra conditions displayed in subsection \ref{CBPPC}. Therefore,
we can apply \cite[Proposition 5.1]{BSV} to get that $(W^*(G),\hat\Delta_\Omega)$  is isomorphic to a bicrossed product quantum group.
It follows from \cite[Theorem 4.1]{BGNT3} that the associated  match pair $(G_1,G_2)$ consists in the two subgroups of 
the dual semidirect product $Q\ltimes\hat V$ given by
$$
G_1=Q,\quad G_2=(1,\xi_0) Q (1,\xi_0)^{-1}.
$$
For $G=\R^*\ltimes \R$, this  shows that the quantum $ax+b$ groups of Stachura and of Baaj-Skandalis are indeed isomorphic. 
Moreover, it entails that the dual-cocycle deformed quantum group $(W^*(G),\hat\Delta_\Omega)$ is self-dual, noncompact, 
nondiscrete, nonunimodular (if the group $G$ is nonunimodular) and with nontrivial scaling group and scaling constant $1$.

\section{Kohn-Nirenberg quantization for projective representations}

\subsection{A class of projective representations}

In order to  incorporate a nontrivial projective representation in the construction of \cite{BGNT3}, one starts with the observation
that the restriction to $Q$ of the representation  \eqref{def:pi} of $G=Q\ltimes V$  coincides with the left regular representation (up to an obvious 
unitary equivalence coming from our choice of measure). It is therefore natural
to modify the representation $\pi$ by inserting a $2$-cocycle $\omega\in Z^2(Q,\T)$ in such a way that the restriction to $Q$
of the new representation $\pi_\omega$   is (equivalent to) the $\omega$-twisted regular representation on $\CH:=L^2(Q,|q|_V^{-1}dq)$:
\begin{align}
\label{piomega}
\pi_\omega(q,v)\zeta( \tilde q):
=|q|^{1/2}_V\,e^{-i\langle \tilde q^\flat\xi_0,v\rangle}\,\omega(q, q^{-1}\tilde q)\,\zeta(q^{-1}\tilde q),\quad \forall(q,v)\in G,\quad 
\forall \tilde q\in Q, \quad \forall\zeta\in\CH.
\end{align}
The cocycle relation 
\begin{align}
\label{CE}
\omega(q_1,q_2)\,\omega(q_1q_2,q_3)=\omega(q_2,q_3)\,\omega(q_1,q_2q_3), \quad \forall q_1,q_2,q_3\in Q,
\end{align}
of the Borel function $\omega: Q\times Q\to \mathbb T$,  implies that $\pi_\omega$ is indeed a projective representation:
$$
\pi_\omega(g)\,\pi_\omega(g')=\omega(q,q')\,\pi_\omega(gg'),
\quad\forall g=(q,v),g'=(q',v')\in G.
$$
Since the inflation homomorphism 
\begin{align}
\label{HQHG}
H^2(Q,\T)\to H^2(G,\T),
\end{align}
associated with the
projection $G\to Q$, $(q,v)\mapsto q$, is injective, we will freely see a nontrivial $2$-cocycle $\omega$ on $Q$ as a nontrivial $2$-cocycle
on $G$.

We shall not  assume that the cocycle $\omega$ is normalized (in the sense that
$\omega(q,q^{-1})=1$ for all $q\in Q$). Nevertheless 
 we will make the absolutely irrelevant normalization:
$$
\omega(e,e)=1.
$$ 
Note that the cocycle relations for $(e,e,q)$ and $(q,e,e)$ imply then:
\begin{align}
\label{NOR}
\omega(q,e)=\omega(e,q)=1,\quad\forall q\in Q.
\end{align}

We first show that the action $\Ad\pi_\omega$ on $B(\CH)$ is ergodic and integrable (see the discussion in Subsection \ref{GO} or \cite[Remark 2.2]{BGNT3}): 
\begin{lemma}
\label{DMomega}
The projective representation $\pi_\omega$   is irreducible and square-integrable.
 The Duflo-Moore operator  is the operator of multiplication by the function
$\Delta_G^{-1}\big|_{Q}$.
\end{lemma}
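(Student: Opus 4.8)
The plan is to establish the three assertions by reducing, wherever possible, to the genuine case $\omega=1$: since $|\omega|=1$ pointwise, the cocycle cancels in every modulus computation and in the Schur-type argument, so it affects neither irreducibility nor the Duflo--Moore operator.

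\textbf{Irreducibility.} First I would invoke Schur's lemma. Restricting $\pi_\omega$ to the subgroup $V$, the normalization \eqref{NOR} gives $\pi_\omega(e,v)\zeta(\tilde q)=e^{-i\langle\tilde q^\flat\xi_0,v\rangle}\zeta(\tilde q)$, so $\pi_\omega(e,V)$ acts by the multiplication operators with symbols $\tilde q\mapsto e^{-i\langle\tilde q^\flat\xi_0,v\rangle}$. By the dual orbit condition the map $\phi\colon q\mapsto q^\flat\xi_0$ is a measure class isomorphism onto $\hat V$, so these symbols separate points of $Q$ almost everywhere and generate a maximal abelian subalgebra of $B(\CH)$; hence any $T$ in the commutant of $\pi_\omega$ is a multiplication operator $T=M_m$. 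Imposing that $M_m$ commute with $\pi_\omega(q,0)$---where the factor $\omega(q,q^{-1}\tilde q)$ occurs on both sides and cancels---forces $m(\tilde q)=m(q^{-1}\tilde q)$ for all $q$ and almost every $\tilde q$, i.e.\ $m$ is left-$Q$-invariant and therefore almost everywhere constant. Thus the commutant is $\C\,\id$ and $\pi_\omega$ is irreducible.

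\textbf{Square-integrability and the Duflo--Moore operator.} The core is the direct evaluation of $\int_G|\langle\zeta,\pi_\omega(g)\xi\rangle|^2\,dg$, which I would compute by integrating first over $V$ and then over $Q$. For fixed $q$, the map $v\mapsto\langle\zeta,\pi_\omega(q,v)\xi\rangle$ is, after the change of variables $\eta=\phi(\tilde q)=\tilde q^\flat\xi_0$ governed by \eqref{CVF}, a Fourier transform in $v$ of the function $\eta\mapsto\overline{\zeta(\phi^{-1}\eta)}\,|q|_V^{1/2}\,\omega\big(q,q^{-1}\phi^{-1}\eta\big)\,\xi\big(q^{-1}\phi^{-1}\eta\big)$ on $\hat V$. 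By Plancherel (unitarity of $\CF_{\hat V}$) and a second use of \eqref{CVF}, the $v$-integral equals $\int_Q|\zeta(\tilde q)|^2\,|q|_V\,|\xi(q^{-1}\tilde q)|^2\,|\tilde q|_V^{-1}\,d\tilde q$, the cocycle having disappeared since $|\omega|=1$. Integrating over $q$ against $|q|_V^{-1}dq$ cancels the factor $|q|_V$; the resulting inner integral $\int_Q|\xi(q^{-1}\tilde q)|^2\,dq$ is independent of $\tilde q$ by left-$Q$-invariance and equals $\int_Q|\xi(s)|^2\,\Delta_Q(s)^{-1}\,ds$ by the Haar inversion formula. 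Assembling the pieces gives
\[
\int_G|\langle\zeta,\pi_\omega(g)\xi\rangle|^2\,dg=\Big(\int_Q|\xi(s)|^2\,\Delta_Q(s)^{-1}\,ds\Big)\,\|\zeta\|^2 .
\]
Since $\Delta_G^{-1}\big|_Q(s)=|s|_V\,\Delta_Q(s)^{-1}$, the measure $|s|_V^{-1}ds$ of $\CH$ absorbs the extra $|s|_V$ and the first factor is exactly $\|D^{1/2}\xi\|^2$ for $D$ the (positive, nonsingular) multiplication operator by $\Delta_G^{-1}\big|_Q$. The integral is therefore finite for every $\zeta\in\CH$ and every $\xi$ in the dense domain $\operatorname{Dom}(D^{1/2})$, which yields square-integrability, and matching with the characterization of the Duflo--Moore operator recalled in Subsection~\ref{GO} together with its uniqueness identifies it as $D$.

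\textbf{Main obstacle.} The only delicate points are bookkeeping: getting the Haar-measure and modular-function substitutions on $Q$ exactly right in the inner integral, and confirming that the $v$-integration is a genuine Plancherel identity. This is precisely where the dual orbit condition and the change-of-variables formula \eqref{CVF} are indispensable, converting integrals over $Q$ into integrals over $\hat V$ and back. Beyond this I expect no conceptual difficulty, since the projective cocycle enters no modulus and the computation is structurally identical to the genuine case treated in \cite{BGNT3}.
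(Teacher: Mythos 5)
Your proposal is correct and follows essentially the same route as the paper: both arguments rest on the observation that $|\omega|=1$, so the cocycle drops out of the commutant analysis (restriction to $V$ generates a maximal abelian algebra via the dual orbit condition) and out of the Plancherel computation in the $v$-variable. The only difference is one of packaging: the paper concludes by reducing to the genuine representation $\pi$, quoting its irreducibility and \cite[Lemma 3.4]{BGNT3} for the explicit Duflo--Moore operator, whereas you inline those two verifications (the left-$Q$-invariance of the commutant symbol, and the direct evaluation of the orbital integral as $\bigl(\int_Q|\xi(s)|^2\Delta_Q(s)^{-1}ds\bigr)\|\zeta\|^2$), which is equally valid.
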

\bp
Note that $\pi_\omega(1,v)=\pi(1,v)$ is the operator of multiplication by the function $[q\mapsto e^{-i\langle \tilde q^\flat\xi_0,v\rangle}]$.
Therefore, under the identification $Q\simeq \hat V$,  we see by Fourier transform
that  a bounded operator  which commutes with the restriction of $\pi_\omega$
to $V$, must be an operator of multiplication by a function. 
Since $\pi_\omega(g)=\pi(g)\,\omega(q,.)$,
we see that this operator (of multiplication by a function) also commutes 
with the genuine representation $\pi$. Hence, this function must be constant since $\pi$ is irreducible. Therefore, $\pi_\omega$ is irreducible too.

Concerning square-integrability, observe that for $\zeta_1,\zeta_2\in C_c(Q)$, we have:
\begin{align*}
\langle\zeta_1,\pi_\omega(g)\zeta_2\rangle&=|q|^{1/2}_V\int_Q\overline{\zeta_1}(q_0)
\,e^{-i\langle q_0^\flat\xi_0,v\rangle}\,\omega(q, q^{-1}q_0)\,\zeta_2(q^{-1}q_0)\,\frac{dq_0  }{|q_0|_V}\\
& =|q|^{1/2}_V\int_{\hat V}\overline{\zeta_1}(\phi^{-1}(\xi))
\,e^{-i\langle\xi,v\rangle}\,\omega(q, q^{-1}\phi^{-1}(\xi))\,\zeta_2(q^{-1}\phi^{-1}(\xi))\, d\xi  .
\end{align*}
Therefore, we have $\langle\zeta_1,\pi_\omega(g)\zeta_2\rangle=\CF_{\hat V} f_{q,\omega}(v)$, where we have defined:
$$
f_{q,\omega}(\xi):=|q|^{1/2}_V\,\overline{\zeta_1}(\phi^{-1}(\xi))
\,\omega(q, q^{-1}\phi^{-1}(\xi))\,\zeta_2(q^{-1}\phi^{-1}(\xi)).
$$
Since $|f_{q,\omega}|=|f_{q,1}|$ we deduce from the Plancherel formula for $V$ that
$$
\int_G\big|\langle\zeta_1,\pi_\omega(g)\zeta_2\rangle\big|^2dg =
\int_G\big|\langle\zeta_1,\pi(g)\zeta_2\rangle\big|^2dg .
$$
This shows that the Duflo-Moore operators of $\pi_\omega$ and  $\pi$ are equal. The expression for
this operator follows from \cite[Lemma 3.4]{BGNT3}.
\ep

From Lemma \ref{DMomega} and equation  \eqref{vf-DM}, we deduce that  the canonical
 weights $\vf_\omega$ and $\vf$ associated with the actions $\Ad\pi_\omega$ and $\Ad\pi$  on $B(\CH)$ are equal. 
 In particular, this implies that the associated Galois maps
$\G_\omega$ and $\G$  operate on the same GNS space. 
We will take benefits of this fact in our next result, where we prove that $( B(\CH),\Ad\pi_\omega)$ is indeed a $G$-Galois object.
By \cite[Theorem 2.4]{BGNT3}, this  will imply that the twisted group von Neumann algebra $W^*(G,\omega)$ is a type $I$ factor and thus 
the map \eqref{HQHG} will therefore define an injective group homomorphism:
$$
H^2(Q,\T)\to H^2_{I}(G,\mathbb T).
$$

\begin{proposition}
\label{TI}
The  Galois map $\mathcal G_\omega$ associated with the action $\Ad\pi_\omega$   on $B(\CH)$ is unitary.
\end{proposition}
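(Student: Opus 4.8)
The plan is to reduce the unitarity of $\mathcal G_\omega$ to a completeness statement for the matrix coefficients of $\pi_\omega$, and then to transfer that completeness from the genuine representation $\pi$ by means of a single unimodular multiplier. First I note that $\mathcal G_\omega$ is automatically isometric: this follows from the defining property \eqref{vf-def} of the canonical weight, since $\int_G\beta_g(c)\,dg=\varphi(c)\,1_\N$ yields $\langle\mathcal G_\omega(\Lambda a\otimes\Lambda b),\mathcal G_\omega(\Lambda a'\otimes\Lambda b')\rangle=\varphi(a'^*a)\,\varphi(b'^*b)$. Hence it suffices to prove that $\mathcal G_\omega$ is surjective. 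By \cite[Proposition 2.9]{BGNT3} this amounts to producing a unitary $G$-equivariant quantization for $\pi_\omega$, i.e.\ a unitary intertwiner between $\lambda$ and $\pi_\omega\otimes\pi_\omega^c=\Ad\pi_\omega$; equivalently, to showing that the matrix coefficients of $\pi_\omega$ span a dense subspace of $L^2(G)$. By Lemma \ref{DMomega}, $\pi_\omega$ and $\pi$ have the same Duflo--Moore operator $D$, so the coefficient map built from $\pi_\omega$ and $D^{-1/2}$ obeys the same orthogonality relations as for $\pi$ and is isometric; only its density is at stake.

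The key computation is already present in the proof of Lemma \ref{DMomega}. For $\zeta_1,\zeta_2\in C_c(Q)$ the $v$-Fourier transform of the coefficient $\langle\zeta_1,\pi_\omega(q,v)\zeta_2\rangle$ is
\[
f_{q,\omega}(\xi)=m(q,\xi)\,f_{q,1}(\xi),\qquad m(q,\xi):=\omega\!\left(q,q^{-1}\phi^{-1}(\xi)\right),
\]
where $f_{q,1}$ is the corresponding quantity for the genuine representation $\pi$. The decisive observation is that the unimodular factor $m$ depends only on $(q,\xi)\in Q\times\hat V$ and \emph{not} on the vectors $\zeta_1,\zeta_2$. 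Consequently, after the unitary partial Fourier transform in the $V$-variable $L^2(G)\to L^2(Q\times\hat V,|q|_V^{-1}dq\,d\xi)$, the matrix coefficients of $\pi_\omega$ are obtained from those of $\pi$ simply by multiplication with the fixed function $m$.

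I would then record that multiplication by $m$ is a unitary operator $M_m$ on $L^2(Q\times\hat V,|q|_V^{-1}dq\,d\xi)$, since $|m|\equiv1$. For the genuine representation $\pi$ the explicit Kohn--Nirenberg quantization \eqref{OP} is a unitary $L^2(G)\to\HS(\CH)$ intertwining $\lambda$ with $\Ad\pi$ (equivalently, $W^*(G)$ is a type $I$ factor, \cite{BGNT3}), so the matrix coefficients of $\pi$ are complete in $L^2(G)$; transported by the partial Fourier transform they are dense in $L^2(Q\times\hat V,|q|_V^{-1}dq\,d\xi)$. Applying $M_m$ and pulling back shows that the matrix coefficients of $\pi_\omega$ are dense in $L^2(G)$ as well. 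This gives the required surjectivity, and with the isometry property, the unitarity of $\mathcal G_\omega$.

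The main obstacle is conceptual rather than computational, and it is what forces one to work with coefficients. A naive attempt to realize $\mathcal G_\omega$ as a fiberwise unitary conjugate of $\mathcal G$ on $L^2(G;\HS(\CH))$ fails: writing $\pi_\omega(g)=\pi(g)\,\omega(q,\cdot)$ inserts the multiplier on \emph{both} sides of $a$ in $\beta_g(a)=\pi_\omega(g)\,a\,\pi_\omega(g)^*$, and this two-sided, $g$-dependent conjugation cannot be absorbed by a single unitary acting on one tensor leg. Passing to the scalar matrix coefficients is precisely what collapses this two-sided conjugation into multiplication by the single function $m$. The points requiring care are therefore (i) the clean justification, via \cite[Proposition 2.9]{BGNT3} and the orthogonality relations for projective representations, that surjectivity of $\mathcal G_\omega$ is equivalent to density of the coefficients; and (ii) the routine density and closure arguments needed to pass from $\zeta_1,\zeta_2\in C_c(Q)$ to all of $\HS(\CH)$.
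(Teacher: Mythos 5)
Your computational core is sound, but the reduction on which the whole proof rests is unjustified and, as stated, circular. You invoke \cite[Proposition 2.9]{BGNT3} to claim that surjectivity of $\G_\omega$ is equivalent to the existence of a unitary equivariant quantization for $\pi_\omega$ (i.e.\ to $\lambda\simeq\Ad\pi_\omega$), and then treat this as equivalent to density of the $\pi_\omega$-coefficients. Proposition 2.9 cannot deliver this: as recalled in Subsection \ref{EQ}, it \emph{assumes} $[\omega]\in H^2_I(G,\T)$ --- which, by \cite[Theorem 2.4]{BGNT3}, is exactly the unitarity of $\G_\omega$ to be proved --- and only then relates quantization maps to dual $2$-cocycles. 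Worse, your asserted equivalence contains the implication ``$W^*(G,\omega)$ type $I$ $\Rightarrow$ a unitary equivariant quantization exists'', which the paper explicitly lists as unknown for nontrivial classes; constructing that quantization for the present groups is precisely what Proposition \ref{UEQ} later achieves, by explicit formulas, not by abstract equivalence. There is also a representation-theoretic slip underneath: the closed span of the coefficients $g\mapsto\langle\zeta_1,\pi_\omega(g)\zeta_2\rangle$ is \emph{not} $\lambda$-invariant (left translation produces an extra cocycle factor); it is invariant under the $\omega$-twisted left regular representation. Hence density of coefficients is equivalent to the twisted regular representation being a multiple of $\pi_\omega$, i.e.\ to $W^*(G,\omega)\simeq B(\CH)$, and not, by any formal argument, to $\lambda\simeq\pi_\omega\otimes\pi_\omega^c$. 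The available reduction is therefore: $\G_\omega$ unitary $\Leftrightarrow$ $W^*(G,\omega)$ is a type $I$ factor (\cite[Theorem 2.4]{BGNT3}) $\Leftrightarrow$ the $L^2$-coefficients of $\pi_\omega$ are total in $L^2(G)$ (the twisted isotypic-component argument, using the orthogonality relations of \cite{A} and Lemma \ref{DMomega}). With that substitution --- a repair of the logic, not of your computation --- your multiplier argument (after partial Fourier transform in $v$, the $\pi_\omega$-coefficients equal the fixed unimodular function $m(q,\xi)=\omega(q,q^{-1}\phi^{-1}(\xi))$ times the $\pi$-coefficients, and the latter are total because $W^*(G)$ is a type $I$ factor) does prove the proposition.

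You should also withdraw the ``obstacle'' paragraph: the approach you declare impossible is exactly the paper's proof, and it is the shorter route. Computing $\G_\omega$ on kernel operators $k_1,k_2\in C_c(Q\times Q)$, the two-sided twist $\omega(q,q^{-1}q_1)\,\overline\omega(q,q^{-1}q_0)$ does appear, but the same Fourier mechanism you exploit for coefficients collapses it: the oscillation $e^{-i\langle q_1^\flat\xi_0-q_0^\flat\xi_0,v\rangle}$ turns the integral over the inner variable $q_0$ into an inverse Fourier transform in $v$, trading $q_0$ for $\phi^{-1}(q_1^\flat\xi_0+\xi)$ by the free-orbit condition. The outcome is
\begin{align*}
\G_\omega=(\CF_V^*\otimes 1)\,U_\omega\,(\CF_V\otimes 1)\,\G,
\end{align*}
where $U_\omega$ is multiplication by the unimodular function $(q,\xi;q_1,q_2)\mapsto\omega(q,q^{-1}q_1)\,\overline\omega\big(q,q^{-1}\phi^{-1}(q_1^\flat\xi_0+\xi)\big)$ on $L^2\big(G;L^2(Q\times Q)\big)$. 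This unitary acts jointly on the group variable and the first kernel variable; it is not confined to a single tensor leg, which is why your no-go reasoning does not apply. Granting unitarity of $\G$ (from $W^*(G)$ being a type $I$ factor, \cite[Proposition 2.24 and Theorem 2.4]{BGNT3}), unitarity of $\G_\omega$ follows at once, with no detour through coefficient density, isotypic components, or quantization maps.
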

\bp
If $x\in B(\CH)$  is a kernel operator
with kernel $k\in C_c(Q\times Q)$, then $xD^{1/2}$ is also a kernel operator
with kernel $k(1\otimes\Delta_G|_Q^{-1/2})\in C_c(Q\times Q)$. Since $\Delta_G|_Q(q)=|q|_V^{-1}\Delta_Q(q)$, one can therefore take
for the
GNS space of the canonical weight $\vf=\vf_\omega$ the Hilbert space:
$$
L^2\big(Q\times Q,\nu\otimes\tilde \nu\big)\quad\mbox{where}\quad d\nu(q)=|q|_V^{-1}dq
\quad\mbox{and}\quad d\tilde \nu(q)=\Delta_Q^{-1}(q)dq ,
$$
and the GNS map is just the map which associates to a kernel operator  its kernel. Since the GNS map intertwines $\Ad\pi_\omega$
with $\pi_\omega\otimes\pi^c_\omega$, the isometric Galois map \eqref{GM-def}:
\begin{align*}
{\mathcal G}_\omega: L^2(Q\times Q,\nu\otimes\tilde\nu)\otimes L^2(Q\times Q,\nu\otimes\tilde\nu)\to
L^2\big(G,L^2(Q\times Q,\nu\otimes\tilde\nu)\big),
\end{align*}
reads for $k_1,k_2\in C_c(Q\times Q)$:
\begin{align*}
&{\mathcal G}_\omega(k_1\otimes k_2)(g;q_1,q_2)=\int_Q\big((\pi_\omega(g)\otimes\pi_\omega^c(g))k_1(q_1,q_0)\big)\,k_2(q_0,q_2)\,
\frac{dq_0  }{|q_0|_V}\\
&=\int_Q
|q|_V\,e^{-i\langle q_1^\flat\xi_0-q_0^\flat\xi_0,v\rangle}\,\omega(q, q^{-1}q_1)\overline\omega(q, q^{-1}q_0)\,k_1(q^{-1}q_1,q^{-1}q_0)
\,k_2(q_0,q_2)\,
\frac{dq_0  }{|q_0|_V}\\
&=\int_{\hat V}
|q|_V\,e^{i\langle \xi,v\rangle}\,\omega(q, q^{-1}q_1)\overline\omega(q, q^{-1}\phi^{-1}(q_1^\flat\xi_0+\xi))\,
k_1(q^{-1}q_1,q^{-1}\phi^{-1}(q_1^\flat\xi_0+\xi))\\
&\hspace{9.5cm} k_2(\phi^{-1}(q_1^\flat\xi_0+\xi),q_2)\,
d\xi .
\end{align*}
Hence, we get:
$$
\G_\omega= (\CF_V^*\otimes 1)U_\omega (\CF_V\otimes 1)\G,
$$
where $U_\omega$ is the unitary operator of multiplication
by the function 
$$
(q,\xi,q_1,q_2)\mapsto \omega(q, q^{-1}q_1)\overline\omega(q, q^{-1}\phi^{-1}(q_1^\flat\xi_0+\xi)),
$$
and $\G$ is the Galois map associated with the action $\Ad\pi$ on $B(\CH)$.
This concludes the proof since by 
  \cite[Proposition 2.24]{BGNT3}, $W^*(G)$ is a type $I$ factor which by  \cite[Theorem 2.4]{BGNT3} implies that $\G$ is unitary.
\ep

\subsection{The quantization map}
We now explain how to modify the Kohn-Nirenberg quantization \eqref{OP} in order that it becomes covariant
with respect to $\pi_\omega$. At the formal level, it is easy to see that any  quantization map $\Op_\omega:L^2(G)\to \HS(\CH)$
satisfying the covariance relation
$$
\Ad\pi_\omega(g)\circ\Op_\omega=\Op_\omega\circ\lambda_g ,
$$
must be of the following form (for suitable functions $\zeta_1,\zeta_2$ on $Q$ and $f$ on $G$):
$$
\langle \zeta_1,\Op_\omega(f)\zeta_2\rangle = \int f(g)\; T\big(\overline{\pi_\omega(g)^*\zeta_1}\otimes \pi_\omega(g)^*\zeta_2\big)\,dg,
$$
where $T$ is a suitable linear functional. For
the Kohn-Nirenberg quantization $\Op$ given in \eqref{OP}, we find that $T=\delta_e\otimes \nu$ where $\delta_e$ is the Dirac mass 
 at the neutral element  of $Q$
and $\nu$ is the integral on $Q$ against the measure $|q|_V^{-1}dq$. Indeed,  for $f\in C_c(G)$ and $\zeta_1,\zeta_2\in C_c(Q)$, we see by
\eqref{def:pi} that the map
$(g,q)\mapsto f(g)\,\overline{\pi(g)^*\zeta_1}(e)\,\pi(g)^*\zeta_2(q)$ belongs to $C_c(G\times Q)$. Therefore, we can use the Theorem of Fubini to get
$$
\langle \zeta_1,\Op(f)\zeta_2\rangle = \int_{G} f(g)\,\overline{( \pi(g)^*\zeta_1)}(e)  \bigg(\int_Q\pi(g)^*\zeta_2(q_0)  \frac{dq_0  }{|q_0 |_{V}}\bigg)
\, dg.
$$
It is therefore natural to take the following  initial definition 
of the Kohn-Nirenberg quantization of the semidirect product $G=Q\ltimes V$ for the projective representation $\pi_\omega$:

\begin{definition}
For $f\in C_c(G)$, let $\Op_\omega(f)$ be the sesquilinear form on $C_c(Q)$ given by:
 $$
\Op_\omega(f)[\zeta_1 ,\zeta_2 ]:=\int_{G} f(g)\,\overline{( \pi_\omega(g)^*\zeta_1 )}(e)  \bigg(\int_Q\pi_\omega(g)^*\zeta_2 (q_0)  \frac{dq_0  }{|q_0 |_{V}}\bigg)
\, dg.
$$
\end{definition}

\begin{proposition}
\label{UEQ}
For $f\in C_c(G)$ and $\zeta_1 ,\zeta_2 \in C_c(Q)$, we have
$$
\Op_\omega(f)[\zeta_1 ,\zeta_2 ]=\int_{Q\times Q}\overline\zeta_1 (q_0)\,K_\omega(f)(q_0,q)\,\zeta_2 (q)\,
 \frac{dq_0  }{|q_0 |_{V}} \frac{dq }{|q |_{V}},
 $$
 where
 \begin{align}
 \label{Komega}
 K_\omega(f)(q_0,q)= \overline{\omega}(q_0,q_0^{-1}q)\,(\CF_{\hat V}^*f)(q_0, q^\flat\xi_0- q_0^\flat\xi_0).
 \end{align}
 Consequently, the quantization map $\Op_\omega$ extends to a unitary operator from $L^2(G)$
 to ${\HS}(\CH)$ which intertwines the representations $\lambda$ and $\Ad\pi_\omega$ of $G$.
\end{proposition}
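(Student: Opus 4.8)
The plan is to reduce everything to one explicit computation, namely that of the adjoint $\pi_\omega(g)^*$, after which the kernel formula, the unitarity and the intertwining relation all fall out with only bookkeeping effort.

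First I would compute $\pi_\omega(g)^*$ for $g=(q,v)$. Since $\pi_\omega$ is a projective \emph{unitary} representation with $\pi_\omega(e)=\id$ (by the normalization \eqref{NOR}), one has $\pi_\omega(g)^*=\pi_\omega(g)^{-1}=\overline{\omega(q,q^{-1})}\,\pi_\omega(g^{-1})$. Writing $\pi_\omega(g^{-1})$ out from \eqref{piomega}, using $(q\tilde q)^\flat=q^\flat\tilde q^\flat$ and simplifying the two resulting cocycle factors by the cocycle relation \eqref{CE} specialized to the triple $(q,q^{-1},q\tilde q)$ (which gives $\omega(q^{-1},q\tilde q)=\omega(q,q^{-1})\,\overline{\omega(q,\tilde q)}$), I expect the clean expression
$$(\pi_\omega(g)^*\zeta)(\tilde q)=\overline{\omega(q,\tilde q)}\,|q|_V^{-1/2}\,e^{i\langle(q\tilde q)^\flat\xi_0,v\rangle}\,\zeta(q\tilde q).$$
Substituting this into the defining sesquilinear form, evaluation of the first slot at $\tilde q=e$ kills the cocycle (as $\omega(q,e)=1$) and yields $|q|_V^{-1/2}e^{-i\langle q^\flat\xi_0,v\rangle}\overline{\zeta_1(q)}$, while in the second slot the change of variables $q'=qq_0$ converts the integral against $|q_0|_V^{-1}dq_0$ into one over $q'$ carrying the factor $\overline{\omega(q,q^{-1}q')}$ and the phase $e^{i\langle q'^\flat\xi_0,v\rangle}$. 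The powers of $|q|_V$ combine with the Haar density $dg=|q|_V^{-1}dq\,dv$ to produce exactly the measures $|q|_V^{-1}dq$ and $|q'|_V^{-1}dq'$, and the remaining $v$-integration is precisely the partial Fourier transform $(\CF_{\hat V}^*f)(q,q'^\flat\xi_0-q^\flat\xi_0)$. Relabelling $(q,q')\mapsto(q_0,q)$ then gives \eqref{Komega}. For $f\in C_c(G)$ and $\zeta_j\in C_c(Q)$ all integrands are continuous with compact support, so Fubini and the change of variables are legitimate.

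For the unitarity the key observation is that $K_\omega(f)=M_\omega\,K(f)$, where $K(f)(q_0,q)=(\CF_{\hat V}^*f)(q_0,q^\flat\xi_0-q_0^\flat\xi_0)$ is the kernel of the genuine Kohn--Nirenberg map $\Op$ of \eqref{OP}, and $M_\omega$ is multiplication by the unimodular function $(q_0,q)\mapsto\overline{\omega(q_0,q_0^{-1}q)}$. Identifying $\HS(\CH)$ with $L^2(Q\times Q,\nu\otimes\nu)$, $\nu=|q|_V^{-1}dq$, via kernels, $M_\omega$ is a unitary multiplier and $f\mapsto K(f)$ is the already-unitary operator $\Op$. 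Hence $\Op_\omega=M_\omega\circ\Op$ is unitary; in particular $\|\Op_\omega(f)\|_{\HS}=\|\Op(f)\|_{\HS}=\|f\|_2$, so $\Op_\omega$ extends isometrically from $C_c(G)$ to all of $L^2(G)$, and surjectivity is inherited from that of $\Op$.

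Finally, for the intertwining $\Ad\pi_\omega(g)\circ\Op_\omega=\Op_\omega\circ\lambda_g$, I would argue directly from the definition rather than from the kernel. Replacing $\zeta_1,\zeta_2$ by $\pi_\omega(h)^*\zeta_1,\pi_\omega(h)^*\zeta_2$ in the sesquilinear form and using $\pi_\omega(g)^*\pi_\omega(h)^*=\overline{\omega(r,q)}\,\pi_\omega(hg)^*$ (with $h=(r,w)$), the scalar $\overline{\omega(r,q)}$ enters once conjugated, through the $\zeta_1$-slot, and once unconjugated, through the $\zeta_2$-slot, so the two cancel; the substitution $g\mapsto h^{-1}g$ together with left-invariance of $dg$ then turns $f$ into $\lambda_hf$, which is the claim. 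The main obstacle is the cocycle bookkeeping itself—pinning down $\pi_\omega(g)^*$ and spotting the cancellations forced by \eqref{CE}—together with the routine justification that the formal manipulations (Fubini, the $v$-integration, and the passage from the $C_c$-form to a bona fide Hilbert--Schmidt operator) are valid; once the kernel formula is in hand, the unitarity and the covariance are essentially automatic.
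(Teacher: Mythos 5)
Your proposal is correct and follows essentially the same route as the paper's proof: the adjoint formula you derive via the cocycle identity for $(q,q^{-1},q\tilde q)$ is exactly the formula for $\pi_\omega(g)^*$ that the paper uses, and your substitution, change of variables $q_0\mapsto q^{-1}q_0$, and identification of the $v$-integral as the partial Fourier transform reproduce the paper's kernel computation. Your unitarity argument (the cocycle enters the kernel only as a unimodular multiplier of the kernel of the untwisted map $\Op$) and your spelled-out covariance computation are precisely what the paper treats as immediate consequences of the kernel formula and of the sesquilinear-form definition.
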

\bp
The intertwining property is a direct consequence of the definition of the quantization map as a sesquilinear form.
Since
$$
\pi_\omega(q,v)^*\zeta(q_0)
=|q|_V^{-1/2}\,e^{i\langle (qq_0)^\flat\xi_0,v\rangle}\,\overline{\omega}(q, q_0)\,\zeta(qq_0),
$$
we get for continuous compactly supported $f,\zeta_1,\zeta_2$
(and remembering our normalization \eqref{NOR}):
\begin{align*}
 \Op_\omega(f)[\zeta_1 ,\zeta_2 ]
&=\int f(q,v)\,
e^{-i\langle q^\flat\xi_0,v\rangle}\,\overline\zeta_1 (q)
   \,\bigg(\int e^{i\langle (qq_0)^\flat\xi_0,v\rangle}\,\overline{\omega}(q, q_0)\,\zeta_2 (qq_0)\,
 \frac{dq_0  }{|q _0|_{V}}\bigg)
\, \frac{dq \, dv }{ |q|_V^2}\\
&=\int f(q,v)\,
e^{-i\langle q^\flat\xi_0,v\rangle}\,\overline\zeta_1 (q)
   \,\bigg(\int e^{i\langle q_0^\flat\xi_0,v\rangle}\,\overline{\omega}(q, q^{-1}q_0)\,\zeta_2 (q_0)\,
 \frac{dq_0  }{|q _0|_{V}}\bigg)
\, \frac{dq  \,dv }{ |q|_V}.
\end{align*}
Thus, we deduce by the Theorem of Fubini:
\begin{align*}
 &\Op_\omega(f)[\zeta_1 ,\zeta_2 ]
 =\int \overline\zeta_1 (q)\,
\overline{ \omega}( q,q^{-1}q_0)\,(\CF_{\hat V}^*f)(q, q_0^\flat\xi_0- q^\flat\xi_0)\,\zeta_2 (q_0)\,
 \frac{dq_0  }{|q _0|_{V}}
\, \frac{dq }{ |q|_V},
\end{align*}
which is the formula we need.
\ep

Proposition \ref{UEQ}  (and a small computation) allows  to express the quantization map $\Op_\omega$ in terms of
the quantization map $\Op$ associated with the genuine representation $\pi$:
\begin{lemma}
\label{Vomega}
For $f\in L^2(G)$, we have
$$
\Op_\omega(f)=\Op(\mathcal V_\omega f),
$$
where $\mathcal V_\omega$ is the unitary element of the commutative von Neumann algebra $\CF_V^* L^\infty(Q\times\hat V)\CF_V$ given by
$$
\CF_V\mathcal V_\omega f(q,\xi)= \overline\omega\big(q,\phi^{-1}(\xi_0-{q^{-1}}^\flat\xi)\big)\CF_{ V} f(q,\xi),
\quad\forall f\in L^2(G).
$$
\end{lemma}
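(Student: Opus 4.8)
The plan is to reduce the claimed operator identity to a pointwise comparison of integral kernels. By Proposition~\ref{UEQ}, $\Op_\omega(f)$ is the kernel operator on $\CH=L^2(Q,|q|_V^{-1}dq)$, taken with respect to the measure $|q|_V^{-1}dq$, whose kernel $K_\omega(f)$ is given by \eqref{Komega}; and by \eqref{OP} the genuine quantization $\Op(g)$ is the kernel operator (for the same measure) with kernel
$$
K(g)(q_0,q)=(\CF_{\hat V}^* g)(q_0,q^\flat\xi_0-q_0^\flat\xi_0).
$$
So the whole statement amounts to checking the identity $K_\omega(f)=K(\mathcal V_\omega f)$ on a dense subspace (say $f\in C_c(G)$), after which I would extend it to all of $L^2(G)$ by the unitarity of $\Op$ and $\Op_\omega$ established in Proposition~\ref{UEQ}.

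First I would rewrite the definition of $\mathcal V_\omega$, which is phrased in the $\CF_V$ picture, in terms of $\CF_{\hat V}^*$, using the sign relation $(\CF_V g)(q,\xi)=(\CF_{\hat V}^* g)(q,-\xi)$ together with the additivity of the dual action, ${q^{-1}}^\flat(-\xi)=-{q^{-1}}^\flat\xi$. This should give
$$
(\CF_{\hat V}^*\mathcal V_\omega f)(q,\xi)=\overline\omega\big(q,\phi^{-1}(\xi_0+{q^{-1}}^\flat\xi)\big)\,(\CF_{\hat V}^* f)(q,\xi).
$$

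The heart of the computation is then to evaluate this at the argument $(q_0,\,q^\flat\xi_0-q_0^\flat\xi_0)$ occurring in the kernel and to identify the resulting cocycle factor. Here I would use that $q\mapsto q^\flat$ is a left action, so $q_1^\flat q_2^\flat=(q_1q_2)^\flat$; in particular ${q_0^{-1}}^\flat q^\flat\xi_0=(q_0^{-1}q)^\flat\xi_0$ and ${q_0^{-1}}^\flat q_0^\flat\xi_0=\xi_0$. This collapses the argument of $\phi^{-1}$:
$$
\xi_0+{q_0^{-1}}^\flat\big(q^\flat\xi_0-q_0^\flat\xi_0\big)=(q_0^{-1}q)^\flat\xi_0=\phi(q_0^{-1}q),
$$
so that $\phi^{-1}$ of it is exactly $q_0^{-1}q$. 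Substituting back yields
$$
(\CF_{\hat V}^*\mathcal V_\omega f)(q_0,q^\flat\xi_0-q_0^\flat\xi_0)=\overline\omega(q_0,q_0^{-1}q)\,(\CF_{\hat V}^* f)(q_0,q^\flat\xi_0-q_0^\flat\xi_0),
$$
whose right-hand side is precisely $K_\omega(f)(q_0,q)$ by \eqref{Komega}. Thus $K(\mathcal V_\omega f)=K_\omega(f)$, which is the sought identity.

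It then remains only to record that $\mathcal V_\omega$ is a unitary lying in $\CF_V^* L^\infty(Q\times\hat V)\CF_V$: this is immediate, since in the $\CF_V$ picture it is multiplication by the unimodular function $(q,\xi)\mapsto\overline\omega\big(q,\phi^{-1}(\xi_0-{q^{-1}}^\flat\xi)\big)$, and conjugating such a multiplication operator by the unitary $\CF_V$ produces a unitary. I expect the only delicate point to be the bookkeeping of the Fourier-transform sign convention together with the dual-action identities; once those are fixed, the collapse $\phi^{-1}\big((q_0^{-1}q)^\flat\xi_0\big)=q_0^{-1}q$ is exactly what makes the two cocycle factors coincide, and there is no genuine analytic obstacle.
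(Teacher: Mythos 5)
Your proof is correct and is essentially the paper's own argument: the paper derives Lemma~\ref{Vomega} from Proposition~\ref{UEQ} ``and a small computation,'' and that computation is precisely your kernel comparison, translating the definition of $\mathcal V_\omega$ into the $\CF_{\hat V}^*$ picture via $(\CF_V f)(q,\xi)=(\CF_{\hat V}^*f)(q,-\xi)$ and using $\xi_0+{q_0^{-1}}^\flat\big(q^\flat\xi_0-q_0^\flat\xi_0\big)=\phi(q_0^{-1}q)$ to match the cocycle factor $\overline\omega(q_0,q_0^{-1}q)$ in \eqref{Komega}. The density/unitarity extension step you add is sound (indeed both kernel maps are defined on all of $L^2(G)$, so it is even slightly more than needed).
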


The following explains how  the quantization  behaves under the adjoint map:
\begin{lemma}
\label{OU}
For $f\in L^2(G)$, we have
$$
\Op_\omega(f)^*=\Op_\omega(\mathcal U_\omega Jf),
$$
where $\mathcal U_\omega$ is the unitary operator given by:
$$
\CF_V \,\mathcal U_\omega\,f(q,\xi)=\omega\big(\phi^{-1}(\xi_0-{q^{-1}}^\flat\xi),\phi^{-1}(\xi_0-{q^{-1}}^\flat\xi)^{-1}\big)\,
\CF_Vf \big(\phi^{-1}(q^\flat\xi_0-\xi),\xi\big),\quad\forall f\in L^2(G).
$$
 \end{lemma}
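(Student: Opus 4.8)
The plan is to bypass operator-theoretic manipulations and argue entirely at the level of integral kernels, using the explicit formula for $\Op_\omega(f)$ furnished by Proposition \ref{UEQ}. Recall from \eqref{Komega} that $\Op_\omega(f)$ is the operator on $\CH=L^2(Q,|q|_V^{-1}dq)$ with kernel $K_\omega(f)(q_0,q)$, both copies of $Q$ carrying the measure $|q|_V^{-1}dq$. Since our inner products are linear on the right, the adjoint of a kernel operator is again a kernel operator, with kernel $(q_0,q)\mapsto\overline{K_\omega(f)(q,q_0)}$. As $f\mapsto\Op_\omega(f)^*$ and $f\mapsto\Op_\omega(\mathcal U_\omega Jf)$ are both conjugate-linear isometries (recall $\Op_\omega$ is unitary and $J$ is conjugate-linear), it suffices to verify the identity on the dense subspace $C_c(G)$, where everything is a genuine integral; so the lemma reduces to the pointwise equality $\overline{K_\omega(f)(q,q_0)}=K_\omega(\mathcal U_\omega Jf)(q_0,q)$, and I would transform the former into the latter.

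First I would conjugate $K_\omega(f)(q,q_0)=\overline\omega(q,q^{-1}q_0)\,(\CF_{\hat V}^* f)(q,q_0^\flat\xi_0-q^\flat\xi_0)$. The cocycle factor becomes $\omega(q,q^{-1}q_0)$, and for the Fourier factor I would use the elementary conjugation rule $\overline{(\CF_{\hat V}^* f)(q,\eta)}=(\CF_{\hat V}^*\bar f)(q,-\eta)$ together with $\bar f=Jf$, yielding $(\CF_{\hat V}^*Jf)(q,q^\flat\xi_0-q_0^\flat\xi_0)$. Matching against the desired right-hand side $\overline\omega(q_0,q_0^{-1}q)\,(\CF_{\hat V}^*\mathcal U_\omega Jf)(q_0,q^\flat\xi_0-q_0^\flat\xi_0)$ and putting $\eta:=q^\flat\xi_0-q_0^\flat\xi_0$ (so $q=\phi^{-1}(q_0^\flat\xi_0+\eta)$ is determined by $q_0$ and $\eta$), the identity collapses to
$$
(\CF_{\hat V}^*\mathcal U_\omega Jf)(q_0,\eta)=\omega(q_0,q_0^{-1}q)\,\omega(q,q^{-1}q_0)\,(\CF_{\hat V}^*Jf)(q,\eta).
$$
The one genuinely algebraic step is to collapse the product of the two cocycle values: applying the cocycle relation \eqref{CE} with arguments $(q_0,\,q_0^{-1}q,\,q^{-1}q_0)$ and the normalization \eqref{NOR} gives $\omega(q_0,q_0^{-1}q)\,\omega(q,q^{-1}q_0)=\omega(r,r^{-1})$ with $r:=q_0^{-1}q$.

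It then remains to identify $r$ and to pass from $\CF_{\hat V}^*$ to $\CF_V$. Since $q\mapsto q^\flat$ is a homomorphism of $Q$ into $\Aut(\hat V)$ and $q^\flat\xi_0=q_0^\flat\xi_0+\eta$, a one-line computation gives $r^\flat\xi_0=(q_0^{-1})^\flat q^\flat\xi_0=\xi_0+{q_0^{-1}}^\flat\eta$, i.e. $r=\phi^{-1}(\xi_0+{q_0^{-1}}^\flat\eta)$. Finally I would use $(\CF_{\hat V}^*h)(q,\eta)=(\CF_V h)(q,-\eta)$, set $\xi:=-\eta$, and rename $q_0$ to $q$; under these substitutions $\xi_0+{q_0^{-1}}^\flat\eta$ becomes $\xi_0-{q^{-1}}^\flat\xi$ and $q_0^\flat\xi_0+\eta$ becomes $q^\flat\xi_0-\xi$, so the displayed identity turns into exactly the stated formula for $\CF_V\mathcal U_\omega$. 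Unitarity of $\mathcal U_\omega$ is then immediate: it is multiplication by the unimodular factor $\omega(r,r^{-1})$ composed with the fibrewise substitution $q\mapsto\phi^{-1}(q^\flat\xi_0-\xi)$, which preserves $|q|_V^{-1}dq$ by the change-of-variables formula \eqref{CVF} together with the translation invariance of $d\xi$.

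The only real difficulty here is bookkeeping rather than mathematics: the swap $q_0\leftrightarrow q$ produced by the adjoint, the sign in the conjugation rule for $\CF_{\hat V}^*$, and the final $\CF_{\hat V}^*\leftrightarrow\CF_V$ flip all interact, so I would keep $\eta$ as the free momentum variable throughout and only substitute $\xi=-\eta$ at the very end. An alternative route would start from $\Op_\omega(f)=\Op(\mathcal V_\omega f)$ of Lemma \ref{Vomega} and combine it with the adjoint behaviour of the genuine Kohn--Nirenberg map $\Op$ (the $\omega=1$ case); but since that special case is not recorded as a standalone statement in the present excerpt, the direct kernel computation above is the most self-contained.
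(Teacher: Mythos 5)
Your proof is correct, but it takes a genuinely different route from the paper's. The paper never touches kernels: it \emph{defines} $\mathcal U_\omega f:=\Op_\omega^*\big(\Op_\omega(Jf)^*\big)$, so that the identity $\Op_\omega(f)^*=\Op_\omega(\mathcal U_\omega Jf)$ holds by construction and unitarity of $\mathcal U_\omega$ is automatic (it is a composition of unitaries and conjugations); the entire content of the paper's proof is then the computation of the explicit formula, done by factoring through the untwisted case: Lemma \ref{Vomega} gives $\Op_\omega(f)^*=\Op(\mathcal V_\omega f)^*=\Op(\mathcal U J\mathcal V_\omega f)$, hence $\mathcal U_\omega=\mathcal V_\omega^*\,\mathcal U J\mathcal V_\omega J$, where the formula for the untwisted operator $\mathcal U$, namely $\CF_V\mathcal U f(q,\xi)=\CF_Vf(\phi^{-1}(q^\flat\xi_0-\xi),\xi)$, is imported from \cite[Lemma 3.23]{BGNT3}; unwinding the two $\mathcal V_\omega$-phases produces two cocycle factors merged by the cocycle relation. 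This is precisely the ``alternative route'' you mention and set aside at the end — your instinct that the untwisted adjoint formula is not available as a standalone statement in the text is right, since the paper simply cites it from the earlier reference. Your direct kernel computation from Proposition \ref{UEQ} buys self-containedness (no Lemma \ref{Vomega}, no external citation), at the cost of two things the paper gets for free and which you correctly supply by hand: the a.e.\ density/extension argument, and unitarity of the explicitly defined $\mathcal U_\omega$, which you verify via \eqref{CVF} and translation invariance of $d\xi$. All your individual steps check out: the adjoint-kernel rule $K^*(q_0,q)=\overline{K(q,q_0)}$, the conjugation rule $\overline{(\CF_{\hat V}^*f)(q,\eta)}=(\CF_{\hat V}^*\bar f)(q,-\eta)$, the identification $r^\flat\xi_0=\xi_0+{q_0^{-1}}^\flat\eta$, and the final sign bookkeeping. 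Note also that your cocycle step and the paper's are the same identity in different coordinates: after your renaming, your triple $(q_0,q_0^{-1}q,q^{-1}q_0)$ becomes exactly the paper's $\big(q,\phi^{-1}(\xi_0-{q^{-1}}^\flat\xi),\phi^{-1}(\xi_0-{q^{-1}}^\flat\xi)^{-1}\big)$, with $r=\phi^{-1}(\xi_0-{q^{-1}}^\flat\xi)$.
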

\bp
Define $\mathcal U_\omega$ and $\mathcal U$ to be the unitary operators on $L^2(G)$ given by
$$
\mathcal U_\omega f:=\Op_\omega^*(\Op_\omega(Jf)^*)
\quad\mbox{and}\quad
\mathcal U f:=\Op^*(\Op(Jf)^*),
$$
where $J$ is the complex conjugation.
By Lemma \ref{Vomega}, we get
$$
\Op_\omega(f)^*=\Op(\mathcal V_\omega f)^*=\Op(\mathcal U J\mathcal V_\omega f)=\Op_\omega(\mathcal V_\omega^*\mathcal U J\mathcal V_\omega f),
$$
and therefore $\mathcal U_\omega =\mathcal V_\omega^*\mathcal U J\mathcal V_\omega J$. By \cite[Lemma 3.23]{BGNT3}, we have
for $f\in L^2(G)$:
$$
\CF_V \,\mathcal U\,f(q,\xi)=
\CF_Vf \big(\phi^{-1}(q^\flat\xi_0-\xi),\xi\big).
$$
Since moreover
$\CF_V Jf (q,\xi)=\overline{\CF_Vf}(q,-\xi)$,
we get
\begin{multline*}
\CF_V \,\mathcal U_\omega\,f(q,\xi)=\\
\omega\big(q,\phi^{-1}(\xi_0-{q^{-1}}^\flat\xi)\big)
\omega\big(q\phi^{-1}(\xi_0-{q^{-1}}^\flat\xi),\phi^{-1}(\xi_0-{q^{-1}}^\flat\xi)^{-1}\big)\,
\CF_Vf \big(\phi^{-1}(q^\flat\xi_0-\xi),\xi\big),
\end{multline*}
and the formula follows from the cocycle relation for $(q,\phi^{-1}(\xi_0-{q^{-1}}^\flat\xi),\phi^{-1}(\xi_0-{q^{-1}}^\flat\xi)^{-1})$.
\ep

\begin{remark}
It can be shown that $\mathcal U_\omega$ belongs to $W^*(G)'$. Indeed, a simple modification of the computations  given
in \cite[Lemma 3.25]{BGNT3}, shows that  we have (with absolutely convergent integrals) for $f\in C_c(G)$: 
$$
\big(\mathcal U_\omega f\big)(q,v)=\int_G
e^{i\ang*{q'^\flat\xi_0-\xi_0,v'}}\;
\omega(q',q'^{-1})\;
\Delta_G^{-1/2}(q',v')\;
\big(\rho_{(q',v')}f\big)(q,v) \;
\frac{dq'dv'}{|q'|_V}.
$$
Moreover $\mathcal U_\omega=\mathcal U$ when $\omega$ is a normalized cocycle (i.e$.$ when $\omega(q,q^{-1})=1$, $\forall q\in Q$).
\end{remark}

\subsection{The dual $2$-cocycle}
Since   the twisted group von Neumann algebra $W^*(G,\omega)$ is a type $I$ factor
and since we dispose of  an equivariant unitary quantization map  $\Op_\omega$, we know by
\cite[Proposition 2.9]{BGNT3} that a dual unitary $2$-cocycle on $G=Q\ltimes V$ is defined by the formula:
\begin{align}
\label{DBTF}
\Omega_\omega:=(\CJ\otimes\CJ)\tilde\G_\omega^*(1\otimes\CJ)\hat W,
\end{align}
where we recall that $\CJ=J\hat J$ and  $\tilde\G_\omega$ is the Galois map  \eqref{eq:Galois} of the $G$-Galois object
$(B(\CH),\Ad\pi_\omega)$, transported to $L^2(G\times G)$ by the quantization map $\Op_\omega$.
To compute $\Omega_\omega$, our first task is to find an explicit expression for  $\tilde\G_\omega$.

\begin{lemma}
\label{Gomega}
For $f\in L^2(Q\times\hat V\times Q\times\hat V)$, we have:
\begin{multline*}
(\CF_V\otimes \CF_V)\tilde\G_\omega(\CF_V^*\otimes \CF_V^*)f(q_1,\xi_1;q_2,\xi_2)=|q_1|_V\,\Delta_G^{-1/2}(q_1,0)\,
\Delta_G^{1/2}\big(\phi^{-1}(q_2^\flat\xi_0+\xi_1),0\big)
\\
   \overline{\omega}(  \phi^{-1}(\xi_0+{q_2^{-1}}^\flat\xi_1),\phi^{-1}(\xi_0+{q_2^{-1}}^\flat\xi_1)^{-1}
  \phi^{-1}(\xi_0-{q_2^{-1}}^\flat\xi_2)\big)\\
f \big(q_1^{-1}q_2,-{q_1^{-1}}^\flat\xi_1;\phi^{-1}(q_2^\flat\xi_0+\xi_1),\xi_1+\xi_2\big).
\end{multline*}
\end{lemma}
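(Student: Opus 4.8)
The plan is to reduce the twisted Galois map $\tilde\G_\omega$ to the untwisted one, $\tilde\G$ (the case $\omega=1$, established in \cite{BGNT3}), by isolating the three facts that separate the $\omega$-data from the genuine representation $\pi$. First, \eqref{piomega} shows that $\pi_\omega(q,v)=\pi(q,v)\,m_q$, where $m_q$ is the unitary operator of multiplication by $[\tilde q\mapsto\omega(q,\tilde q)]$ on $\CH=L^2(Q,|q|_V^{-1}dq)$. Since $m_q$ commutes with the Duflo--Moore operator $D$ (a multiplication operator, by Lemma \ref{DMomega}), and since a direct computation from \eqref{def:pi} gives the covariance $\pi(g)D\pi(g)^*=\Delta_G(g)\,D$, we obtain $\Ad\pi_\omega(g)(D^{-1/2})=\Delta_G(g)^{-1/2}D^{-1/2}$. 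Combined with the equivariance $\Ad\pi_\omega(g)(\Op_\omega(f))=\Op_\omega(\lambda_g f)$ of Proposition \ref{UEQ}, this pulls a scalar out of \eqref{eq:Galois}:
$$
\tilde\G_\omega(f_1\otimes f_2)(g,\cdot)=\Delta_G(g)^{-1/2}\,\tilde\Lambda_\omega\big(\Op_\omega(\lambda_g f_1)D^{-1/2}\,\Op_\omega(f_2)D^{-1/2}\big),
$$
and once $g$ is Fourier-transformed into $(q_1,\xi_1)$ this accounts for the prefactor $\Delta_G^{-1/2}(q_1,0)$.

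Next I would invoke Lemma \ref{Vomega}, $\Op_\omega=\Op\circ\mathcal V_\omega$, together with its two consequences: the canonical weights and GNS spaces of $\Ad\pi_\omega$ and $\Ad\pi$ coincide (Lemma \ref{DMomega}), and the GNS maps satisfy $\tilde\Lambda_\omega=\mathcal V_\omega^*\,\tilde\Lambda$, which is immediate from \eqref{eq:GNS} since $\Op_\omega(F)D^{-1/2}=\Op(\mathcal V_\omega F)D^{-1/2}$. Writing $M(a,b):=\tilde\Lambda\big(\Op(a)D^{-1/2}\Op(b)D^{-1/2}\big)$ for the untwisted product map, so that $\tilde\G(f_1\otimes f_2)(g,\cdot)=\Delta_G(g)^{-1/2}M(\lambda_g f_1,f_2)$ determines $M$, these identities give
$$
\tilde\G_\omega(f_1\otimes f_2)(g,\cdot)=\Delta_G(g)^{-1/2}\,\mathcal V_\omega^*\,M\big(\mathcal V_\omega\lambda_g f_1,\ \mathcal V_\omega f_2\big).
$$
The Fourier form of $M$ is exactly the $\omega=1$ case of the asserted formula, carrying the transformation $(q_1,\xi_1;q_2,\xi_2)\mapsto\big(q_1^{-1}q_2,-{q_1^{-1}}^\flat\xi_1;\phi^{-1}(q_2^\flat\xi_0+\xi_1),\xi_1+\xi_2\big)$ and the Jacobian factors $|q_1|_V\,\Delta_G^{1/2}(\phi^{-1}(q_2^\flat\xi_0+\xi_1),0)$. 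I would then conjugate by $\CF_V\otimes\CF_V$ and substitute the explicit multiplier $\CF_V\mathcal V_\omega f(q,\xi)=\overline\omega\big(q,\phi^{-1}(\xi_0-{q^{-1}}^\flat\xi)\big)\CF_Vf(q,\xi)$ into each of the three $\mathcal V_\omega$-multipliers. Their arguments are read off by composing with the transformation above: the output $\mathcal V_\omega^*$ contributes $\omega$ at $(q_2,\xi_2)$, while the two input factors contribute $\overline\omega$ at the first- and second-slot images $(q_1^{-1}q_2,-{q_1^{-1}}^\flat\xi_1)$ and $(\phi^{-1}(q_2^\flat\xi_0+\xi_1),\xi_1+\xi_2)$.

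The \emph{main step} is then the consolidation of these three cocycle values, sitting at three distinct argument pairs, into the single factor $\overline\omega\big(\phi^{-1}(\xi_0+{q_2^{-1}}^\flat\xi_1),\phi^{-1}(\xi_0+{q_2^{-1}}^\flat\xi_1)^{-1}\phi^{-1}(\xi_0-{q_2^{-1}}^\flat\xi_2)\big)$ displayed in the statement. Once the three arguments are rewritten in the variables $(q_1,\xi_1,q_2,\xi_2)$ using the duality relation defining $q^\flat$, the modulus identity $|q^\flat|_{\hat V}=|q|_V^{-1}$, and the change of variables \eqref{CVF} for $\phi$, the required equality takes the four-term shape $\omega(a)\,\omega(b)=\omega(c)\,\omega(d)$, hence follows from one application of the cocycle relation \eqref{CE}, with the normalization \eqref{NOR} disposing of the boundary cases. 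I expect the only genuine difficulty to lie here, in pinning down the exact arguments so that \eqref{CE} applies cleanly, the modular/Jacobian bookkeeping being otherwise routine. If one prefers a derivation independent of the untwisted computation of \cite{BGNT3}, the same result is reached directly by realizing both $\Op_\omega(\cdot)$ as kernel operators through \eqref{Komega}, composing the kernels against $D^{-1/2}$ and the measure $|q|_V^{-1}dq$, and performing the identical Fourier conjugation and change of variables.
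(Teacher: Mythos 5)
Your strategy is genuinely different from the paper's proof, which never factorizes through the untwisted case: there, the operator $\Op_\omega(\lambda_{g_1}f_1)D^{-1/2}\Op_\omega(f_2)$ is composed directly at the level of kernels via Proposition \ref{UEQ}, the kernel formula is inverted, the cocycle identity \eqref{CE} is applied midway, and the Fourier analysis is then carried out with all $\omega$-factors on board. Your preparatory identities are all correct: $\pi_\omega(g)=\pi(g)m_q$, the quasi-invariance $\Ad\pi_\omega(g)(D^{-1/2})=\Delta_G(g)^{-1/2}D^{-1/2}$, the factorizations $\Op_\omega=\Op\circ\mathcal V_\omega$ and $\tilde\Lambda_\omega=\mathcal V_\omega^*\circ\tilde\Lambda$ (legitimate since the weights coincide, by Lemma \ref{DMomega} and \eqref{vf-DM}), and hence also your displayed identity $\tilde\G_\omega(f_1\otimes f_2)(g,\cdot)=\Delta_G(g)^{-1/2}\,\mathcal V_\omega^*M(\mathcal V_\omega\lambda_g f_1,\mathcal V_\omega f_2)$.

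The gap is in the step where you read off the arguments of the three cocycle factors, and as written that step fails. Your prescription --- output factor $\overline v$ at $(q_2,\xi_2)$, input factors $v$ at the two slot images of the untwisted transformation, where $v(q,\xi):=\overline\omega\big(q,\phi^{-1}(\xi_0-{q^{-1}}^\flat\xi)\big)$ is the multiplier of Lemma \ref{Vomega} --- computes the operator $(1\otimes\mathcal V_\omega^*)\,\tilde\G\,(\mathcal V_\omega\otimes\mathcal V_\omega)$; that is, it silently replaces $M(\mathcal V_\omega\lambda_gf_1,\mathcal V_\omega f_2)$ by $M(\lambda_g\mathcal V_\omega f_1,\mathcal V_\omega f_2)$. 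But $\mathcal V_\omega$ does not commute with left translations: by \eqref{CFL}, the conjugate $\Ad\lambda_{g^{-1}}(\mathcal V_\omega)$, for $g=(q_1,v_1)$, is the Fourier multiplier by $(q',\xi')\mapsto v(q_1q',q_1^\flat\xi')$, which is a different function than $v$. One must first write $\mathcal V_\omega\lambda_gf_1=\lambda_g\big(\Ad\lambda_{g^{-1}}(\mathcal V_\omega)f_1\big)$ and only then apply the untwisted formula slotwise; the first-slot contribution is then $v\big(q_1\cdot q_1^{-1}q_2,\,q_1^\flat(-{q_1^{-1}}^\flat\xi_1)\big)=v(q_2,-\xi_1)=\overline\omega(q_2,a)$, with $a:=\phi^{-1}(\xi_0+{q_2^{-1}}^\flat\xi_1)$, and not your $v(q_1^{-1}q_2,-{q_1^{-1}}^\flat\xi_1)=\overline\omega(q_1^{-1}q_2,a)$. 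The discrepancy is fatal as stated: with $b:=\phi^{-1}(\xi_0-{q_2^{-1}}^\flat\xi_2)$, your three factors multiply to $\overline\omega(a,a^{-1}b)\,\omega(q_2,a)\,\overline\omega(q_1^{-1}q_2,a)$, which retains a spurious $q_1$-dependence that the asserted (manifestly $q_1$-free) cocycle factor does not have, so no application of \eqref{CE} can rescue it. With the corrected first factor, the product $\omega(q_2,b)\,\overline\omega(q_2,a)\,\overline\omega(q_2a,a^{-1}b)$ collapses, by one application of \eqref{CE} to the triple $(q_2,a,a^{-1}b)$, to exactly the stated $\overline\omega(a,a^{-1}b)$, and the Jacobian bookkeeping is as you describe. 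So your reduction does prove the lemma, but only after inserting this conjugation step --- which is precisely the ``pinning down of the exact arguments'' your sketch defers and then resolves incorrectly.
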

\bp
Let us denote by $\CF C_c(G)$ the dense subset of $L^2(G)$ consisting in functions of the form $\CF_{\hat V}\vf$ with $\vf\in C_c(Q\times \hat V)$.
Note that for $f\in \CF C_c(G)$,  the operator 
$\Op_\omega(f)D^{-1/2}$ extends to a bounded operator. Indeed, by Proposition \ref{UEQ} we deduce that $K_\omega(f)\in
C_c(Q\times Q)$ and
by Lemma \ref{DMomega} that  $\Op_\omega(f)D^{-1/2}$ is a kernel operator with kernel  given by $K_\omega(f)(1\otimes
\Delta_G^{1/2}|_Q)$ which belongs to $C_c(Q\times Q)$ too. 
Therefore, $\Op_\omega(f)D^{-1/2}$ is  Hilbert-Schmidt and thus bounded.

For $f_1,f_2\in \CF C_c(G)$, we  express the Galois map $\tilde \G_\omega$ in terms of the symbol
map $\Op_\omega^*:{\HS}(\CH)\to L^2(G)$ to get:
$$
\tilde \G_\omega(f_1\otimes f_2)(g_1,g_2)=\Op_\omega^*\big(\Ad\pi_\omega(g_1)\big(\Op_\omega(f_1)D^{-1/2}\big)
\Op_\omega(f_2)\big)(g_2).
$$
This formulas  makes perfect good sense since, as  already  observed, $\Op_\omega(f_1)D^{-1/2}$ is bounded and
$\Op_\omega(f_2)$ is Hilbert-Schmidt.
Equivariance of the quantization map
and the fact that the Duflo-Moore operator is $\pi_\omega$-quasi-invariant of weight $\Delta_G$, entail that
$$
\tilde\G_\omega(f_1\otimes f_2)(g_1,g_2)=\Delta_G^{-1/2}(g_1)\Op_\omega^*\big(\Op_\omega(\lambda_{g_1}(f_1))D^{-1/2}
\Op_\omega(f_2)\big)(g_2).
$$
The product formula for operator kernels  shows that
  $\Op_\omega(\lambda_{g_1}(f_1))D^{-1/2}\Op_\omega(f_2)$ has kernel $ k_{g_1}\in L^2(Q\times Q,|q_2|_V^{-1}dq_2|q_3|_V^{-1}dq_3)$, 
  given by
$$
k_{g_1}(q_2,q_3)
= \int K_{\omega}(\lambda_{g_1}f_1)(q_2,q)\,\Delta_G^{1/2}(q,0) \,
 K_\omega(f_2)(q,q_3)\frac{dq }{|q|_V}.
 $$
 Using Proposition \ref{UEQ}, we get:
 \begin{multline*}
 k_{g_1}(q_2,q_3)= \\\int \overline{\omega}(q_2,q_2^{-1}q)\,\overline{\omega}(q,q^{-1}q_3)\, (\CF_{ V}\lambda_{g_1}f_1)(q_2,q_2^\flat\xi_0-q^\flat\xi_0)\,
(\CF_{V}f_2)(q,q^\flat\xi_0-q_3^\flat\xi_0)\,\Delta_G^{1/2}(q,0) \frac{dq }{|q|_V}.
\end{multline*}
Inverting the kernel formula  of Proposition \ref{UEQ}, we deduce:
\begin{align*}
&(1\otimes \CF_V)\tilde \G_\omega(f_1\otimes f_2)(g_1;q_2,\xi_2)=\Delta_G^{-1/2}(q_1,0)\,
\omega\big(q_2,\phi^{-1}(\xi_0-{q_2^{-1}}^\flat\xi_2)\big)\,
 k_{g_1}\big(q_2,\phi^{-1}(q_2^\flat\xi_0-\xi_2)\big)\\
 &\hspace{3cm}=\Delta_G^{-1/2}(q_1,0)\,
\omega\big(q_2,\phi^{-1}(\xi_0-{q_2^{-1}}^\flat\xi_2)\big)
 \int \overline\omega(q_2,q_2^{-1}q)\overline\omega\big(q,q^{-1}\phi^{-1}(q_2^\flat\xi_0-\xi_2)\big) \\
 &\hspace{4cm}(\CF_{ V}\lambda_{g_1}f_1)(q_2,q_2^\flat\xi_0-q^\flat\xi_0)\,
(\CF_{V}f_2)(q,q^\flat\xi_0-q_2^\flat\xi_0+\xi_2)\,\Delta_G^{1/2}(q,0) \frac{dq }{|q|_V}.
\end{align*}
Using the cocycle relation for $(q_2,q_2^{-1}q, q^{-1}q_2\phi^{-1}(\xi_0-{q_2^{-1}}^\flat\xi_2))$, we obtain:
\begin{align*}
&(1\otimes \CF_V)\tilde \G_\omega(f_1\otimes f_2)(g_1;q_2,\xi_2)=\Delta_G^{-1/2}(q_1,0)\,
 \int \overline\omega\big(q_2^{-1}q,q^{-1}\phi^{-1}(q_2^\flat \xi_0-\xi_2)\big) \\
 &\hspace{4cm}(\CF_{ V}\lambda_{g_1}f_1)(q_2,q_2^\flat\xi_0-q^\flat\xi_0)\,
(\CF_{V}f_2)(q,q^\flat\xi_0-q_2^\flat\xi_0+\xi_2)\,\Delta_G^{1/2}(q,0) \frac{dq }{|q|_V}.
\end{align*}
Now, the  formula  (valid  for any $f\in L^2(G)$):
\begin{align}
\label{CFL}
(\CF_V\lambda_{g^{-1}} f)(q',\xi')=|q|_V^{-1} e^{i\langle q^\flat\xi',v\rangle}(\CF_Vf)(qq',q^\flat\xi'),
\end{align}
gives:
\begin{align*}
&(1\otimes \CF_V)\tilde \G_\omega(f_1\otimes f_2)(q_1,v_1;q_2,\xi_2)=\frac{|q_1|_V^{3/2}}{\Delta_Q^{1/2}(q_1)}\,
 \int  \overline\omega\big(q_2^{-1}q,q^{-1}\phi^{-1}(q_2^\flat \xi_0-\xi_2)\big) e^{-i\langle q_2^\flat\xi_0-q^\flat\xi_0,v_1\rangle}\\
 &\hspace{1.7cm}(\CF_{ V}f_1)(q_1^{-1}q_2,(q_1^{-1}q_2)^\flat\xi_0-(q_1^{-1}q)^\flat\xi_0)\,
(\CF_{V}f_2)(q,q^\flat\xi_0-q_2^\flat\xi_0+\xi_2)\,\Delta_G^{1/2}(q,0) \frac{dq }{|q|_V}.
\end{align*}
The $L^2$-Fourier inversion formula   gives then:
\begin{multline*}
(\CF_V\otimes \CF_V)\tilde\G_\omega(f_1\otimes f_2)(q_1,\xi_1;q_2,\xi_2)=\frac{|q_1|_V^{3/2}}{\Delta_Q^{1/2}(q_1)}\,
\Delta_G^{1/2}\big(\phi^{-1}(q_2^\flat\xi_0+\xi_1),0\big)\\
\overline \omega\big(\phi^{-1}(\xi_0+{q_2^{-1}}^\flat\xi_1),\phi^{-1}(\xi_0+{q_2^{-1}}^\flat\xi_1)^{-1}\phi^{-1}(\xi_0-{q_2^{-1}}^\flat\xi_2)\big)
\\
\CF_{ V}f_1(q_1^{-1}q_2,-{q_1^{-1}}^\flat\xi_1)\,
\CF_{V}f_2\big(\phi^{-1}(q_2^\flat\xi_0+\xi_1),\xi_1+\xi_2\big),
\end{multline*}
and the proof follows  by density of $\CF C_c(G)\otimes \CF C_c(G)$ in $L^2(G\times G)$.
\ep

\begin{theorem}
\label{OmegaF}
For $f\in L^2(Q\times\hat V\times Q\times\hat V)$, we have:
\begin{multline*}
\big((\CF_V\otimes\CF_V)\Omega_\omega (\CF_V^*\otimes\CF_V^*)f\big)(q_1,\xi_1;q_2,\xi_2)=
 |\phi^{-1}(\xi_0+\xi_1)|_V^{-1}\\
\omega\big(\phi^{-1}(\xi_0+\xi_1),\phi^{-1}(\xi_0+\xi_2)\big)\,
f\big(q_1, \xi_1 ;\phi^{-1}(\xi_0+\xi_1)q_2,\phi^{-1}(\xi_0+\xi_1)^\flat\xi_2\big).
\end{multline*}
Moreover, the map $\omega\mapsto \Omega_\omega$ induces an embedding of $H^2(Q,\mathbb T)$ into $H_I^2(\hat G,\mathbb T)$.
\end{theorem}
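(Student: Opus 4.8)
The statement of Theorem~\ref{OmegaF} has two parts: the explicit formula for $\Omega_\omega$ in the partial-Fourier picture, and the embedding of cohomology groups. For the formula, the plan is to feed the expression for $\tilde\G_\omega$ from Lemma~\ref{Gomega} into the definition \eqref{DBTF} and compute directly. First I would record, from the definitions in the Notations subsection, the conjugates $\CF_V\CJ\CF_V^*$ and $(\CF_V\otimes\CF_V)\hat W(\CF_V^*\otimes\CF_V^*)$: since $\CJ=J\hat J$ acts by $f(g)\mapsto\Delta_G(g)^{-1/2}f(g^{-1})$ and $\hat W$ by $f(g,h)\mapsto f(hg,h)$, these are elementary changes of variables on $Q\times\hat V$ (already implicit in \cite{BGNT3}). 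Then I would take the adjoint of Lemma~\ref{Gomega} to obtain $(\CF_V\otimes\CF_V)\tilde\G_\omega^*(\CF_V^*\otimes\CF_V^*)$, and finally compose the four factors $(\CJ\otimes\CJ)$, $\tilde\G_\omega^*$, $(1\otimes\CJ)$ and $\hat W$.

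The bulk of the work, and the main obstacle, is the bookkeeping of this composition: one must track several nested substitutions of the form $\xi\mapsto\phi^{-1}(\ldots)$ together with the attendant Jacobian and modular factors $|q|_V$, $\Delta_Q(q)$ and $\Delta_G$, and verify that they collapse to the single weight $|\phi^{-1}(\xi_0+\xi_1)|_V^{-1}$. In parallel, the cocycle $\omega$ enters only through the single factor carried by $\tilde\G_\omega^*$; after the substitutions produced by $\hat W$ and the two copies of $\CJ$, this factor becomes a product of values of $\omega$, which I would reduce, using the cocycle relation \eqref{CE} together with the normalization \eqref{NOR}, to the clean form $\omega\big(\phi^{-1}(\xi_0+\xi_1),\phi^{-1}(\xi_0+\xi_2)\big)$. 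Specializing to $\omega=1$ should reproduce the cocycle of \cite{BGNT3}, which serves as a consistency check.

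The embedding statement is then soft. By construction \eqref{DBTF} and \cite[Proposition~2.9]{BGNT3}, $\Omega_\omega$ is a dual unitary $2$-cocycle whose associated $G$-Galois object is $(B(\CH),\Ad\pi_\omega)$; this object is $I$-factorial by Proposition~\ref{TI}, so $W^*(\hat G,\Omega_\omega)\simeq B(\CH)$ is a type $I$ factor and $[\Omega_\omega]\in H^2_I(\hat G,\mathbb T)$. Moreover, the projective representation attached to $(W^*(\hat G,\Omega_\omega),\Ad\rho)$ is $\pi_\omega$, whose group $2$-cocycle is exactly $\omega$, viewed on $G$ via \eqref{HQHG}; hence the embedding \eqref{Emb1} sends $[\Omega_\omega]$ to the inflation of $[\omega]$.

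Finally I would combine these facts. The composite $H^2(Q,\mathbb T)\to H^2_I(\hat G,\mathbb T)\to H^2_I(G,\mathbb T)$ obtained from $[\omega]\mapsto[\Omega_\omega]$ followed by the embedding \eqref{Emb1} is precisely the inflation homomorphism \eqref{HQHG}, which is injective. Since \eqref{Emb1} is itself injective, if $[\omega]=[\omega']$ then their inflations agree and $[\Omega_\omega]=[\Omega_{\omega'}]$, so $\omega\mapsto[\Omega_\omega]$ factors through $H^2(Q,\mathbb T)$; and conversely, if $[\Omega_\omega]=[\Omega_{\omega'}]$ then applying \eqref{Emb1} and invoking injectivity of inflation yields $[\omega]=[\omega']$. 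Thus $[\omega]\mapsto[\Omega_\omega]$ is a well-defined injection, that is, an embedding $H^2(Q,\mathbb T)\hookrightarrow H^2_I(\hat G,\mathbb T)$.
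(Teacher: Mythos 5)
Your proposal is correct. The computational half is essentially the paper's own approach: conjugate by $\CF_V\otimes\CF_V$, use the formulas for $\CJ$ and $\hat W^*$ quoted from \cite{BGNT3}, feed in Lemma \ref{Gomega}, and track the Jacobian, modular and $\omega$-factors; the only cosmetic difference is that the paper computes $\Omega_\omega^*=\hat W^*(1\otimes\CJ)\tilde\G_\omega(\CJ\otimes\CJ)$, so that Lemma \ref{Gomega} can be used un-adjointed, and passes to the adjoint only at the very end. (Incidentally, at that last step the collapse of the $\omega$-factor to $\omega\big(\phi^{-1}(\xi_0+\xi_1),\phi^{-1}(\xi_0+\xi_2)\big)$ comes from the group-law identity $\phi^{-1}(\xi_0+\xi_1)\phi^{-1}(\xi_0+\xi_2)=\phi^{-1}\big(\xi_0+\xi_1+\phi^{-1}(\xi_0+\xi_1)^\flat\xi_2\big)$ rather than from a further application of \eqref{CE}, which was already spent inside Lemma \ref{Gomega}; this is a detail of bookkeeping, not a gap.) Where you genuinely diverge is the well-definedness of $\tau\colon[\omega]\mapsto[\Omega_\omega]$. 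The paper proves it by direct computation: for $\omega'=\frac{u\otimes u}{\Delta u}\,\omega$ it exhibits, using the freshly derived formula, the explicit unitary $\lambda(\tilde u)$ with $\tilde u(q,v)=\delta_e(q)\,\CF_V^*(u\circ\eta)(v)$, $\eta(\xi)=\phi^{-1}(\xi_0+\xi)$, satisfying $\Omega_{\omega'}=(\lambda(\tilde u)\otimes\lambda(\tilde u))\,\Omega_\omega\,\hat\Delta(\lambda(\tilde u))^*$. You instead observe that $\mu\circ\tau=\iota$ holds cocycle-wise (with $\mu$ the embedding \eqref{Emb1} and $\iota$ the inflation \eqref{HQHG}), by Proposition \ref{TI} and \cite[Proposition 2.9]{BGNT3}, and then extract both well-definedness and injectivity of $\tau$ from the injectivity of $\mu$ and of $\iota$; the injectivity step coincides with the paper's commutative-triangle argument, but your well-definedness step is softer. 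What you gain is the elimination of a computation; what you give up is independence from the injectivity of \eqref{Emb1}, a nontrivial input from the Galois-object machinery (non-cohomologous dual cocycles yield non-isomorphic Galois objects) which the paper's explicit coboundary-transport argument does not require, and which has the side benefit of showing concretely how a coboundary of $\omega$ becomes a coboundary of $\Omega_\omega$. Both routes are valid given the background results the paper quotes.
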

Recall that $H_I^2(\hat G,\mathbb T)$ denotes the set of equivalence classes of dual unitary $2$-cocycles $\Omega$ such that
the twisted dual group von Neumann algebra $W^*(\hat G,\Omega)$ is a type I factor.  

\begin{proof}[Proof of Theorem \ref{OmegaF}]
Starting from  the relations (see \cite[proof of Theorem 3.26]{BGNT3}):
\begin{align}
\label{JJF}
(\CF_V \CJ\CF_V^*\,f)(q,\xi)=|q|_V\,\Delta_G^{-1/2}(q,0)\,f\big(q^{-1},-{q^{-1}}^\flat\xi\big),
\end{align}
and
\begin{equation}\label{WF}
\big((\CF_V\otimes \CF_V)\hat W^* (\CF_V^*\otimes \CF_V^*)f\big)(q_1,\xi_1;q_2,\xi_2)
= |q_2|_V\, f\big(q_2^{-1}q_1,{q_2^{-1}}^\flat\xi_1;q_2,\xi_1+\xi_2\big),
\end{equation}
we get from Lemma \ref{Gomega}
\begin{align*}
&\big((\CF_V\otimes\CF_V)\Omega_\omega^* (\CF_V^*\otimes\CF_V^*)f\big)(q_1,\xi_1;q_2,\xi_2)\\
&=(\CF_V\otimes \CF_V)\hat W^* (1\otimes\CJ)\tilde\G_\omega(\CJ\otimes\CJ)(\CF_V^*\otimes \CF_V^*)f(q_1,\xi_1;q_2,\xi_2)\\
&=|q_2|_V\,(\CF_V\otimes \CF_V) (1\otimes\CJ)\tilde\G_\omega(\CJ\otimes\CJ)(\CF_V^*\otimes \CF_V^*)
f\big(q_2^{-1}q_1,{q_2^{-1}}^\flat\xi_1;q_2,\xi_1+\xi_2\big)\\
&=|q_2|_V^2\,\Delta_G^{-1/2}(q_2,0)\,(\CF_V\otimes \CF_V) \tilde\G_\omega(\CJ\otimes\CJ)(\CF_V^*\otimes \CF_V^*)
f\big(q_2^{-1}q_1,{q_2^{-1}}^\flat\xi_1;q_2^{-1},-{q_2^{-1}}^\flat\xi_1-{q_2^{-1}}^\flat\xi_2\big)\\
&=|q_1q_2|_V\,\Delta_G^{-1/2}(q_1q_2,0)\, \Delta_G^{1/2}\big(\phi^{-1}(\xi_0+\xi_1),0\big)\,
 \overline\omega\big(\phi^{-1}(\xi_0+\xi_1),\phi^{-1}(\xi_0+\xi_1)^{-1}\phi^{-1}(\xi_0+\xi_1+\xi_2)\big) \\
&\hspace{3.4cm}(\CF_V\otimes \CF_V) (\CJ\otimes\CJ)(\CF_V^*\otimes \CF_V^*)
f\big(q_1^{-1},-{q_1^{-1}}^\flat\xi_1;q_2^{-1}\phi^{-1}(\xi_0+\xi_1),-{q_2^{-1}}^\flat\xi_2\big)\\
&=|\phi^{-1}(\xi_0+\xi_1)|_V\,
  \overline\omega\big(\phi^{-1}(\xi_0+\xi_1),\phi^{-1}(\xi_0+\xi_1)^{-1}\phi^{-1}(\xi_0+\xi_1+\xi_2)\big)\\
&\hspace{8cm}
f\big(q_1,\xi_1;\phi^{-1}(\xi_0+\xi_1)^{-1}q_2,{\phi^{-1}(\xi_0+\xi_1)^{-1}}^\flat\xi_2\big).
\end{align*}
Passing to the adjoint (taking care of the fact that we work with the measure $|q|_V^{-1}dq d\xi $),
we get the announced formula.

For the remaining part of the statement, consider $\eta: \hat V\to Q$ and   $\mu:\hat V\times\hat V\to\hat V\times\hat V$,  defined by
$\eta(\xi)=\phi^{-1}(\xi_0+\xi)$ and by
$\mu(\xi_1, \xi_2):=\big( \xi_1 ,\eta(\xi_1)^\flat\xi_2\big)$.
Denote by $\Delta $  the coproduct of $L^\infty(Q)$, $L^\infty(\hat V)$...
(not to be confused with the modular functions) and by $\hat\Delta$, the one of $W^*(V)$, $W^*(G)$...
An easy computation shows that for $u\in L^\infty(Q)$, we have:
$$
(\Delta u)\circ(\eta\times \eta )\circ \mu^{-1}=\Delta(u\circ\eta).
$$
Hence,  if $\omega'=\frac{u\otimes u}{\Delta u}\,\omega$, with $u\in L^\infty(Q)$ unitary,  then we get:
$$
(\CF_V\otimes\CF_V)\Omega_{\omega'} (\CF_V^*\otimes\CF_V^*)=( u\circ\eta\otimes u\circ\eta)(\CF_V\otimes\CF_V)\Omega_\omega (\CF_V^*\otimes\CF_V^*)
\Delta( u\circ\eta)^*.
$$
Since, for $f\in L^\infty(\hat V)$, we have $(\CF_V^*\otimes\CF_V^*)\Delta(f)(\CF_V\otimes\CF_V)=\hat\Delta(\lambda(\CF_V^* f))$,
we deduce
$$
\Omega_{\omega'} =(\lambda(\tilde u)\otimes \lambda(\tilde u))\,\Omega_\omega\,\hat\Delta(\lambda(\tilde u))^*
\quad
\mbox{where}\quad
\tilde u(q,v):=\delta_e(q)\,\CF_V^* (u\circ \eta)(v).
$$
Therefore, we have a well defined map $\tau :[\omega]\in H^2(Q,\T)\mapsto [\Omega_\omega]\in H^2_I(\hat G,\T)$.
To see that this map is injective, consider the diagram
\begin{align*}
\xymatrix{
H^2_I(\hat G,\T)\ar[r]^\mu&H^2_I( G,\T)\\
H^2( Q,\T)\ar[u]_\tau\ar[ur]^{\iota}&
},
\end{align*}
where $\iota$ is the  inflation homomorphism $H^2( Q,\T)\to H^2( G,\T)$ (which takes values in $H^2_I( G,\T)$ by Proposition \ref{TI}) and
$\mu$ is the embedding \eqref{Emb1}. By \cite[Proposition 2.9]{BGNT3}, the $G$-Galois object defined from $\Omega_\omega$
is isomorphic to $(B(L^2(Q)),\Ad \pi_\omega)$.  This shows that the diagram above is commutative, hence the  injectivity of $\tau$.
\end{proof}

Theorem \ref{OmegaF} shows that the dual unitary  $2$-cocycle $\Omega_\omega$ factorizes as:
$$
\Omega_\omega=U_\omega
\,\Omega,
$$
where $\Omega$ is the dual $2$-cocycle attached with the genuine representation $\pi$ (that is, the dual $2$-cocycle constructed in \cite{BGNT3})
and where $U_\omega$ is the  unitary convolution operator given by:
$$
U_\omega:= (\CF_V^*\otimes\CF_V^*)\,\omega\big(\phi^{-1}(\xi_0+. ),\phi^{-1}(\xi_0+. )\big)\,(\CF_V\otimes\CF_V).
$$
It is therefore natural to ask if there exist other dual unitary  $2$-cocycles of the form
\begin{align}
\label{Theta}
\Omega_\Theta:=(\CF_V^*\otimes\CF_V^*)\,\Theta\,(\CF_V\otimes\CF_V)\,\Omega,
\end{align}
where  $\Theta:\hat V\times \hat V\to\mathbb T$ is a measurable function.
We answer this question negatively showing therefore  that our construction exhausts  all possibilities.

\begin{proposition}
Let $\Theta:\hat V\times \hat V\to\mathbb T$ be a measurable function such that the unitary convolution operator $\Omega_\Theta$ satisfies the dual $2$-cocycle
equation \eqref{eq:dual-cocycle}. Then, there exists a group $2$-cocycle $\theta\in Z^2(Q,\mathbb T)$ such that we have almost everywhere:
$$
\Theta(\xi_1,\xi_2)=\theta\big(\phi^{-1}(\xi_0+\xi_1),\phi^{-1}(\xi_0+\xi_2)\big).
$$
\end{proposition}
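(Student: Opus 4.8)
The plan is to insert the factorization $\Omega_\Theta=U_\Theta\,\Omega$ into the cocycle equation \eqref{eq:dual-cocycle} and to peel off the contribution of $\Omega$, which is itself a dual $2$-cocycle. Here $\Omega$ is the cocycle attached to the genuine representation $\pi$ and $U_\Theta:=(\CF_V^*\otimes\CF_V^*)\,\Theta\,(\CF_V\otimes\CF_V)\in W^*(V)\bar\otimes W^*(V)$ is, in the Fourier picture, multiplication by $\Theta(\xi_1,\xi_2)$. Since $\hat\Delta\otimes\iota$ and $\iota\otimes\hat\Delta$ are normal homomorphisms, I would first write $(\hat\Delta\otimes\iota)(\Omega_\Theta)=(\hat\Delta\otimes\iota)(U_\Theta)\,(\hat\Delta\otimes\iota)(\Omega)$, and likewise on the right. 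The key simplification is that $U_\Theta$ lives in $W^*(V)\bar\otimes W^*(V)$, on which $\hat\Delta$ restricts to the coproduct of $W^*(V)$; under $\CF_V$ this is just co-addition, so $(\hat\Delta\otimes\iota)(U_\Theta)$ and $(\iota\otimes\hat\Delta)(U_\Theta)$ become the multiplication operators by $\Theta(\xi_1+\xi_2,\xi_3)$ and $\Theta(\xi_1,\xi_2+\xi_3)$. Crucially, I never need $(\hat\Delta\otimes\iota)(\Omega)$ itself.

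Working in the Fourier picture and writing $\eta(\xi):=\phi^{-1}(\xi_0+\xi)$, Theorem~\ref{OmegaF} (with $\omega\equiv1$) shows that $\Omega\otimes1$ shifts the second leg by $\xi_2\mapsto\eta(\xi_1)^\flat\xi_2$ (and $q_2\mapsto\eta(\xi_1)q_2$), while $1\otimes\Omega$ shifts the third leg by $\xi_3\mapsto\eta(\xi_2)^\flat\xi_3$. Since a shift operator $T$ with $(Tf)(\underline\xi)=c(\underline\xi)f(S\underline\xi)$ obeys $T\,M_g=M_{g\circ S}\,T$ for any multiplication operator $M_g$, I would commute the two multiplication factors above past the adjacent copy of $\Omega$. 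This turns $\xi_1+\xi_2$ into $\xi_1+\eta(\xi_1)^\flat\xi_2$ and $\xi_2+\xi_3$ into $\xi_2+\eta(\xi_2)^\flat\xi_3$. Introducing the twisted product $\xi\star\xi':=\xi+\eta(\xi)^\flat\xi'$, and using that $\Omega$ satisfies \eqref{eq:dual-cocycle}, i.e. $(\Omega\otimes1)(\hat\Delta\otimes\iota)(\Omega)=(1\otimes\Omega)(\iota\otimes\hat\Delta)(\Omega)=:P$, both sides collapse to a scalar multiple of the fixed unitary $\tilde P=(\CF_V^{\otimes3})P(\CF_V^{*\otimes3})$:
\[
(\CF_V^{\otimes3})\,(\Omega_\Theta\otimes1)(\hat\Delta\otimes\iota)(\Omega_\Theta)\,(\CF_V^{*\otimes3})=M_{\Theta(\xi_1,\xi_2)\,\Theta(\xi_1\star\xi_2,\xi_3)}\,\tilde P,
\]
and symmetrically the right-hand side produces $M_{\Theta(\xi_2,\xi_3)\,\Theta(\xi_1,\xi_2\star\xi_3)}\,\tilde P$. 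Cancelling the invertible $\tilde P$ and comparing multiplication operators yields, almost everywhere,
\[
\Theta(\xi_1,\xi_2)\,\Theta(\xi_1\star\xi_2,\xi_3)=\Theta(\xi_2,\xi_3)\,\Theta(\xi_1,\xi_2\star\xi_3).
\]

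It then remains to read this as a group cocycle identity. The map $\eta$ is a measure class isomorphism $\hat V\to Q$ with $\eta(\xi)^\flat\xi_0=\xi_0+\xi$, and a direct check gives $\eta(\xi\star\xi')=\eta(\xi)\eta(\xi')$; thus $(\hat V,\star)$ is a group and $\eta$ is an isomorphism onto $Q$. Setting $\theta(a,b):=\Theta\big(\eta^{-1}(a),\eta^{-1}(b)\big)=\Theta\big(a^\flat\xi_0-\xi_0,\,b^\flat\xi_0-\xi_0\big)$, the displayed identity is exactly the relation \eqref{CE} for $\theta$ on $Q$; measurability of $\theta$ follows from that of $\Theta$ together with the continuity of $\eta^{-1}$, so $\theta\in Z^2(Q,\T)$. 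Undoing the substitution gives $\Theta(\xi_1,\xi_2)=\theta\big(\phi^{-1}(\xi_0+\xi_1),\phi^{-1}(\xi_0+\xi_2)\big)$, as claimed.

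The main obstacle is not conceptual but lies in the rigorous bookkeeping of these operator manipulations: one must justify splitting the coproducts on the weak (integral) expression for $\Omega_\Theta$, confirm that $\hat\Delta$ really restricts to co-addition on $W^*(V)$ under $\CF_V$ with the correct orientation, and—most delicately—track the shifts so that the twisted addition $\star$ lands in precisely the right slots on each side. A useful consistency check is that $\Theta_\omega(\xi_1,\xi_2)=\omega(\eta(\xi_1),\eta(\xi_2))$ solves the above functional equation exactly when $\omega\in Z^2(Q,\T)$, which recovers the content of Theorem~\ref{OmegaF}.
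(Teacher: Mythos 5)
Your proposal is correct and follows essentially the same route as the paper: factor $\Omega_\Theta=U_\Theta\,\Omega$, use that $\hat\Delta$ restricts to co-addition on $W^*(V)$ in the Fourier picture and that $\Omega$ acts there as a shift, so the cocycle equation reduces to the functional equation $\Theta(\xi_1,\xi_2)\,\Theta(\xi_1\star\xi_2,\xi_3)=\Theta(\xi_2,\xi_3)\,\Theta(\xi_1,\xi_2\star\xi_3)$, which is then transported to $Q$ via $\phi^{-1}(\xi_0+\cdot)$. The paper's proof is the same computation: its $\Theta'$ and $\Theta''$ are exactly your two sides, its substitution $\theta(q_1,q_2)=\Theta(q_1^\flat\xi_0-\xi_0,q_2^\flat\xi_0-\xi_0)$ is your definition of $\theta$, and its key identity $\phi^{-1}\big(\xi_0+\xi_1+\phi^{-1}(\xi_0+\xi_1)^\flat\xi_2\big)=\phi^{-1}(\xi_0+\xi_1)\,\phi^{-1}(\xi_0+\xi_2)$ is precisely your $\eta(\xi_1\star\xi_2)=\eta(\xi_1)\eta(\xi_2)$.
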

\bp
Observe that for $u\in L^\infty(\hat V)$,  we  have
$$
\hat\Delta\big(\CF_V^*\,u\,\CF_V)=(\CF_V^*\otimes\CF_V^*)\,\Delta u\,(\CF_V\otimes\CF_V).
$$
From this we get that
\begin{align*}
&(\CF_V\otimes\CF_V\otimes\CF_V)(\Omega_\Theta\otimes1)(\Dhat\otimes\iota)(\Omega_\Theta)(\CF_V^*\otimes\CF_V^*\otimes\CF_V^*)=\\
&\qquad\qquad\qquad\qquad\qquad\qquad \Theta'
\,(\CF_V\otimes\CF_V\otimes\CF_V)(\Omega\otimes1)(\Dhat\otimes\iota)(\Omega)(\CF_V^*\otimes\CF_V^*\otimes\CF_V^*),
\end{align*}
where $\Theta':\hat V\times \hat V\times \hat V\to\mathbb T$ is the measurable function given by:
$$
\Theta'(\xi_1,\xi_2,\xi_3):=\Theta(\xi_1,\xi_2)\,\Theta\big(\xi_1+\phi^{-1}(\xi_0+\xi_1)^\flat\xi_2,\xi_3\big).
$$
Similarly, we have
\begin{align*}
&(\CF_V\otimes\CF_V\otimes\CF_V)(1\otimes\Omega_\Theta)(\iota\otimes\Dhat)(\Omega_\Theta)(\CF_V^*\otimes\CF_V^*\otimes\CF_V^*)=\\
&\qquad\qquad\qquad\qquad\qquad\qquad \Theta''
\,(\CF_V\otimes\CF_V\otimes\CF_V)(1\otimes\Omega)(\iota\otimes\Dhat)(\Omega)(\CF_V^*\otimes\CF_V^*\otimes\CF_V^*),
\end{align*}
where $\Theta'':\hat V\times \hat V\times \hat V\to\mathbb T$ is the measurable function given by:
$$
\Theta''(\xi_1,\xi_2,\xi_3):=\Theta(\xi_2,\xi_3)\,\Theta\big(\xi_1,\xi_2+\phi^{-1}(\xi_0+\xi_2)^\flat\xi_3\big).
$$
Therefore, we deduce that $\Omega_\Theta$ is a dual $2$-cocycle if and only if $\Theta'=\Theta''$.

Define  the measurable function $\theta: Q\times Q\to\mathbb T$ as follows:
$$
\theta(q_1,q_2):=\Theta(q_1^\flat\xi_0-\xi_0,q_2^\flat\xi_0-\xi_0).
$$
In terms of $\theta$, the equation $\Theta'=\Theta''$ is equivalent to:
\begin{align*}
&\theta\big(\phi^{-1}(\xi_0+\xi_1),\phi^{-1}(\xi_0+\xi_2)\big)\theta\big(\phi^{-1}(\xi_0+\xi_1+\phi^{-1}(\xi_0+\xi_1)^\flat\xi_2),\phi^{-1}(\xi_0+\xi_3)\big)=\\
&\qquad\qquad\qquad\theta\big(\phi^{-1}(\xi_0+\xi_2),\phi^{-1}(\xi_0+\xi_3)\big)
\theta\big(\phi^{-1}(\xi_0+\xi_1),\phi^{-1}(\xi_0+\xi_2+\phi^{-1}(\xi_0+\xi_2)^\flat\xi_3)\big).
\end{align*}
Since
$$
\phi^{-1}(\xi_0+\xi_1+\phi^{-1}(\xi_0+\xi_1)^\flat\xi_2)=
\phi^{-1}(\xi_0+\xi_1)\phi^{-1}(\xi_0+\xi_2),
$$
we deduce from our dual orbit condition that $\theta$ satisfies  the  $2$-cocycle equation \eqref{CE} a.e.
\ep

\subsection{The star-product}
\label{TSP}
This section is a preparation to Proposition \ref{MW} where
we determine the modular involution of the canonical weight $\vf$ of the $G$-Galois object  $(W^*(\hat G, \Omega_\omega),\Ad\rho)$.
Our main task is to show that the products \eqref{starOmega} and \eqref{star-prod} coincide on a sufficiently large domain.

So, consider the associative product $\star_\omega$ defined on $L^2(G)$ by:
$$
f_1\star_\omega f_2:=\Op_\omega^*\big(\!\Op_\omega(f_1)\Op_\omega(f_2)\big).
$$
We first give an integral formula for this product. 
To this aim, we let  $\CF \CL(G)$ be  the dense subspace of $L^2(G)$,
consisting in functions $f$  such that $\CF_Vf$ is essentially bounded and essentially zero outside of a
set of finite measure.
\begin{lemma}
\label{LSPF1}
The space $\CF \CL(G)$ is an algebra for the product $\star_\omega$ and
 for any $f_1,f_2\in\CF\CL(G)$ we have almost everywhere:
 \begin{multline}
  \label{SPF1}
\CF_V (f_1\star_\omega f_2)(q,\xi)=
\int_{\hat V}\overline\omega\big(\phi^{-1}(\xi_0-{q^{-1}}^\flat\xi'),\phi^{-1}(\xi_0-{q^{-1}}^\flat\xi')^{-1}\phi^{-1}(\xi_0-{q^{-1}}^\flat\xi)\big)\\
(\CF_V f_1)(q,\xi')
(\CF_V f_2)\big(\phi^{-1}(q^\flat\xi_0-\xi'),\xi-\xi'\big)\,d\xi '.
\end{multline}
\end{lemma}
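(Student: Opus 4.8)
The plan is to use that $\Op_\omega\colon L^2(G)\to\HS(\CH)$ is unitary (Proposition~\ref{UEQ}), so that $\star_\omega$ is just the Hilbert--Schmidt product transported by $\Op_\omega$. The defining identity $f_1\star_\omega f_2=\Op_\omega^*(\Op_\omega(f_1)\Op_\omega(f_2))$ is then equivalent to $\Op_\omega(f_1\star_\omega f_2)=\Op_\omega(f_1)\Op_\omega(f_2)$. Since each $\Op_\omega(f_i)$ is the kernel operator on $\CH=L^2(Q,|q|_V^{-1}dq)$ with kernel $K_\omega(f_i)$ of \eqref{Komega}, the product is again a kernel operator and the kernels compose:
$$
K_\omega(f_1\star_\omega f_2)(q,\tilde q)=\int_Q K_\omega(f_1)(q,q')\,K_\omega(f_2)(q',\tilde q)\,\frac{dq'}{|q'|_V}.
$$
First I would rewrite \eqref{Komega} using $(\CF_Vf)(\xi)=(\CF_{\hat V}^*f)(-\xi)$ as $K_\omega(f)(q,\tilde q)=\overline\omega(q,q^{-1}\tilde q)\,(\CF_Vf)(q,q^\flat\xi_0-\tilde q^\flat\xi_0)$, and insert it for $f_1$ and $f_2$, producing two cocycle factors $\overline\omega(q,q^{-1}q')$ and $\overline\omega(q',q'^{-1}\tilde q)$.

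Next I would invert this kernel formula to recover $\CF_V(f_1\star_\omega f_2)$: putting $\tilde q=\phi^{-1}(q^\flat\xi_0-\xi)$ gives $\CF_V(f_1\star_\omega f_2)(q,\xi)=\omega(q,q^{-1}\tilde q)\,K_\omega(f_1\star_\omega f_2)(q,\tilde q)$, with the new factor $\omega(q,q^{-1}\tilde q)$. I would then change the integration variable from $q'\in Q$ to $\xi'\in\hat V$ via $\xi'=q^\flat\xi_0-q'^\flat\xi_0$, i.e$.$ $q'=\phi^{-1}(q^\flat\xi_0-\xi')$; by the dual orbit condition and the change of variable formula \eqref{CVF} this turns $\frac{dq'}{|q'|_V}$ into $d\xi'$ and converts the two Fourier factors into $(\CF_Vf_1)(q,\xi')$ and $(\CF_Vf_2)(\phi^{-1}(q^\flat\xi_0-\xi'),\xi-\xi')$, matching \eqref{SPF1}. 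The only structural input here is that $q\mapsto q^\flat$ is a genuine left action, so that $\phi(q^{-1}q')={q^{-1}}^\flat\phi(q')$; this yields the identifications $q^{-1}q'=\phi^{-1}(\xi_0-{q^{-1}}^\flat\xi')$ and $q^{-1}\tilde q=\phi^{-1}(\xi_0-{q^{-1}}^\flat\xi)$ needed to read off the arguments of $\omega$.

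The computational core is the collapse of the three surviving cocycle factors. Writing $a:=q^{-1}q'=\phi^{-1}(\xi_0-{q^{-1}}^\flat\xi')$ and $b:=q^{-1}\tilde q=\phi^{-1}(\xi_0-{q^{-1}}^\flat\xi)$, so that $q'=qa$ and $q'^{-1}\tilde q=a^{-1}b$, the accumulated factor is $\omega(q,b)\,\overline\omega(q,a)\,\overline\omega(qa,a^{-1}b)$. Applying the cocycle relation \eqref{CE} to the triple $(q,a,a^{-1}b)$ gives $\omega(q,a)\,\omega(qa,a^{-1}b)=\omega(a,a^{-1}b)\,\omega(q,b)$, so the factor reduces to $\overline\omega(a,a^{-1}b)$, which is precisely the cocycle in \eqref{SPF1}. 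This collapse is a direct use of \eqref{CE}, but it requires the most careful bookkeeping of arguments.

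To complete the statement I would check that $\CF\CL(G)$ is stable under $\star_\omega$ and that the integral manipulations are licit. In the coordinates $(q,\eta)$ with $\eta=q^\flat\xi_0-\tilde q^\flat\xi_0$, the kernel $K_\omega(f_i)$ equals $\CF_Vf_i$ up to the unimodular cocycle, hence is bounded with support of finite measure; absolute convergence of both the composition integral and the $\xi'$-integral then follows from Cauchy--Schwarz together with the finite-measure supports, which also legitimizes Fubini. Tracking these supports through the change of variable shows that $\CF_V(f_1\star_\omega f_2)$ is again essentially bounded with essentially finite-measure support, i.e$.$ $f_1\star_\omega f_2\in\CF\CL(G)$. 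I expect this last, measure-theoretic, bookkeeping to be the main obstacle to a fully rigorous write-up, the algebraic identity \eqref{SPF1} itself being a mechanical (if lengthy) computation.
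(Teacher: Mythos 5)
Your proof is correct, but it follows a genuinely different route from the paper's. The paper does not touch the kernel calculus at all: it invokes Lemma \ref{Vomega} to write $\Op_\omega(f)=\Op(\mathcal V_\omega f)$, hence $f_1\star_\omega f_2=\mathcal V_\omega^*\big((\mathcal V_\omega f_1)\star(\mathcal V_\omega f_2)\big)$, then quotes the untwisted convolution formula of \cite[Lemma 3.9]{BGNT3} for $\star$ and simply tracks the three $\omega$-factors created by the $\mathcal V_\omega$-conjugation (one from undoing $\mathcal V_\omega^*$, one from each $\mathcal V_\omega f_j$). You instead recompose the Hilbert--Schmidt kernels of Proposition \ref{UEQ} directly, invert the kernel formula, and change variables $q'\mapsto\xi'$ via \eqref{CVF} --- in effect reproving the BGNT3 lemma in the twisted setting rather than citing it. Pleasingly, the two computations converge to the exact same algebra: with $a=\phi^{-1}(\xi_0-{q^{-1}}^\flat\xi')$ and $b=\phi^{-1}(\xi_0-{q^{-1}}^\flat\xi)$, both proofs accumulate the factor $\omega(q,b)\,\overline\omega(q,a)\,\overline\omega(qa,a^{-1}b)$ and collapse it to $\overline\omega(a,a^{-1}b)$ by applying \eqref{CE} to the same triple $(q,a,a^{-1}b)$; your bookkeeping of the identifications $q^{-1}q'=a$, $q'^{-1}\tilde q=a^{-1}b$ is accurate. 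What the paper's route buys is brevity and the $\mathcal V_\omega$-twisting picture, which it reuses later (e.g$.$ $L^{\star_\omega}(f)=\mathcal V_\omega^*L^\star(\mathcal V_\omega f)\mathcal V_\omega$ in the analysis of $W^*(\hat G,\Omega_\omega)$); what yours buys is self-containedness, recovering the $\omega=1$ case as a by-product. One remark on the analytic tail end: the closure of $\CF\CL(G)$ under $\star_\omega$ and the attendant boundedness/support claims, which you flag as the delicate part and justify by Cauchy--Schwarz and support tracking, are handled in the paper by a single sentence (boundedness and finite-measure support of the right-hand side of \eqref{SPF1} are asserted, not proved), so on this point your write-up is no less detailed than the paper's own proof --- indeed your acknowledgment that this step needs care is the more candid statement; note also that absolute convergence of the kernel composition needs only that the kernels are square-integrable, so Cauchy--Schwarz alone suffices there, with the finite-measure supports entering only in the closure claim.
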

\bp
Note first that if  $\CF_Vf_1$ and $\CF_Vf_2$ are essentially bounded and essentially zero outside  a set of finite measure, then
the same is true for the RHS of \eqref{SPF1}. Therefore, if the equality \eqref{SPF1} holds, then we will deduce that $(\CF \CL(G),\star_\omega)$
is an algebra.

For $f_1,f_2\in L^2(G)$ we have by Lemma \ref{Vomega}:
$$
f_1\star_\omega f_2=\mathcal V_\omega ^*\big((\mathcal V_\omega f_1)\star (\mathcal V_\omega f_2)\big),
$$
so that:
$$
\CF_V (f_1\star_\omega f_2)(q,\xi)=\omega\big(q,\phi^{-1}(\xi_0-{q^{-1}}^\flat\xi)\big) \,\CF_V\big((\mathcal V_\omega f_1)\star (\mathcal V_\omega f_2)\big)(q,\xi).
$$
For $f_1,f_2\in \CF \CL(G)$, by \cite[Lemma 3.9]{BGNT3}   we have almost everywhere:
\begin{equation*}
\CF_V(f_1\star f_2)(q,\xi)=\int_{\hat V}
(\CF_V f_1)(q,\xi')\,(\CF_V f_2)\big(\phi^{-1}(q^\flat\xi_0-\xi'),\xi-\xi'\big)\,d\xi '.
\end{equation*}
Clearly, the unitary $\mathcal V_\omega$ is an automorphism of $\CF \CL(G)$. We therefore get

\begin{multline*}
\CF_V (f_1\star_\omega f_2)(q,\xi)=\omega\big(q,\phi^{-1}(\xi_0-{q^{-1}}^\flat\xi)\big)\int_{\hat V}
\overline\omega\big(q,\phi^{-1}(\xi_0-{q^{-1}}^\flat\xi')\big)(\CF_V f_1)(q,\xi')\\
\overline\omega\big(q\phi^{-1}(\xi_0-{q^{-1}}^\flat\xi'),\phi^{-1}(\xi_0-{q^{-1}}^\flat\xi')^{-1}\phi^{-1}(\xi_0-{q^{-1}}^\flat\xi)\big)(\CF_V f_2)\big(\phi^{-1}(
q^\flat\xi_0-\xi'),\xi-\xi'\big)\,d\xi '.
\end{multline*}
The  cocycle identity applied to $(q,\phi^{-1}(\xi_0-{q^{-1}}^\flat\xi'),\phi^{-1}(\xi_0-{q^{-1}}^\flat\xi')^{-1}\phi^{-1}(\xi_0-{q^{-1}}^\flat\xi))$ gives the desired formula.
\ep

Recall that the Fourier algebra $A(G)$ consists of functions of the form $\zeta\ast \check \zeta'$ with $\zeta,\zeta'\in L^2(G)$ and,  as a Banach space,
$A(G)$ identifies with the predual $W^*(G)_*$ via the map $\zeta\ast \check \zeta'\mapsto \langle \bar\zeta,\cdot\,\zeta'\rangle$.

\begin{proposition}
\label{ESP}
The space $A(G)\cap L^2(G)$ is an algebra for the product $\star_\omega$. Moreover,  we have
 for all $f_1,f_2\in A(G)\cap L^2(G)$ and all $g\in G$:
\begin{align}
\label{SES}
f_1\star_\omega f_2(g)=(f_1\otimes f_2)\big(\hat\Delta(\lambda_g)\Omega_\omega^*\big).
\end{align}
\end{proposition}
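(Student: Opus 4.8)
The statement \eqref{SES} asserts that on $A(G)\cap L^2(G)$ the transported Hilbert--Schmidt product $\star_\omega$ of \eqref{star-prod} coincides with the dual-cocycle convolution product $\star_{\Omega_\omega}$ attached by \eqref{starOmega} to the cocycle $\Omega_\omega$ of Theorem \ref{OmegaF}; as recalled after \eqref{star-prod} (see \cite[Remark 2.11]{BGNT3}), such a coincidence is not automatic, so the whole point is to pin down a domain on which it holds. The plan is to treat the two assertions separately: first the closure of $A(G)\cap L^2(G)$ under $\star_\omega$, which is soft, and then the identity \eqref{SES}, which I would obtain by an explicit computation on a convenient dense subspace followed by a continuity argument.

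For the closure, if $f_1,f_2\in A(G)\cap L^2(G)$ then $\Op_\omega(f_1)$ and $\Op_\omega(f_2)$ are Hilbert--Schmidt, so $\Op_\omega(f_1)\Op_\omega(f_2)$ is trace class, in particular Hilbert--Schmidt; hence $f_1\star_\omega f_2=\Op_\omega^*\big(\Op_\omega(f_1)\Op_\omega(f_2)\big)$ already lies in $L^2(G)$. To see that it also lies in $A(G)$, I would use that $\Op_\omega$ intertwines $\lambda$ with $\Ad\pi_\omega$, so that $\Op_\omega^*$ carries a rank-one operator $|\eta\rangle\langle\xi|$ to a matrix coefficient of $\pi_\omega\otimes\pi_\omega^c\simeq\lambda$, an element of $A(G)$ of norm controlled by $\|\eta\|\,\|\xi\|$. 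Writing the trace-class operator $\Op_\omega(f_1)\Op_\omega(f_2)$ as an $\ell^1$-summable sum of rank-one operators then exhibits $f_1\star_\omega f_2$ as an absolutely convergent series in $A(G)$, and yields at the same time the estimate $\|f_1\star_\omega f_2\|_{A(G)}\le\|f_1\|_2\,\|f_2\|_2$.

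For the identity \eqref{SES} I would first restrict to $f_1,f_2$ in a dense subspace $\mathcal D\subset A(G)\cap L^2(G)$ chosen inside $\CF\CL(G)$ (so that Lemma \ref{LSPF1} applies) and small enough that all integrals below are absolutely convergent, for instance functions whose partial Fourier transform $\CF_Vf$ is bounded with support of finite measure. On the one hand, Lemma \ref{LSPF1} gives $\CF_V(f_1\star_\omega f_2)$ explicitly. On the other hand, writing $\hat\Delta(\lambda_g)=\lambda_g\otimes\lambda_g$ and using the elementary pairing $\langle f,\lambda_{g'}\rangle=f(g')$ valid for $f\in A(G)$ with our conventions, I would expand the right-hand side of \eqref{SES} as an absolutely convergent double integral by inserting the weak-integral form of $\Omega$ together with the factorization $\Omega_\omega=U_\omega\,\Omega$; after applying $\CF_V$, performing the change of variables governed by $\phi$ and collecting the resulting $\omega$-factors, this should reproduce, term by term, the kernel of Lemma \ref{LSPF1}.

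The extension to all of $A(G)\cap L^2(G)$ is then a continuity argument: the left-hand side is continuous for the $L^2\times L^2$ topology with values in $A(G)$ by the estimate above, while the right-hand side, being the product of the Banach algebra $(A(G),\star_{\Omega_\omega})$, is continuous for the $A(G)\times A(G)$ topology; approximating $f_1,f_2$ by elements of $\mathcal D$ simultaneously in the $L^2$- and the $A(G)$-norms lets both sides pass to the limit in $A(G)$, hence pointwise. I expect the main obstacle to be precisely the bookkeeping in the third step: matching the cocycle arguments $\omega\big(\phi^{-1}(\xi_0+\cdot),\dots\big)$ and the substitutions of Theorem \ref{OmegaF} against those of Lemma \ref{LSPF1}, while keeping every integral absolutely convergent so that Fubini and the $A(G)$-pairing are legitimate and so that $\mathcal D$ is genuinely dense for both norms. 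It is here, rather than in the algebraic identity itself, that the non-automatic nature of the coincidence noted in \cite[Remark 2.11]{BGNT3} has to be confronted.
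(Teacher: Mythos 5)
The computational core of your plan (expanding the right-hand side of \eqref{SES} with the explicit Fourier-side formula for $\Omega_\omega^*$, changing variables, and matching the result against Lemma \ref{LSPF1}) is essentially the paper's computation. But your ``soft'' step is wrong, and the error is fatal to the way you organize the proof. You claim that $\Op_\omega^*$ sends a rank-one operator $|\eta\rangle\langle\xi|$ into $A(G)$ with $\|\Op_\omega^*(|\eta\rangle\langle\xi|)\|_{A(G)}\le\|\eta\|\,\|\xi\|$, hence that $f_1\star_\omega f_2\in A(G)$ with $\|f_1\star_\omega f_2\|_{A(G)}\le\|f_1\|_2\,\|f_2\|_2$. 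Notice that this argument never uses the hypothesis $f_j\in A(G)$: it would show that the symbol of \emph{every} trace-class operator lies in $A(G)$, i.e.\ that $\Op_\omega^*$ maps the trace class boundedly into $A(G)$; by duality this amounts to a bounded extension of the quantization map to all of $W^*(G)\to B(\CH)$. This is false already in the model situation the construction mimics: for the Kohn--Nirenberg/Weyl calculus the symbol of $|\eta\rangle\langle\xi|$ is, up to a partial Fourier transform and a phase, the ambiguity function $g\mapsto\langle\eta,\pi(g)\xi\rangle$, and it is a standard fact of time-frequency analysis that ambiguity functions of general $L^2$ vectors are \emph{not} integrable (their integrability singles out the Feichtinger algebra, a proper dense subspace of $L^2$). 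The same phenomenon occurs here: $\Op_\omega^*$ sends rank-one operators to coefficient-type functions of the \emph{projective} representation $\pi_\omega$, which are square-integrable but in general not in $A(G)$. Consequently both your closure argument and the $A(G)$-valued continuity invoked in your final extension step (``the left-hand side is continuous for the $L^2\times L^2$ topology with values in $A(G)$'') are unavailable.

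The paper's route avoids this entirely: the membership $f_1\star_\omega f_2\in A(G)$ is not established beforehand, it is \emph{deduced from} the identity \eqref{SES}, whose right-hand side is manifestly in $A(G)$---it is the coefficient function of the normal functional $(f_1\otimes f_2)\big((\cdot)\,\Omega_\omega^*\big)\circ\hat\Delta$ on $W^*(G)$---and it is precisely here that the hypothesis $f_1,f_2\in A(G)$ is indispensable. The identity itself is proved directly for all $f_1,f_2\in A(G)\cap L^2(G)$, not on a dense subspace: writing $f_j=\zeta_j\ast\check\zeta_j'$, the pairing with $\Omega_\omega^*$ becomes an integral that converges absolutely, uniformly in $g$, with bound $\|f_1\|_{A(G)}\|f_2\|_{A(G)}$; after Fubini and changes of variables one recognizes the kernel of Lemma \ref{LSPF1}, the only density argument needed being that $C_c(G)\ast C_c(G)$ is dense in $A(G)\cap L^2(G)$ simultaneously in both norms (used to prove the convolution formula for $\CF_V(\vf\ast\check\vf')$). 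Your dense-subspace-plus-limit scheme for \eqref{SES} could be repaired---use $L^2$-convergence of the left side and sup-norm convergence of the right side to get equality almost everywhere---but the closure statement cannot be rescued as you argue it: it has to come out of \eqref{SES}, not precede it.
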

\bp
Let $f_j\in A(G)$, $j=1,2$. This means that there exist $\zeta_j,\zeta_j'\in L^2(G)$, $j=1,2$, such that $f_j=\zeta_j\ast \check \zeta_j'$. Under the identification of
$A(G)$ with $W^*(G)_*$, we then have:
$$
(f_1\otimes f_2)\big(\hat\Delta(\lambda_g)\Omega_\omega^*\big)=\big\langle \lambda_{g^{-1}} \overline{\zeta_1} \otimes\lambda_{g^{-1}} \overline{\zeta_2} ,
\Omega_\omega^*(\zeta_1 '\otimes\zeta_2' )\big\rangle.
$$
In the proof of Theorem \ref{OmegaF} we have already given the formula for $(\CF_V\otimes\CF_V)\Omega_\omega^*(\CF_V^*\otimes\CF_V^*)$.
This formula entails that:
\begin{multline}
\label{EQ1}
(f_1\otimes f_2)\big(\hat\Delta(\lambda_g)\Omega_\omega^*\big)=\int |\phi^{-1}(\xi_0+\xi_1)|_V\,
\overline\omega\big(\phi^{-1}(\xi_0+\xi_1),\phi^{-1}(\xi_0+\xi_1)^{-1}\phi^{-1}(\xi_0+\xi_1+\xi_2)\big)\\
\overline{(\CF_V\lambda_{g^{-1}} \overline{\zeta_1} )}(q_1,\xi_1)\,\overline{(\CF_V\lambda_{g^{-1}} \overline{\zeta_2} )}(q_2,\xi_2)\,
(\CF_V\zeta_1 ')(q_1, \xi_1 )\\
(\CF_V\zeta_2 ')\big(\phi^{-1}(\xi_0+\xi_1)^{-1}q_2,{\phi^{-1}(\xi_0+\xi_1)^{-1}}^\flat\xi_2\big) \, \frac{dq_1 }{|q_1|_V}d\xi _1
 \frac{dq_2}{|q_2|_V}d\xi _2.
\end{multline}
Observe that the integrand in \eqref{EQ1} is absolutely convergent, uniformly in $g\in G$.
Indeed, the $L^1$-norm of the integrand is bounded by $\|\zeta_1 \|_2\|\zeta_1 '\|_2\|\zeta_2 \|_2\|\zeta_2 '\|_2\leq \|f_1\|_{A(G)} \|f_2\|_{A(G)}$.

The  formula  \eqref{CFL} implies that
\begin{multline*}
(f_1\otimes f_2)\big(\hat\Delta(\lambda_g)\Omega_\omega^*\big)=|q|_V^{-2}\int |\phi^{-1}(\xi_0+\xi_1)|_V\,
\overline\omega\big(\phi^{-1}(\xi_0+\xi_1),\phi^{-1}(\xi_0+\xi_1)^{-1}\phi^{-1}(\xi_0+\xi_1+\xi_2)\big)\\
(\CF_V \zeta_1 )(qq_1,-q^\flat\xi_1)\,(\CF_V \zeta_2 )(qq_2,-q^\flat\xi_2)\,
(\CF_V\zeta_1 ')(q_1, \xi_1 )\\
(\CF_V\zeta_2 ')\big(\phi^{-1}(\xi_0+\xi_1)^{-1}q_2,{\phi^{-1}(\xi_0+\xi_1)^{-1}}^\flat\xi_2\big) \,
e^{-i\langle q^\flat(\xi_1+ \xi_2),v\rangle}\, \frac{dq_1 }{|q_1|_V}d\xi _1
 \frac{dq_2}{|q_2|_V}d\xi _2.
\end{multline*}
Performing the affine change of variable $\xi_2\mapsto -{q^{-1}}^\flat\xi_2-\xi_1$ and using the Theorem of Fubini, we get:
\begin{align}
\label{EQ2}
(f_1\otimes f_2)\big(\hat\Delta(\lambda_g)\Omega_\omega^*\big)=(\CF_V^* \tilde f)(q,v),
\end{align}
where  we have defined
\begin{multline*}
\tilde f(q,\xi):=|q|_V\!\!\int |\phi^{-1}(\xi_0+\xi_1)|_V\,
\overline\omega\big(\phi^{-1}(\xi_0+\xi_1),\phi^{-1}(\xi_0+\xi_1)^{-1}\phi^{-1}(\xi_0-{q^{-1}}^\flat\xi)\big)\\
(\CF_V \zeta_1 )(qq_1,-q^\flat\xi_1)
(\CF_V \zeta_2 )(qq_2,\xi+q^\flat\xi_1)\,
(\CF_V\zeta_1 ')(q_1, \xi_1 )\\
(\CF_V\zeta_2 ')\big(\phi^{-1}(\xi_0+\xi_1)^{-1}q_2,-{\phi^{-1}(\xi_0+\xi_1)^{-1}}^\flat({q^{-1}}^\flat\xi+\xi_1)\big) 
\,\frac{dq_1 }{|q_1|_V}d\xi _1
 \frac{dq_2}{|q_2|_V}.
\end{multline*}
As a  consequence of the fact that the integrand of the RHS of the equality \eqref{EQ1} is  integrable uniformly in $g$,
we  deduce that $\tilde f$ belongs  to $L^\infty(Q,L^1(\hat V))$ so that the RHS of \eqref{EQ2} defines a bounded function.
Setting $\xi'=-q^\flat\xi_1$, we get:
\begin{multline*}
\tilde f(q,\xi)=\int |\phi^{-1}(\xi_0-{q^{-1}}^\flat\xi')|_V\,
\overline\omega\big(\phi^{-1}(\xi_0-{q^{-1}}^\flat\xi'),\phi^{-1}(\xi_0-{q^{-1}}^\flat\xi')^{-1}\phi^{-1}(\xi_0-{q^{-1}}^\flat\xi)\big)\\
(\CF_V \zeta_1 )(qq_1,\xi')\,(\CF_V \zeta_2 )(qq_2,\xi-\xi')\,
(\CF_V\zeta_1 ')(q_1, -{q^{-1}}^\flat\xi' )\\(\CF_V\zeta_2 ')\big(\phi^{-1}(\xi_0-{q^{-1}}^\flat\xi')^{-1}q_2,-{\phi^{-1}(q^\flat\xi_0-\xi')^{-1}}^\flat
(\xi-\xi')\big)
\frac{dq_1 }{|q_1|_V}d\xi'
 \frac{dq_2}{|q_2|_V}.
\end{multline*}
Performing the left translations $q_j\mapsto q^{-1}q_j$, $j=1,2$, and using again the Theorem of Fubini, we obtain:
\begin{multline*}
\tilde f(q,\xi)=\int \overline\omega\big(\phi^{-1}(\xi_0-{q^{-1}}^\flat\xi'),\phi^{-1}(\xi_0-{q^{-1}}^\flat\xi')^{-1}\phi^{-1}(\xi_0-{q^{-1}}^\flat\xi)\big)
 |q|_V\\
 \bigg(\int (\CF_V \zeta_1 )(q_1,\xi')\,(\CF_V\zeta_1 ')(q^{-1}q_1, -{q^{-1}}^\flat\xi' )\frac{dq_1 }{|q_1|_V}\bigg)
 |\phi^{-1}(q^\flat\xi_0-\xi')|_V\\\bigg(\int
(\CF_V \zeta_2 )(q_2,\xi-\xi')
(\CF_V\zeta_2 ')\big(\phi^{-1}(q^\flat\xi_0-\xi')^{-1}q_2,-{\phi^{-1}(q^\flat\xi_0-\xi')^{-1}}^\flat(\xi-\xi')\big)
 \frac{dq_2}{|q_2|_V}\bigg)d\xi'.
\end{multline*}
Observe  then that for $f=\vf\ast\check\vf'\in A(G)\cap L^2(G)$, we have
$$
\CF_V(f)(q,\xi)=|q|_V\int (\CF_V\vf)(q_0,\xi) \,(\CF_V\vf')(q^{-1}q_0,-{q^{-1}}^\flat\xi)\,\frac{dq_0  }{|q_0|_V}.
$$
Indeed, note first that $C_c(G)\ast C_c(G)$ is dense in $A(G)\cap L^2(G)$, simultaneously in the norms
of $A(G)$ and $L^2(G)$. This is proven in the second part of the proof of \cite[Lemma 3.11]{BGNT3}. Therefore, it is enough
to prove this equality when $\vf,\vf'\in C_c(G)$ and in this case, the proof  easily  follows by the Theorem of Fubini:
\begin{multline*}
\CF_V(\vf\ast\check\vf')(q,\xi)
=\int e^{-i\langle \xi,v\rangle}\,\vf(q_0,v_0) \,\vf'\big(q^{-1}q_0,q^{-1}(v_0-v)\big)\,\frac{dq_0  }{|q_0|_V}dv_0dv\\
=|q|_V\int (\CF_V\vf)(q_0,\xi) \,(\CF_V\vf')(q^{-1}q_0,-{q^{-1}}^\flat\xi)\,\frac{dq_0  }{|q_0|_V}.
\end{multline*}
Hence, we get when $f_1$ and $f_2$ belong to $A(G)\cap L^2(G)$:
\begin{multline*}
\tilde f(q,\xi)=
\int \overline\omega\big(\phi^{-1}(\xi_0-{q^{-1}}^\flat\xi'),\phi^{-1}(\xi_0-{q^{-1}}^\flat\xi')^{-1}\phi^{-1}(\xi_0-{q^{-1}}^\flat\xi)\big)\\
(\CF_Vf_1)(q,\xi')\, (\CF_Vf_2)\big(\phi^{-1}(q^\flat\xi_0-\xi'),\xi-\xi'\big)d\xi',
\end{multline*}
and Lemma \ref{LSPF1} gives the equality \eqref{SES}. This equality also shows that
$A(G)\cap L^2(G)$ closes to an algebra for the product $\star_\omega$. Indeed, by construction the LHS of \eqref{SES} belongs to $L^2(G)$
and the RHS belongs to $A(G)$.
\ep

We close this section by clarifying a question left open in  \cite{BGNT3}. The point is that there is an
alternative  way  to define a dual $2$-cocycle directly 
from the product $\star_\omega$.
 Namely, if we express  $\star_\omega$ as
$$
f_1\star_\omega f_2(g)=\int_{G\times G} k_\omega(g_1,g_2)\, f_1(gg_1)\,f_2(gg_2)\,dg_1 \,dg_2 ,
$$
for a certain (Bruhat) distribution $k_\omega$ on $G\times G$, then one can
define (the adjoint of) a dual cocycle $\tilde\Omega_\omega$ by the formula:
\begin{align}
\label{Alter}
\tilde\Omega_\omega^*=\int_{G\times G} k_\omega(g_1,g_2)\,\lambda_{g_1}\otimes \lambda_{g_2}\,dg_1\, dg_2 .
\end{align}
This follows because  the associativity of $\star_\omega$ is, at least formally, equivalent
to the dual cocycle equation \eqref{eq:dual-cocycle} for $\tilde \Omega_\omega$. Of course such a formula needs to be precisely defined and unitarity
has to be proven. In general,   this can be  a very difficult task. 
Observe however that Theorem  \ref{OmegaF} and \cite[Lemma 3.10]{BGNT3} already show that the dual $2$-cocycles \eqref{eq:dual-cocycle} and \eqref{Alter}
are equal when $\omega=1$.

We are going to prove that this equality still holds for any $2$-cocycle $\omega\in Z^2(Q,\mathbb T)$.
First we need to explain how  \eqref{Alter} is precisely defined.
Starting with the formula given in Lemma \ref{LSPF1} and undoing the Fourier transform without paying attention to convergence issues, we get
$$
f_1\star_\omega f_2(g_0)=\int e^{i\langle q^\flat\xi_0-\xi_0,v\rangle}\,
(\CF_V^*\tilde\omega_q)(w)\, f_1(g_0(1,v))\,f_2(g_0(q,w))
\,\frac{dq}{|q|_V}\,dv\, dw,
$$
where we have defined
\begin{align}
\label{to}
\tilde\omega_q(\xi):=\overline\omega(q,\phi^{-1}({q^{-1}}^\flat\xi+\xi_0)).
\end{align}
Note that when $\omega=1$, the Fourier transform gives a $\delta$-factor and we obtain  the product $\star$ considered in \cite{BGNT3}.
Hence, we are left to give a meaning to the formal left convolution operator:
\begin{align*}
\tilde\Omega_\omega^*&=\int _{G\times V} e^{i\langle q^\flat\xi_0-\xi_0,v\rangle}\,
(\CF_V^*\tilde\omega_q)(w)\, \lambda_{(1,v)}\otimes\lambda_{(q,w)}
\,\frac{dq}{|q|_V}\,dv\,dw.
\end{align*}
Passing to  the adjoint, ignoring the order of integrations and using
$$
\int _ V 
(\overline{\CF_V^*\tilde\omega_q})(w)\lambda_{(q,w)^{-1}}dw
=\lambda_{(q,0)^{-1}}\int _ V (\CF_V^*\overline{\tilde\omega_q})(w)\lambda_{(1,w)}dw
=\lambda_{(q,0)^{-1}}\,\CF_{V}^*\,\tilde\omega_q^*\,\CF_{V},
$$
we can now  define the dual $2$-cocycle \eqref{Alter} as the sesquilinear form on
the algebraic tensor product $C_c(G)\otimes\CF C_c(G)$ (here $\CF C_c(G)$ denotes
the dense subspace of $L^2(G)$ consisting in functions $f$ such that $\CF_Vf\in C_c(Q\times \hat V)$) given by:
\begin{align}
\label{Omega:def}
\tilde\Omega_\omega[f_1 ,f_2 ]:=\int_G e^{-i\langle q^\flat\xi_0-\xi_0,v\rangle}\,\big\langle {f_1 },
\big(\lambda_{(1,v)^{-1}}\otimes\lambda_{(q,0)^{-1}}\big)\big( 1\otimes\CF_{V}^*\,\tilde\omega_q^*
\CF_{V}\big)f_2  \big\rangle\,
\frac{dqdv}{|q|_V}. 
\end{align}

\begin{proposition}
We have for $f_1 ,f_2 \in C_c(G)\otimes \CF C_c(G)$:
$$
\tilde\Omega_\omega[f_1 ,f_2 ]=\langle f_1 ,\Omega_\omega f_2 \rangle.
$$
\end{proposition}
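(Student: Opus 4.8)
The plan is to conjugate the defining formula \eqref{Omega:def} by the unitary $\CF_V\otimes\CF_V$ and to compare the outcome term by term with the explicit multiplier expression for $\Omega_\omega$ established in Theorem~\ref{OmegaF}. Writing $F_j:=(\CF_V\otimes\CF_V)f_j$, which is continuous with compact support in $Q\times\hat V\times Q\times\hat V$ by our hypothesis $f_1,f_2\in C_c(G)\otimes\CF C_c(G)$, and using unitarity of $\CF_V\otimes\CF_V$, it suffices to show that $\tilde\Omega_\omega[f_1,f_2]$ coincides with the pairing of $F_1$ against the operator of Theorem~\ref{OmegaF} applied to $F_2$. First I would transport each factor of the operator $\big(\lambda_{(1,v)^{-1}}\otimes\lambda_{(q,0)^{-1}}\big)\big(1\otimes\CF_V^*\,\tilde\omega_q^*\,\CF_V\big)$ into the $\hat V$-picture. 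By the intertwining formula \eqref{CFL}, $\lambda_{(1,v)^{-1}}$ becomes multiplication by $e^{i\langle\xi_1,v\rangle}$ on the first leg, while $\lambda_{(q,0)^{-1}}$ becomes the substitution $(q_2,\xi_2)\mapsto(qq_2,q^\flat\xi_2)$ together with the prefactor $|q|_V^{-1}$ on the second leg. The convolution operator $\CF_V^*\,\tilde\omega_q^*\,\CF_V$ becomes multiplication by $\overline{\tilde\omega_q}$; evaluating this after the substitution $\xi_2\mapsto q^\flat\xi_2$ and using the definition \eqref{to} of $\tilde\omega_q$ together with ${q^{-1}}^\flat q^\flat=\id$, the multiplier collapses to the scalar $\omega\big(q,\phi^{-1}(\xi_0+\xi_2)\big)$.

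With these substitutions, the inner product $\langle F_1,(\cdots)F_2\rangle$ takes an explicit integral form whose only dependence on $v$ sits in the product $e^{-i\langle q^\flat\xi_0-\xi_0,v\rangle}e^{i\langle\xi_1,v\rangle}$. I would then invoke the change-of-variable formula \eqref{CVF} to replace $\int_Q(\cdots)\tfrac{dq}{|q|_V}$ by $\int_{\hat V}(\cdots)\,d\eta$ with $q=\phi^{-1}(\eta)$, so that $q^\flat\xi_0=\eta$ and the two exponentials combine to $e^{-i\langle\eta-\xi_0-\xi_1,v\rangle}$. The remaining double integral over $(\eta,v)$ is precisely a Fourier transform in $\eta$ followed by Fourier inversion in $v$: carrying out the $\eta$-integral first yields an honest function of $v$, and the subsequent $v$-integral evaluates it, pinning $\eta=\xi_0+\xi_1$, that is $q=\phi^{-1}(\xi_0+\xi_1)$. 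At this value the surviving prefactor is exactly $|\phi^{-1}(\xi_0+\xi_1)|_V^{-1}$, the cocycle value reads $\omega\big(\phi^{-1}(\xi_0+\xi_1),\phi^{-1}(\xi_0+\xi_2)\big)$, and the substitution of the second leg becomes $(q_2,\xi_2)\mapsto\big(\phi^{-1}(\xi_0+\xi_1)q_2,\phi^{-1}(\xi_0+\xi_1)^\flat\xi_2\big)$. These three ingredients match the expression of Theorem~\ref{OmegaF} verbatim, so that $\tilde\Omega_\omega[f_1,f_2]=\langle F_1,(\CF_V\otimes\CF_V)\Omega_\omega(\CF_V^*\otimes\CF_V^*)F_2\rangle=\langle f_1,\Omega_\omega f_2\rangle$, as claimed.

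The only genuinely delicate point, as opposed to the bookkeeping above, is the analytic justification of the last step, since the integral \eqref{Omega:def} over $G$ is oscillatory rather than absolutely convergent (this is the same order-of-integration subtlety flagged when \eqref{Alter} was defined). I expect this to be the main obstacle, and I would resolve it by performing the $\eta$-integration before the $v$-integration: the hypotheses $f_1,f_2\in C_c(G)\otimes\CF C_c(G)$ guarantee that, once the first-leg variables are integrated against the compactly supported $\overline{F_1}$, the integrand regarded as a function of $\eta$ is bounded, measurable and compactly supported, hence lies in $L^1\cap L^2(\hat V)$. For such a profile the inner $\eta$-integral produces a bona fide function of $v$ to which the $L^2$-Fourier inversion theorem applies, with the normalization fixed in the Notations; this makes the formal $\delta$-collapse rigorous without ever invoking a distribution and legitimizes the applications of Fubini at the preceding steps.
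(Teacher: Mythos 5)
Your algebraic skeleton is sound: conjugating \eqref{Omega:def} by $\CF_V\otimes\CF_V$, identifying the three operators via \eqref{CFL} and \eqref{to} (the collapse of the multiplier to $\omega(q,\phi^{-1}(\xi_0+\xi_2))$ is correct), changing variables by \eqref{CVF} and collapsing the phase does reproduce the formula of Theorem \ref{OmegaF}, which is exactly the paper's target. But the analytic justification --- which you yourself identify as the crux --- has genuine defects. First, your opening claim is false: $F_j=(\CF_V\otimes\CF_V)f_j$ is \emph{not} compactly supported. The hypothesis $f_j\in C_c(G)\otimes\CF C_c(G)$ puts the \emph{second} legs of $F_j$ in $C_c(Q\times\hat V)$, but the first legs are partial Fourier transforms of compactly supported functions of $v_1$, hence compactly supported in $q_1$ only, with no control in $\xi_1$ beyond $C_0\cap L^2$. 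This is precisely the source of the oscillation in your picture: the joint integrand has modulus independent of $v$, so the total integral over all variables is never absolutely convergent and Fubini cannot be invoked to reorder the $v$-, $\eta$- and $\xi_1$-integrations. Note also an internal inconsistency: for your pinning $\eta=\xi_0+\xi_1$ to make sense, $\xi_1$ must remain un-integrated, which contradicts your plan of first ``integrating the first-leg variables against $\overline{F_1}$''. Second, the appeal to ``$L^2$-Fourier inversion'' does not close the gap. After the $\eta$-integration you hold the Fourier transform of a bounded, measurable (recall $\omega$ is only Borel), compactly supported function of $\eta$; such a transform lies in $C_0\cap L^2(V)$ but in general not in $L^1(V)$, so the subsequent $v$-integral is not a Lebesgue integral at all, and $L^2$-inversion yields only a norm-limit/a.e. identity, not the pointwise evaluation at $\xi_0+\xi_1$ that your argument needs.

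The paper avoids all of this by never leaving an absolutely convergent regime: it keeps the first tensor legs in the position picture, applying only $1\otimes\CF_V$ to each $f_j$. Then $(1\otimes\CF_V)f_j\in C_c(G\times Q\times\hat V)$, and in the expanded expression for \eqref{Omega:def} the first-leg pairing forces $v_1$ and $v_1+v$ into compact sets (hence $v$ too), while the second-leg supports force $q_2$ and $qq_2$, hence $q$, into compact sets; the whole six-variable integrand is bounded with compact support, Fubini applies freely, and the $v_1$- and $v$-integrals are then honest Fourier transforms of compactly supported continuous functions. This ``pins'' $\xi_1=q^\flat\xi_0-\xi_0$ --- the delta-collapse taken in the direction opposite to yours --- after which the substitution $\xi_1:=q^\flat\xi_0-\xi_0$ via \eqref{CVF} and a final change of variable in $\xi_2$ give the formula of Theorem \ref{OmegaF}. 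Incidentally, this analysis shows that the outer integral in \eqref{Omega:def} is not merely conditionally convergent but compactly supported in $(q,v)$, so your characterization of it as intrinsically oscillatory is itself an artifact of the full Fourier picture. To rescue your route you would have to establish this compact support (or an equivalent absolute-convergence statement) first, which in practice means redoing the mixed-picture argument; as written, the key step is unjustified.
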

\bp
Let $f_1 ,f_2 \in C_c(G)\otimes \CF C_c(G)$.
We have
\begin{align*}
&\tilde\Omega_\omega[f_1 ,f_2 ]=
\int e^{-i\langle q^\flat\xi_0-\xi_0,v\rangle}\,\big\langle ( 1\otimes\CF_{V})\big(1\otimes\lambda_{(q,0)}\big)f_1 ,
\big(\lambda_{(1,v)^{-1}}\otimes1\big)
(1\otimes\tilde\omega_q^* \CF_{V}\big)f_2  \big\rangle\,
\frac{dq}{|q|_V}\,dv.
\end{align*}
Using then \eqref{CFL} and remembering \eqref{to}, we  get
\begin{multline*}
\tilde\Omega_\omega[f_1 ,f_2 ]=\int e^{-i\langle q^\flat\xi_0-\xi_0,v\rangle}\,
\overline{( 1\otimes\CF_{V})f_1 }(q_1,v_1;q_2,{q^{-1}}^\flat\xi_2) \,\omega(q,\phi^{-1}({q^{-1}}^\flat\xi_2+\xi_0))
\\
(1\otimes\CF_{V})f_2 
(q_1,v_1+v;qq_2,\xi_2)
\frac{dq  dv }{|q|_V}\frac{dq_1  dv_1 }{|q_1|_V}\frac{dq_2 d\xi _2 }{|q_2|_V}.
\end{multline*}
Since $( 1\otimes\CF_{V})f_j\in C_c(G\times Q\times\hat V)$, $j=1,2$, we can  use the Theorem of Fubini 
to rewrite the integrals  over the variables $v$ and $v_1$ as Fourier transforms:

\begin{multline*}
\tilde\Omega_\omega[f_1 ,f_2 ]=\int \overline{( \CF_V\otimes\CF_V)f_1 }(q_1,q^\flat\xi_0-\xi_0;q_2,{q^{-1}}^\flat\xi_2)\,
\omega(q,\phi^{-1}({q^{-1}}^\flat\xi_2+\xi_0))\\
(\CF_V\otimes\CF_V)f_2 
(q_1, q^\flat\xi_0-\xi_0 ;qq_2,\xi_2)
\frac{dq }{|q|_V}\frac{dq_1 }{|q_1|_V}\frac{dq_2 d\xi _2 }{|q_2|_V}.
\end{multline*}
Setting $\xi_1:=q^\flat\xi_0-\xi_0$, we arrive at:
\begin{multline*}
\tilde\Omega_\omega[f_1 ,f_2 ]
=\int 
\omega\big(\phi^{-1}(\xi_0+\xi_1),\phi^{-1}(\xi_0+\xi_1)^{-1}\phi^{-1}(\xi_0+\xi_1+\xi_2)\big)\\
\overline{( \CF_V\otimes\CF_V)f_1 }(q_1,\xi_1;q_2,{\phi^{-1}(\xi_0+\xi_1)^{-1}}^\flat\xi_2)
(\CF_V\otimes\CF_V)f_2 
(q_1, \xi_1 ;\phi^{-1}(\xi_0+\xi_1)q_2,\xi_2)
\frac{dq_1  d\xi _1 }{|q_1|_V}\frac{dq_2 d\xi _2 }{|q_2|_V}\\
=\int |\phi^{-1}(\xi_0+\xi_1)|_V^{-1}
\omega\big(\phi^{-1}(\xi_0+\xi_1),\phi^{-1}(\xi_0+\xi_2)\big)
\overline{( \CF_V\otimes\CF_V)f_1 }(q_1,\xi_1;q_2,\xi_2)\\
(\CF_V\otimes\CF_V)f_2 
(q_1, \xi_1 ;\phi^{-1}(\xi_0+\xi_1)q_2,\phi^{-1}(\xi_0+\xi_1)^\flat\xi_2)
\frac{dq_1  d\xi _1 }{|q_1|_V}\frac{dq_2 d\xi _2 }{|q_2|_V},
\end{multline*}
and the proof follows by Theorem  \ref{OmegaF}.
\ep

\subsection{The multiplicative unitary}

By \cite[Proposition 5.4]{DC}, the multiplicative unitary of the locally compact quantum group 
$(W^*(G),\hat\Delta_{\Omega_\omega})$ is given by the formula:
$$
\hat W_{\Omega_\omega}
=(\tilde J\otimes\hat J)\,\Omega_\omega\,\hat W^*\,(J\otimes\hat J)\,\Omega_\omega^*.
$$
As already said,  the missing ingredient to compute it is  $\tilde J$ the modular involution of  
the canonical weight    \eqref{TVF} of the $G$-Galois object $(W^*(\hat G, \Omega_\omega),\Ad\rho)$,
with respect to the GNS map  \eqref{TL}.  To determine $\tilde J$, it is easier to work with the isomorphic (by \cite[Proposition 2.9]{BGNT3})
$G$-Galois object  $(B(L^2(Q)),\Ad \pi_\omega)$. The first task is to determine explicitly this isomorphism, at least in terms of the generators
$\pi_{\Omega_\omega}(f)$ and $\Op_\omega(f)$.
In order to do that, we will consider the third equivalent $G$-Galois object $(L^{\star_\omega}(L^2(G))'',\Ad\lambda)$. Here,
  $L^{\star_\omega}$  is the left regular representation of the algebra $(L^2(G),\star_\omega)$, i$.$e$.$ it is defined by
$$
L^{\star_\omega}(f_1)f_2=f_1\star_\omega f_2.
$$
By conjugation by $\Op$, the representation $L^{\star_\omega}$ corresponds to the left regular representation of $\HS(\CH)$ on itself.
Since this representation is a multiple of the standard representation of  $\HS(\CH)$ on $\CH$, there exists a unique isomorphism
$B(\CH)\cong L^{\star_\omega}(L^2(G))''$ which, for $f\in L^2(G)$, maps  $\Op_\omega(f)$ to  $L^{\star_\omega}(f)$ and
which intertwines $\Ad\pi_\omega$ and $\Ad\lambda$.

\begin{definition}
Denote by $\CL_0(Q\times \hat V)$ the union, over the pairs $(K,L)$ of compact subsets of  $Q$, of the spaces $\CL_{K,L}(Q\times \hat V)$ consisting in
essentially bounded functions on $Q\times \hat V$ essentially supported on the compact set
$$
\{(q,\xi)\in Q\times\hat V| q\in K,\;\xi_0-{q^{-1}}^\flat\xi\in \phi(L)\}.
$$
Let then  $\CF\CL_0(G)$ be the subspace of $L^2(G)$ consisting in functions whose Fourier transforms belong to $\CL_0(Q\times \hat V)$.
\end{definition}

\begin{definition}
Let $T$ be the unbounded operator on $L^2(G)$  given on the domain $\CF\CL_0(G)$  by:
\begin{align}
\label{T}
\CF_V T f(q,\xi)=\frac{\Delta_Q\big(\phi^{-1}(\xi_0-{q^{-1}}^\flat\xi)\big)^{1/2}}{\big|\phi^{-1}(\xi_0-{q^{-1}}^\flat\xi)\big|_V^{1/2}}\CF_Vf(q,\xi),
\end{align}
where we recall that $\Delta_Q$ denotes the modular function of $Q$.
\end{definition}

\begin{remark}
 It is proven in \cite[Lemma 3.16]{BGNT3} that $\CF\CL_0(G)$  is dense in $L^2(G)$, that $\CF\CL_0(G)\cap A(G)$  is dense in $A(G)$,
 that $T$ stabilizes $\CF\CL_0(G)$ and that $T(\CF\CL_0(G)\cap A(G))$ is dense in $L^2(G)$.

 Clearly, the operator $T$ is affiliated to the commutative von Neumann subalgebra of $B(L^2(Q))$ given by $\CF_V^* L^\infty (Q\times\hat V)\CF_V$.
 Note also that the unitary operator
 $\mathcal V_\omega$  defined  in Lemma \ref{Vomega}, belongs to the same commutative von Neumann algebra and stabilizes $\CF\CL_0(G)$. Therefore,
 $T$ and $\mathcal V_\omega$ commute on the domain $\CF\CL_0(G)$.
\end{remark}
 \begin{proposition}
 We have $W^*(\hat G, \Omega_\omega)=\CJ\,L^{\star_\omega}(L^2(G))''\,\CJ$.
 More precisely, we have:
 $$
 \pi_{\Omega_\omega}(f)=\CJ\, L^{\star_\omega}(Tf)\,\CJ\quad\mbox{for all}\quad f\in \CF\CL_0(G)\cap A(G).
 $$
 \end{proposition}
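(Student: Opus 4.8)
The plan is to deduce the formula from two ingredients: the general relation between the representations $\pi_{\Omega_\omega}$ and $L^{\star_\omega}$ recalled in Subsection~\ref{EQ}, and an explicit conjugation identity for the operator $T$. First I would invoke the fact, valid whenever the products \eqref{starOmega} and \eqref{star-prod} agree on $A(G)\cap L^2(G)$, that $\pi_{\Omega_\omega}(f)=\CJ\,\Delta_G^{-1/2}\,L^{\star_\omega}(f)\,\Delta_G^{1/2}\,\CJ$ on a suitable domain. The hypothesis of this relation is precisely the content of Proposition~\ref{ESP} (equation \eqref{SES}) for $\Omega=\Omega_\omega$ and $\star=\star_\omega$, so it applies here. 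It then remains to identify $\Delta_G^{-1/2}L^{\star_\omega}(f)\Delta_G^{1/2}$ with $L^{\star_\omega}(Tf)$ for $f\in\CF\CL_0(G)\cap A(G)$.

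The core step is thus the operator identity $\Delta_G^{-1/2}\,L^{\star_\omega}(f)\,\Delta_G^{1/2}=L^{\star_\omega}(Tf)$ on $\CF\CL_0(G)\cap A(G)$. I would verify it by testing both sides against $h$ in a dense core and passing to partial Fourier transforms. Writing the left-hand side applied to $h$ as $\Delta_G^{-1/2}\big(f\star_\omega(\Delta_G^{1/2}h)\big)$ and applying the product formula \eqref{SPF1} of Lemma~\ref{LSPF1} (noting that $\Delta_G^{1/2}$ is multiplication by a function of $q$ alone, hence commutes with $\CF_V$), one sees that the two $\star_\omega$-integrals carry \emph{identical} cocycle factors, since those depend only on $(q,\xi',\xi)$ and not on the functions; they therefore drop out of the comparison. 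What remains is the scalar identity
\[
\Delta_G|_Q(q)^{-1/2}\,\Delta_G|_Q\big(\phi^{-1}(q^\flat\xi_0-\xi')\big)^{1/2}
=\Delta_G|_Q\big(\phi^{-1}(\xi_0-{q^{-1}}^\flat\xi')\big)^{1/2},
\]
whose right-hand side is exactly the multiplier $(\CF_V Tf)/(\CF_V f)$ read off from \eqref{T}. This in turn follows from the relation
\[
\phi^{-1}(\xi_0-{q^{-1}}^\flat\xi')=q^{-1}\,\phi^{-1}(q^\flat\xi_0-\xi'),
\]
a consequence of $\phi(q)=q^\flat\xi_0$ and the fact that $\flat$ is a left action, together with the fact that $\Delta_G|_Q=\Delta_Q/|\cdot|_V$ is a group homomorphism $Q\to\R_+^*$, so that $\Delta_G|_Q(q^{-1}q'')^{1/2}=\Delta_G|_Q(q)^{-1/2}\Delta_G|_Q(q'')^{1/2}$.

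With the pointwise equality $\pi_{\Omega_\omega}(f)=\CJ\,L^{\star_\omega}(Tf)\,\CJ$ in hand on $\CF\CL_0(G)\cap A(G)$, I would pass to von Neumann algebras by density and continuity. A direct computation with the formulas for $J$ and $\hat J$ shows that they commute, so $\CJ=\CJ^*=\CJ^{-1}$ is a self-adjoint unitary involution and $X\mapsto \CJ X\CJ$ is a spatial $*$-automorphism of $B(L^2(G))$. Since $\CF\CL_0(G)\cap A(G)$ is dense in $A(G)$ and $\pi_{\Omega_\omega}$ is norm-continuous from $A(G)$ to $B(L^2(G))$ (being a slice of a unitary), the operators $\pi_{\Omega_\omega}(f)$, $f\in\CF\CL_0(G)\cap A(G)$, generate $W^*(\hat G,\Omega_\omega)$; dually, $T(\CF\CL_0(G)\cap A(G))$ is dense in $L^2(G)$ and $L^{\star_\omega}$ is norm-continuous with norm $\le 1$ (left multiplication by a Hilbert--Schmidt operator), so the $L^{\star_\omega}(Tf)$ generate $L^{\star_\omega}(L^2(G))''$. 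Applying the $*$-automorphism $\CJ\,\cdot\,\CJ$ to these generators yields $W^*(\hat G,\Omega_\omega)=\CJ\,L^{\star_\omega}(L^2(G))''\,\CJ$.

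The main obstacle is analytic rather than algebraic: making the unbounded operators $T$ and $\Delta_G^{\pm1/2}$, and the formal Fourier-space manipulation above, rigorous. This is exactly the role of the domain $\CF\CL_0(G)$: its support condition $\xi_0-{q^{-1}}^\flat\xi\in\phi(L)$ with $L$ compact keeps the multiplier defining $T$ in \eqref{T} bounded, ensures $Tf\in\CF\CL_0(G)\subset L^2(G)$, and guarantees (via $\CF\CL_0(G)\subseteq\CF\CL(G)$) that Lemma~\ref{LSPF1} applies and all $\star_\omega$-integrals converge, so that the identity established on the core extends to the asserted equality of (closures of) operators. Once these domain issues are settled, the only genuinely new point compared with the case $\omega=1$ of \cite{BGNT3} is the observation that the cocycle factors are function-independent and hence do not obstruct the conjugation identity; the underlying scalar computation is identical to the genuine-representation case.
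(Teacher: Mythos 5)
Your proposal is correct, and its overall skeleton matches the paper's: both arguments start from the relation $\pi_{\Omega_\omega}(f)=\CJ\,\Delta_G^{-1/2}L^{\star_\omega}(f)\Delta_G^{1/2}\,\CJ$ (which the paper obtains from \cite[Section 4.1]{NT} combined with Proposition \ref{ESP}, exactly as you invoke it), then reduce everything to the conjugation identity $\Delta_G^{-1/2}L^{\star_\omega}(f)\Delta_G^{1/2}=L^{\star_\omega}(Tf)$, and conclude with the same density argument (density of $\CF\CL_0(G)\cap A(G)$ in $A(G)$ and of $T(\CF\CL_0(G)\cap A(G))$ in $L^2(G)$, boundedness of both representations, $\CJ\cdot\CJ$ a spatial automorphism). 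Where you genuinely diverge is in the proof of the conjugation identity. The paper computes nothing: it writes $L^{\star_\omega}(f)=\mathcal V_\omega^*\,L^{\star}(\mathcal V_\omega f)\,\mathcal V_\omega$ via Lemma \ref{Vomega}, notes that $\mathcal V_\omega$, $T$ and $\Delta_G$ all lie in (or are affiliated with) the commutative von Neumann algebra $\CF_V^*L^\infty(Q\times\hat V)\CF_V$, hence commute on the relevant domains, and then quotes the untwisted identity $\Delta_G^{-1/2}L^{\star}(f)\Delta_G^{1/2}=L^{\star}(Tf)$ from \cite[Proposition 3.17]{BGNT3} as a black box; it also records first that $\Delta_G$ is affiliated with $L^{\star_\omega}(L^2(G))''$. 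You instead reprove the identity directly from the twisted product formula \eqref{SPF1}, observing that the cocycle factors are function-independent and cancel, which reduces the claim to the scalar identity $\Delta_G|_Q(q)^{-1/2}\Delta_G|_Q\big(\phi^{-1}(q^\flat\xi_0-\xi')\big)^{1/2}=\Delta_G|_Q\big(\phi^{-1}(\xi_0-{q^{-1}}^\flat\xi')\big)^{1/2}$; your derivation of this from $\phi^{-1}(\xi_0-{q^{-1}}^\flat\xi')=q^{-1}\phi^{-1}(q^\flat\xi_0-\xi')$ and multiplicativity of $\Delta_G|_Q=\Delta_Q/|\cdot|_V$ is correct. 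What each route buys: the paper's reduction is shorter and inherits all domain statements ready-made from \cite{BGNT3}, while yours is more self-contained and makes transparent \emph{why} the twist is invisible in the modular conjugation — a fact the paper encodes abstractly as commutation of $\mathcal V_\omega$ with $T$ and $\Delta_G$. The price of your route is the domain bookkeeping, which you should make explicit: fix the core of test vectors, e.g$.$ $h$ with $\CF_V h\in C_c(Q\times\hat V)$, so that $\Delta_G^{1/2}h$ stays in $\CF\CL(G)$ and Lemma \ref{LSPF1} genuinely applies (this plays the role of the paper's inclusion $A(G)\cap C_c(G)\subset L^2(G)\cap\Delta_G^{-1/2}L^2(G)$). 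You flag this issue and assign it correctly to the definition of $\CF\CL_0(G)$, so it is a matter of writing rather than a gap.
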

 \bp
 First we claim that the operator  of multiplication by the modular function $\Delta_G$ on $L^2(G)$ (given with maximal domain) is affiliated with $L^{\star_\omega}(L^2(G))''$.
 Indeed, with $\mathcal V_\omega$ the unitary operator defined  in Lemma \ref{Vomega}, we have
 $$
 L^{\star_\omega}(f)= \mathcal V_\omega^*\,L^{\star}(\mathcal V_\omega f)\,\mathcal V_\omega, \quad\forall f\in L^2(G),
 $$
 and therefore  $L^{\star_\omega}(L^2(G))''=\mathcal V_\omega^*\,L^{\star}(L^2(G))''\,\mathcal V_\omega$. The claim follows then
 from the fact that $\Delta_G$  is affiliated with $L^{\star}(L^2(G))''$ (\cite[Proposition 3.17]{BGNT3}), that $\mathcal  V_\omega$ preserves the
 maximal domain of   $ \Delta_G$ and that $\mathcal  V_\omega$ and  $\Delta_G$ commute on that domain.

Then, we claim that for all $f\in \CF\CL_0(G)$, we have $\Delta_G^{-1/2}L^{\star_\omega}(f)\Delta_G^{1/2}=L^{\star_\omega}(Tf)$ on the domain
$L^2(G)\cap \Delta_G^{-1/2}L^2(G)$.
Indeed, by \cite[Proposition 3.17]{BGNT3}, we have $\Delta_G^{-1/2}L^{\star}(f)\Delta_G^{1/2}=L^{\star}(Tf)$ on
that domain. Then, since $\mathcal  V_\omega$ preserves $L^2(G)\cap \Delta_G^{-1/2}L^2(G)$,
 we deduce the following equalities on the domain $L^2(G)\cap \Delta_G^{-1/2}L^2(G)$:
\begin{align*}
\Delta_G^{-1/2}L^{\star_\omega}(f)\Delta_G^{1/2}=\Delta_G^{-1/2}\mathcal  V_\omega^*L^{\star}(\mathcal  V_\omega f)\mathcal  V_\omega\Delta_G^{1/2}&=
\mathcal  V_\omega^*\Delta_G^{-1/2}L^{\star}(\mathcal  V_\omega f)\Delta_G^{1/2}\mathcal  V_\omega\\
&=\mathcal  V_\omega^* L^{\star}(T\mathcal  V_\omega f) \mathcal  V_\omega=
\mathcal  V_\omega^* L^{\star}(\mathcal  V_\omega T f) \mathcal  V_\omega=L^{\star_\omega}(Tf).
\end{align*}

Let $\star_{\Omega_\omega}$ be the associative product on $A(G)$ given by the RHS of \eqref{SES}.
We know by \cite[Section 4.1]{NT} that $\pi_{\Omega_\omega}(f_1)\check f_2=(f_1\star_{\Omega_\omega}f_2\check)$ for all $f_1\in A(G)$ and
$f_2\in A(G)\cap C_c(G)$. Hence we deduce from Proposition \ref{ESP} that
for $f\in A(G)\cap \CF\CL_0(G)$ we have $\pi_{\Omega_\omega}(f)=\CJ\Delta_G^{-1/2}L^{\star_\omega}(f)\Delta_G^{1/2}\CJ$
  on $A(G)\cap C_c(G)$, hence on $L^2(G)$  by density since both sides of the equality define bounded operators.
As $A(G)\cap C_c(G)\subset L^2(G)\cap \Delta_G^{-1/2}L^2(G)$, we deduce from what precedes that $\pi_{\Omega_\omega}(f)=\CJ\, L^{\star_\omega}(Tf)\,\CJ$,
at least for $f\in A(G)\cap  \CF\CL_0(G)$.

The first statement follows by density of  $\CF\CL_0(G)\cap A(G)$   in $A(G)$ and  of $T(\CF\CL_0(G)\cap A(G))$ in $L^2(G)$.
 \ep

 \begin{corollary}
 Under the isomorphism
 $(W^*(\hat G, \Omega_\omega),\Ad\rho)\cong(B(L^2(Q)),\Ad \pi_\omega)$,
  the operator $\pi_{\Omega_\omega}(f)$, for $f\in A(G)\cap  \CF\CL_0(G)$,  is mapped  to $\Op_\omega(Tf)$ where $T$ is given in \eqref{T}.
 \end{corollary}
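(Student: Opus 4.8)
The plan is to realize the isomorphism of the Corollary as the composite of two isomorphisms of $G$-Galois objects that are already understood on the generators. On one side, I would use the isomorphism $(L^{\star_\omega}(L^2(G))'',\Ad\lambda)\cong(B(L^2(Q)),\Ad\pi_\omega)$ recalled just before the definition of $\CL_0(Q\times\hat V)$, which sends $L^{\star_\omega}(g)$ to $\Op_\omega(g)$. On the other side, the preceding Proposition gives both the global identity $W^*(\hat G,\Omega_\omega)=\CJ\,L^{\star_\omega}(L^2(G))''\,\CJ$ and the identity on generators $\pi_{\Omega_\omega}(f)=\CJ\,L^{\star_\omega}(Tf)\,\CJ$ for $f\in\CF\CL_0(G)\cap A(G)$. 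Hence conjugation by $\CJ$ provides a normal $*$-isomorphism $\Ad\CJ\colon W^*(\hat G,\Omega_\omega)\to L^{\star_\omega}(L^2(G))''$ which, since $\CJ$ is a unitary involution, reads $\pi_{\Omega_\omega}(f)\mapsto L^{\star_\omega}(Tf)$ on this domain.

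The first thing I would verify is that $\Ad\CJ$ is genuinely a morphism of $G$-Galois objects, i.e$.$ that it intertwines the actions $\Ad\rho$ and $\Ad\lambda$. From the formulas $Jf=\bar f$ and $\hat Jf=\Delta_G^{-1/2}\bar{\check f}$ one finds $(\CJ f)(g)=\Delta_G(g)^{-1/2}f(g^{-1})$, so that $\CJ=J\hat J=\hat JJ$ is a unitary involution, and a short direct computation yields $J\lambda_gJ=\lambda_g$ together with $\hat J\lambda_g\hat J=\rho_g$. Combining these gives $\CJ\lambda_g\CJ=\rho_g$ and $\CJ\rho_g\CJ=\lambda_g$, whence for every $x\in W^*(\hat G,\Omega_\omega)$
$$
\Ad\CJ\big(\Ad\rho_g(x)\big)=(\CJ\rho_g\CJ)(\CJ x\CJ)(\CJ\rho_g\CJ)^*=\lambda_g\,\Ad\CJ(x)\,\lambda_g^*=\Ad\lambda_g\big(\Ad\CJ(x)\big),
$$
which is the required equivariance.

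Composing the two isomorphisms then yields a $*$-isomorphism $(W^*(\hat G,\Omega_\omega),\Ad\rho)\to(B(L^2(Q)),\Ad\pi_\omega)$ sending $\pi_{\Omega_\omega}(f)$ to $\Op_\omega(Tf)$ for $f\in\CF\CL_0(G)\cap A(G)$. The last and only genuinely conceptual step is to identify this composite with the isomorphism named in the Corollary. Here I would invoke uniqueness: both objects are $I$-factorial, so any $*$-isomorphism between them has the form $\Ad U$ for a unitary $U$ intertwining the irreducible projective representations implementing the two actions, and such a $U$ is unique up to a phase; consequently $\Ad U$ is unique and must equal the composite constructed above. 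I expect this identification to be the only delicate point, since all the analytic content — the unbounded operator $T$, the domains, and the passage through the star products — has already been absorbed into the preceding Proposition, leaving only the bookkeeping relation $\CJ\lambda_g\CJ=\rho_g$ and this uniqueness argument.
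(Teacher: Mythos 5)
Your construction of the isomorphism is essentially the paper's own (implicit) proof: the Corollary is stated without proof precisely because it follows by composing the preceding Proposition, $\pi_{\Omega_\omega}(f)=\CJ\, L^{\star_\omega}(Tf)\,\CJ$ for $f\in\CF\CL_0(G)\cap A(G)$, with the canonical equivariant isomorphism $B(L^2(Q))\cong L^{\star_\omega}(L^2(G))''$, $\Op_\omega(h)\mapsto L^{\star_\omega}(h)$, fixed in the discussion preceding the definition of $\CL_0(Q\times\hat V)$. Your verification that $\Ad\CJ$ intertwines $\Ad\rho$ with $\Ad\lambda$, via $J\lambda_gJ=\lambda_g$, $\hat J\lambda_g\hat J=\rho_g$ and hence $\CJ\rho_g\CJ=\lambda_g$, is correct and is all the equivariance that is needed.

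Your final ``uniqueness'' step, however, is wrong. Equivariance of an automorphism $\Ad U$ of $(B(\CH),\Ad\pi_\omega)$ does not force $U$ to intertwine $\pi_\omega$ with itself: it only forces $U\pi_\omega(g)U^*\pi_\omega(g)^*$ to be scalar, i.e$.$ $U\pi_\omega(g)U^*=c(g)\,\pi_\omega(g)$ for some character $c$ of $G$. Schur's lemma gives uniqueness of $U$ up to a phase only once $c$ is fixed; it does not give uniqueness of the equivariant isomorphism itself. Concretely, for any character $c$ of $Q$, inflated to $G=Q\ltimes V$, the operator $U_c$ of multiplication by $\overline{c}$ on $\CH=L^2(Q,|q|_V^{-1}dq)$ satisfies $U_c\,\pi_\omega(q,v)\,U_c^*=\overline{c(q)}\,\pi_\omega(q,v)$, so $\Ad U_c$ commutes with $\Ad\pi_\omega(g)$ for every $g$ and is a nontrivial equivariant automorphism of $(B(\CH),\Ad\pi_\omega)$ whenever $c\neq 1$ (and $Q$ has plenty of characters in the basic examples, e.g$.$ $Q=\R^*$). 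Hence equivariant isomorphisms between the two Galois objects form a torsor over a generally nontrivial group, and no appeal to uniqueness can single one out. Fortunately this step is superfluous: no specific isomorphism was fixed beforehand except through this very construction, and all later uses (in particular the computation of the modular involution in Proposition \ref{MW}, and hence of the multiplicative unitary) require only the existence of an equivariant isomorphism acting on the generators as stated, which your composition already provides. Deleting the last paragraph of your argument leaves a complete and correct proof, identical in substance to the paper's.
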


\begin{proposition}
\label{MW}
The modular involution of the canonical weight $\vf$ of the $G$-Galois object  $(W^*(\hat G, \Omega_\omega),\Ad\rho)$ with respect to the GNS map
\eqref{TL} is given by:
$$
\tilde J=\CJ \U_\omega \CJ J,
$$
where $\U_\omega$ is the unitary operator  defined  in Lemma \ref{OU}.
\end{proposition}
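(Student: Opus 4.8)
The plan is to compute the Tomita involution $S\colon\Lambda(a)\mapsto\Lambda(a^*)$ of the weight $\vf$ on a suitable core and to read off $\tilde J$ as the antiunitary factor in its polar decomposition $S=\tilde J\,\Delta_\vf^{1/2}$. I would work on the dense subspace $\CF\CL_0(G)\cap A(G)$, on which all the operators involved are well behaved (see the Remark following \eqref{T}), and use the two ingredients established just above: the GNS map \eqref{TL}, which here reads $\Lambda(\pi_{\Omega_\omega}(f))=\check f$, and the identity $\pi_{\Omega_\omega}(f)=\CJ\,L^{\star_\omega}(Tf)\,\CJ$ with $T$ the positive operator \eqref{T}. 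Recall also that $\CJ$ is a unitary involution, since $\CJ f=\Delta_G^{-1/2}\,\check f$ and hence $\CJ^2=1$.

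First I would determine the $\star_\omega$-involution. Since the isomorphism $L^{\star_\omega}(g)\mapsto\Op_\omega(g)$ turns the operator adjoint on $\HS(\CH)$ into the adjoint for $\star_\omega$, Lemma \ref{OU} (that is, $\Op_\omega(g)^*=\Op_\omega(\U_\omega Jg)$) gives $L^{\star_\omega}(g)^*=L^{\star_\omega}(\U_\omega Jg)$. Using $\CJ^*=\CJ$ and the displayed identity, for $a=\pi_{\Omega_\omega}(f)$ I then obtain
$$
a^*=\CJ\,L^{\star_\omega}(\U_\omega J\,Tf)\,\CJ=\pi_{\Omega_\omega}\big(T^{-1}\U_\omega J\,Tf\big),
$$
so that $\Lambda(a^*)=\big(T^{-1}\U_\omega J\,Tf\big)\check{}\ $. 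Writing $R$ for the flip $Rf=\check f$, this identifies $S$ on the core as the closed antilinear operator $S=R\,T^{-1}\,\U_\omega\,J\,T\,R$.

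The remaining step is the polar decomposition of this $S$. Here I would use $R=\Delta_G^{1/2}\,\CJ$ (equivalent to $\CJ f=\Delta_G^{-1/2}\check f$), together with $\CJ^2=J^2=1$ and the commutation $J\CJ=\CJ J$ coming from the fact that the two modular conjugations $J$ and $\hat J$ commute. The operators $\U_\omega$, $J$, $\CJ$, $R$ all normalise the commutative algebra $\CF_V^*L^\infty(Q\times\hat V)\CF_V$, to which $T$ and $\Delta_G$ are affiliated as positive multipliers; conjugating these positive factors through the former and collecting them on the right therefore rewrites $S$ in the form $(\CJ\,\U_\omega\,\CJ\,J)\,\Delta_\vf^{1/2}$. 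To conclude I would verify that the candidate $\tilde J:=\CJ\,\U_\omega\,\CJ\,J$ is an antiunitary involution (immediate from $\CJ^2=1$, $J^2=1$, $J\CJ=\CJ J$ and unitarity of $\U_\omega$, whence $\tilde J^*=\tilde J$) and that $\tilde J S$ is a positive self-adjoint operator; uniqueness of the polar decomposition then forces $\tilde J$ to be the modular conjugation, as claimed. As a consistency check, specialising to $\omega=1$ turns $\U_\omega$ into the right-convolution operator $\U$ of \cite{BGNT3}, and conjugation by $\CJ$ converts $\rho$ into $\lambda$, recovering exactly the formula for $\tilde J J$ proven in \cite[Proposition 3.24 \& Lemma 3.25]{BGNT3}.

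The main obstacle is precisely this last separation of $S$ into its antiunitary and positive parts: one must track carefully how the flip $R$, the unbounded positive multiplier $T$, the modular function $\Delta_G$ and the Duflo--Moore weight interact under conjugation by $\CJ$, $J$ and the measure-class automorphism underlying $\U_\omega$, and then confirm that the leftover factor $\Delta_\vf^{1/2}$ is genuinely positive rather than merely a unitary times a positive operator. All the manipulations are legitimate on $\CF\CL_0(G)\cap A(G)$, where the density and stabilisation properties recalled in the Remark following \eqref{T} apply, and the resulting identity then extends to all of $L^2(G)$ by density.
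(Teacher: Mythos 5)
Your strategy is in substance the same as the paper's: the proof here is a one--line reference to \cite[Proposition 3.24]{BGNT3}, and that proof is precisely the Hilbert-algebra/Tomita-operator argument you reconstruct --- compute $S\colon\Lambda(a)\mapsto\Lambda(a^*)$ on generators $a=\pi_{\Omega_\omega}(f)=\CJ L^{\star_\omega}(Tf)\CJ$, using that the $\star_\omega$-involution is $f\mapsto\U_\omega Jf$ by Lemma \ref{OU}, and then polar-decompose. The outline is viable: since $T$, $\Delta_G$ are positive elements affiliated to the masa $\CF_V^*L^\infty(Q\times\hat V)\CF_V$, and since conjugating a multiplication operator by $\U_\omega$, $J$, $\CJ$ or $R$ produces again a multiplication operator (the unimodular $\omega$-phase in $\U_\omega$ cancels in $\U_\omega^*\,m\,\U_\omega$), collecting the positive factors on the right does exhibit $S=(\CJ\U_\omega\CJ J)P$ with $P$ multiplication by a positive function on the Fourier side, as you assert.

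Two points, however, are genuinely defective as written. First, your parenthetical proof that $\tilde J=\CJ\U_\omega\CJ J$ is an involution is wrong: from $\CJ^2=J^2=1$ and $J\CJ=\CJ J$ one gets $(\CJ\U_\omega\CJ J)^2=\CJ\,\U_\omega J\U_\omega J\,\CJ$, and unitarity of $\U_\omega$ does \emph{not} give $\U_\omega J\U_\omega J=1$; what is needed is $J\U_\omega J=\U_\omega^*$, i.e.\ that $f\mapsto\U_\omega Jf$ is an involution of the Hilbert algebra $(L^2(G),\star_\omega)$, which follows from Lemma \ref{OU} together with $\Op_\omega(f)^{**}=\Op_\omega(f)$ and injectivity of $\Op_\omega$ --- not from unitarity alone. (This check is in fact redundant: once $\tilde JS\geq 0$ is established, uniqueness of the polar decomposition identifies $\tilde J$, and $\tilde J^2=1$ is automatic from $S^2\subseteq 1$.) Second, ``extends to all of $L^2(G)$ by density'' is not sufficient to conclude. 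To read $\tilde J$ off from $S=\tilde J P$ on your domain, you must know that the image of $\CF\CL_0(G)\cap A(G)$ under $f\mapsto\check f$ is a \emph{core} for the closed operator $S$ (and that $P$ restricted there is essentially self-adjoint), and also that $T^{-1}\U_\omega JT$ maps this domain back into the set of $h$ for which both $\pi_{\Omega_\omega}(h)=\CJ L^{\star_\omega}(Th)\CJ$ and $\Lambda(\pi_{\Omega_\omega}(h))=\check h$ are available; density of a subspace of the domain of a closed operator does not make it a core. These domain verifications are exactly the ``rather tedious analysis'' that the paper's citation to \cite{BGNT3} encapsulates, so your proposal, while following the right route, leaves open the step where the actual work of that proof resides.
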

\bp
The proof is  identical to the one of   \cite[Proposition 3.24]{BGNT3}.
\ep

Let  $\hat  W_\Omega$ be the multiplicative unitary of the locally compact quantum group  $(W^*(G),\Omega\hat\Delta(.)\Omega^*)$ associated with the dual cocycle
$\Omega$ underlying the genuine representation $\pi$.
 By \cite[Theorem 3.26]{BGNT3} we have for $f\in L^2(Q\times\hat V\times Q\times\hat V)$ (with the measure $|q_1|^{-1}dq_1d\xi_1 |q_2|^{-1}dq_2d\xi_2 $):
$$
(\CF_V\otimes\CF_V)\hat W_{\Omega} (\CF_V^*\otimes\CF_V^*)f=d_w^{1/2} \, (f\circ w),
$$
where $w:(Q\times\hat V)^2\to (Q\times\hat V)^2$ is the pentagonal transformation given in \eqref{PTO}
and $d_w$ is its  Radon-Nikodym derivative.
By \cite[Theorem 4.1]{BGNT3}, the pentagonal transformation $w$ comes from the matched pair
$G_1=Q$, $G_2=(1,\xi_0) Q(1,\xi_0)^{-1}$ of subgroups of the dual crossed product $Q\ltimes\hat V$.

We are ready to formulate the main result of this paper.

\begin{theorem}
For $\omega\in Z^2(Q, \T)$, let $\Theta_\omega: (Q\times\hat V)\times (Q\times\hat V)\to\T$ be
the measurable function given a.e$.$ by
\begin{multline}
\label{Teom}
\Theta_\omega(q_1,\xi_1;q_2,\xi_2):=
\omega\big(\phi^{-1}(\xi_0+\xi_1),\phi^{-1}(\xi_0+\xi_1)^{-1}\phi^{-1}(\xi_0+{q_2^{-1}}^\flat\xi_2)\big)
\\ \overline\omega\big(\phi^{-1}(\xi_0+q_2^\flat\xi_1),\phi^{-1}(\xi_0+q_2^\flat\xi_1)^{-1}\phi^{-1}(\xi_0+\xi_2)\big).
\end{multline}
The multiplicative unitary $\hat W_{\Omega_\omega}$ of the locally compact quantum group
$(W^*(G),\Omega_\omega\hat\Delta(.)\Omega_\omega^*)$ is given by
$$
\hat W_{\Omega_\omega}=(\CF_V^*\otimes\CF_V^*) \Theta_\omega (\CF_V\otimes\CF_V)\hat W_{\Omega}.
$$
Consequently,  $(W^*(G),\Omega_\omega\hat\Delta(.)\Omega_\omega^*)$
is isomorphic to the cocycle bicrossed product quantum group defined
by the matched pair $G_1=Q$, $G_2=(1,\xi_0) Q(1,\xi_0)^{-1}$ of subgroups of $Q\ltimes\hat V$ and by the pentagonal $2$-cocycle $\Theta_\omega$.
Moreover, the map $\omega\mapsto \Theta_\omega$ induces a group homomorphism from measurable cohomology $H^2(Q,\T)$
to  pentagonal cohomology  $H^2(w,\T)$.
\end{theorem}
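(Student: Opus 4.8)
The plan is to establish the three assertions in turn, the first being the computational heart of the statement. For the formula for $\hat W_{\Omega_\omega}$ I would start from De Commer's expression \eqref{DMU} and substitute the two data that distinguish the projective case from the genuine one: the modular involution $\tilde J=\CJ\,\U_\omega\,\CJ J$ furnished by Proposition \ref{MW}, and the factorization $\Omega_\omega=U_\omega\,\Omega$ recorded just after Theorem \ref{OmegaF}. Here $U_\omega$ acts, after conjugation by $\CF_V\otimes\CF_V$, as multiplication by $\omega\big(\phi^{-1}(\xi_0+\xi_1),\phi^{-1}(\xi_0+\xi_2)\big)$, while $\U_\omega$ differs from its $\omega=1$ counterpart $\U$ only by the explicit $\T$-valued multiplier read off from Lemma \ref{OU}. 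Conjugating the whole expression by $\CF_V\otimes\CF_V$ turns every factor into an operator whose Fourier-side action is already known: $\hat W^*$ by \eqref{WF}, $\CJ$ by \eqref{JJF}, $\Omega_\omega$ by Theorem \ref{OmegaF}, and $J,\hat J$ by their elementary definitions.

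The core of the argument is then to rerun, essentially verbatim, the computation of \cite[Theorem 3.26]{BGNT3} that established $(\CF_V\otimes\CF_V)\hat W_\Omega(\CF_V^*\otimes\CF_V^*)f=d_w^{1/2}\,(f\circ w)$, but now carrying along the extra $\omega$-phases coming from $\U_\omega$ versus $\U$ and from $U_\omega$ versus $1$. Since these phases are pure multipliers, they are untouched by the point transformations implemented by $w$, $\hat W$ and the reflections built into $J,\hat J,\CJ$, except through the change of their arguments (and complex conjugation whenever an antilinear factor is commuted past them). Collecting all of them and simplifying by means of the cocycle identity \eqref{CE} for $\omega$, I expect them to assemble into a single left multiplication by the function $\Theta_\omega$ of \eqref{Teom}, yielding $(\CF_V\otimes\CF_V)\hat W_{\Omega_\omega}(\CF_V^*\otimes\CF_V^*)f=\Theta_\omega\,d_w^{1/2}\,(f\circ w)$, which is the asserted identity $\hat W_{\Omega_\omega}=(\CF_V^*\otimes\CF_V^*)\Theta_\omega(\CF_V\otimes\CF_V)\hat W_\Omega$. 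This phase bookkeeping, namely keeping track of which transformed argument each occurrence of $\omega$ or $\overline\omega$ is evaluated at and matching the outcome to \eqref{Teom}, is the main obstacle; everything else is formal.

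For the second assertion I would argue as follows. By \cite[Proposition 5.4]{DC} the operator $\hat W_{\Omega_\omega}$ is the multiplicative unitary of the genuine locally compact quantum group $(W^*(G),\Omega_\omega\hat\Delta(.)\Omega_\omega^*)$, hence it satisfies the pentagonal equation. Conjugation by the unitary $\CF_V\otimes\CF_V$ preserves this equation, so the operator $f\mapsto\Theta_\omega\,d_w^{1/2}\,(f\circ w)$ satisfies it as well; this is exactly the statement that $\Theta_\omega$ is a pentagonal $2$-cocycle for $w$ in the sense of Subsection \ref{CBPPC}, and that this operator is the corresponding $W_{\Theta_\omega}$. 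The quantum group is therefore the cocycle bicrossed product \cite{VV} attached to the matched pair $G_1=Q$, $G_2=(1,\xi_0)Q(1,\xi_0)^{-1}$ identified in \cite[Theorem 4.1]{BGNT3}, together with $\Theta_\omega$.

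Finally, for the cohomological statement I would work on two levels. Multiplicativity $\Theta_{\omega_1\omega_2}=\Theta_{\omega_1}\Theta_{\omega_2}$ is immediate from \eqref{Teom}, since each factor of $\Theta_\omega$ is $\omega$ (respectively $\overline\omega$) evaluated at points independent of $\omega$; thus $\omega\mapsto\Theta_\omega$ is a homomorphism from $Z^2(Q,\T)$ into the group of pentagonal $2$-cocycles. It remains to see that coboundaries are sent to trivial pentagonal cocycles. For this I would invoke Theorem \ref{OmegaF}: if $\omega'=\frac{u\otimes u}{\Delta u}\,\omega$ with $u\in L^\infty(Q)$ unitary, then $\Omega_{\omega'}=(\lambda(\tilde u)\otimes\lambda(\tilde u))\Omega_\omega\hat\Delta(\lambda(\tilde u))^*$, where $\lambda(\tilde u)$ lies in $\CF_V^* L^\infty(\hat V)\CF_V$ and acts, after conjugation by $\CF_V$, as multiplication by $a:=u\circ\eta$ with $\eta(\xi)=\phi^{-1}(\xi_0+\xi)$. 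Transporting this equivalence through the formula of the first part shows that $W_{\Theta_{\omega'}}$ and $W_{\Theta_\omega}$ differ by conjugation by $a\otimes a$, that is $\Theta_{\omega'}=\frac{a\otimes a}{(a\otimes a)\circ w}\,\Theta_\omega$; as the prefactor is a trivial pentagonal cocycle, $[\Theta_{\omega'}]=[\Theta_\omega]$ in $H^2(w,\T)$. Hence $\omega\mapsto[\Theta_\omega]$ descends to the desired group homomorphism $H^2(Q,\T)\to H^2(w,\T)$. Alternatively, one can verify directly from \eqref{Teom} and \eqref{PTO} that $\Theta_{\partial u}=\frac{a\otimes a}{(a\otimes a)\circ w}$ with this same $a$.
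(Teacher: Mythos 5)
Your treatment of the first two assertions is essentially the paper's own proof: one inserts $\tilde J=\CJ\,\U_\omega\,\CJ J$ from Proposition \ref{MW} into De Commer's formula \eqref{DMU}, conjugates by $\CF_V\otimes\CF_V$, reruns the computation of \cite[Theorem 3.26]{BGNT3} keeping track of the extra $\omega$-factors (read off from Theorem \ref{OmegaF} and Lemma \ref{OU}), and collapses them with the cocycle identity \eqref{CE}; the cocycle bicrossed product statement is then deduced from the pentagonal equation exactly as you argue. The flaw is in the third assertion.

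Your handling of coboundaries rests on the claim that for $\omega=\partial u$ the trivializing function is $a=u\circ\eta$, i.e. $a(q,\xi)=u(\phi^{-1}(\xi_0+\xi))$, a function of $\xi$ alone; this is false. Abbreviate $A=\phi^{-1}(\xi_0+\xi_1)$, $B=\phi^{-1}(\xi_0+{q_2^{-1}}^\flat\xi_2)$, $C=\phi^{-1}(\xi_0+q_2^\flat\xi_1)$, $D=\phi^{-1}(\xi_0+\xi_2)$. Formula \eqref{Teom} gives $\Theta_{\partial u}=\frac{u(A)\,u(A^{-1}B)\,u(D)}{u(B)\,u(C)\,u(C^{-1}D)}$, while from \eqref{PTO}, using $q_2\,\phi^{-1}({q_2^{-1}}^\flat\xi_0+\xi_1)=C$ and $\xi_0+{\phi^{-1}({q_2^{-1}}^\flat\xi_0+\xi_1)^{-1}}^\flat({q_2^{-1}}^\flat\xi_2-\xi_1)=\phi(C^{-1}D)$, your $\xi$-only function yields
\begin{align*}
\frac{a\otimes a}{(a\otimes a)\circ w}(q_1,\xi_1;q_2,\xi_2)=\frac{u(A)\,u(D)}{u(C)\,u(C^{-1}D)}.
\end{align*}
The two expressions differ by the factor $u(A^{-1}B)/u(B)$, which is not identically $1$: at $q_2=e$ one has $B=D$ and $C=A$, so $\Theta_{\partial u}=1$ while your quotient equals $u(D)/u(A^{-1}D)$, and requiring equality a.e. forces $u$ to be a.e. constant. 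The function that actually trivializes $\Theta_{\partial u}$ is the $q$-dependent one the paper exhibits, namely $a(q,\xi)=u\big(\phi^{-1}(\xi_0+\xi)\big)\,\overline u\big(\phi^{-1}(\xi_0+{q^{-1}}^\flat\xi)\big)$, whose second factor supplies exactly the missing $u(A^{-1}B)/u(B)$.

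This computation refutes, rather than merely leaves unjustified, your primary ``transport'' argument: combined with the formula of the first part, conjugation of $W_{\Theta_\omega}$ by $(u\circ\eta)\otimes(u\circ\eta)$ would give precisely the false identity above. The underlying reason is that the multiplicative unitary \eqref{DMU} is built not only from $\Omega_\omega$ but also from the modular involution $\tilde J$ of the canonical weight \eqref{TVF} in the fixed GNS identification \eqref{TL}, and these data do not transform by plain conjugation by $\lambda(\tilde u)$ under a cohomologous change of the dual cocycle; for instance, commuting $\hat\Delta(\lambda(\tilde u))$ past $(J\otimes\hat J)$ already produces commutant-side factors of the type $\hat J\lambda(\tilde u)\hat J$, which are the source of the $q$-dependence of the correct $a$. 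So the third assertion requires either the paper's direct verification with the corrected, $q$-dependent $a$, or a transport argument that genuinely tracks how the GNS and modular data change.
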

\bp
From Proposition \ref{MW}, we have  the formula:
$$
\hat W_{\Omega_\omega}
=(\CJ \U_\omega J\CJ\otimes\hat J)\,\Omega_\omega\,\hat W^*\,(J\otimes\hat J)\,\Omega_\omega^*.
$$
The explicit expression for $(\CF_V^*\otimes\CF_V^*)\hat W_{\Omega_\omega} (\CF_V\otimes\CF_V)$ follows then by
an easy generalization of the computations given in the proof of \cite[Theorem 3.26]{BGNT3}; one just has to keep track of the three extra $\omega$-factors.
More precisely,  Theorem \eqref{OmegaF} (and its proof) gives the formulas for $(\CF_V\otimes\CF_V)\Omega_\omega (\CF_V^*\otimes\CF_V^*)$ and  for
$(\CF_V\otimes\CF_V)\Omega_\omega^* (\CF_V^*\otimes\CF_V^*)$. We also have (see the proof of \cite[Theorem 3.26]{BGNT3}):
\begin{align*}
(\CF_V J\CF_V^*\,f)(q,\xi)=\overline{f(q,-\xi)},\qquad
(\CF_V \hat J\CF_V^*\,f)(q,\xi)=\frac{|q|^{3/2}}{\Delta_Q(q)^{1/2}}\overline{f\big(q^{-1},{q^{-1}}^\flat\xi\big)},
\end{align*}
and
\begin{equation*}
\big((\CF_V\otimes \CF_V)\hat W^* (\CF_V^*\otimes \CF_V^*)f\big)(q_1,\xi_1;q_2,\xi_2)
= |q_2| f\big(q_2^{-1}q_1,{q_2^{-1}}^\flat\xi_1;q_2,\xi_1+\xi_2\big).
\end{equation*}
Moreover, we deduce from Lemma \ref{OU} that we have  a.e$.$
\begin{multline*}
(\CF_V\CJ\CU_\omega\CJ\CF_V^*f)(q,\xi)
=\frac{ |\phi^{-1}(\xi_0+\xi)|^{3/2}}{ \Delta_Q\big(\phi^{-1}(\xi_0+\xi)\big)^{1/2}}\omega(\phi^{-1}(\xi_0+\xi),\phi^{-1}(\xi_0+\xi)^{-1})\\
f\big(\phi^{-1}(\xi_0+\xi)^{-1}q,{\phi^{-1}(\xi_0+\xi)^{-1}}^\flat\xi\big).
\end{multline*}
From all that, we get a.e$.$
\begin{multline*}
\big((\CF_V\otimes\CF_V)(\CJ \U_\omega \CJ\otimes1)(J\otimes\hat J)\,\Omega_\omega\,\hat W^*\,(J\otimes\hat J)\,
\Omega_\omega^* (\CF_V^*\otimes\CF_V^*)f\big)(q_1,\xi_1;q_2,\xi_2)=\\
\omega(\phi^{-1}(\xi_0+\xi_1),\phi^{-1}(\xi_0+\xi_1)^{-1})
 \overline{\omega}\big(\phi^{-1}(\xi_0+\xi_1)^{-1},\phi^{-1}(\xi_0+{q_2^{-1}}^\flat \xi_2)\big)\\
 \overline{\omega}\big(\phi^{-1}(\xi_0+q_2^\flat\xi_1),\phi^{-1}(\xi_0+q_2^\flat\xi_1)^{-1}\phi^{-1}(\xi_0+\xi_2)\big) 
 |\phi^{-1}({q_2^{-1}}^\flat\xi_0+\xi_1)|\\
 f\big(q_2q_1,q_2^\flat\xi_1;\phi^{-1}({q_2^{-1}}^\flat\xi_0+\xi_1)^{-1}\phi^{-1}(\xi_0+\xi_1),{\phi^{-1}({q_2^{-1}}^\flat \xi_0+\xi_1)^{-1}}^\flat
({q_2^{-1}}^\flat\xi_2-\xi_1)\big).
\end{multline*}
The cocycle identity for $(\phi^{-1}(\xi_0+\xi_1),\phi^{-1}(\xi_0+\xi_1)^{-1},\phi^{-1}(\xi_0+{q_2^{-1}}^\flat \xi_2))$ (plus our normalization) gives
\begin{multline*}
\omega\big(\phi^{-1}(\xi_0+\xi_1),\phi^{-1}(\xi_0+\xi_1)^{-1}\phi^{-1}(\xi_0+{q_2^{-1}}^\flat\xi_2)\big)
{\omega}\big(\phi^{-1}(\xi_0+\xi_1)^{-1},\phi^{-1}(\xi_0+{q_2^{-1}}^\flat \xi_2)\big)\\=
\omega(\phi^{-1}(\xi_0+\xi_1),\phi^{-1}(\xi_0+\xi_1)^{-1}),
\end{multline*}
from which the formula follows.

Since $\hat  W_\Omega$ and $\hat W_{\Omega_\omega}$ are multiplicative unitaries,
we deduce that $\Theta_\omega$ defines a class in the pentagonal cohomology group $H^2(w,\T)$. Thus
 $(W^*(G),\Omega_\omega\hat\Delta(.)\Omega_\omega^*)$
is isomorphic to a cocycle bicrossed product quantum group.

Clearly the map $\omega\mapsto\Theta_\omega$ is a group homomorphism. It remains to show that this map induces a map 
in cohomology  $H^2(Q,\T)\to H^2(w,\T)$.
So assume that $\omega$ is trivial, that is $\omega=\partial u=\frac{u\otimes u}{\Delta u}$, where $u:Q\to\T$ is measurable.
We need to show that $\Theta_{\partial u}$ is
trivial as well, i.e$.$ that  there exists a measurable function $a:Q\times\hat V\to\T$, such that $\Theta_{\partial u}=\frac{a\otimes a}{( a\otimes a)\circ w}$.
A computation shows that:
\begin{multline*}
\Theta_{\partial u}(q_1,\xi_1;q_2,\xi_2)=
\frac{u(\phi^{-1}(\xi_0+\xi_1))u(\phi^{-1}(\xi_0+\xi_1)^{-1}\phi^{-1}(\xi_0+{q_2^{-1}}^\flat\xi_2))}
{u(\phi^{-1}(\xi_0+{q_2^{-1}}^\flat\xi_2))}\\
\frac{u(\phi^{-1}(\xi_0+\xi_2))}{u(\phi^{-1}(\xi_0+q_2^\flat\xi_1))u(\phi^{-1}(\xi_0+q_2^\flat\xi_1)^{-1}\phi^{-1}(\xi_0+\xi_2))}.
\end{multline*}
Now, consider the function $a:Q\times \hat V\to\T$  defined a.e$.$ by
$$
a(q,\xi):=u\big(\phi^{-1}(\xi_0+\xi)\big)
\ \overline u\big(\phi^{-1}(\xi_0+{q^{-1}}^\flat\xi)\big).
$$
Then we have:
\begin{multline*}
\frac{(a\otimes a) }{( a\otimes  a) \circ w_\Omega}(q_1,\xi_1;q_2,\xi_2)=\frac{u\big(\phi^{-1}(\xi_0+\xi_1)\big)}{ u\big(\phi^{-1}(\xi_0+{q_1^{-1}}^\flat\xi_1)\big)}
\frac{u\big(\phi^{-1}(\xi_0+\xi_2)\big)}{ u\big(\phi^{-1}(\xi_0+{q_2^{-1}}^\flat\xi_2)\big)}
\frac{u\big(\phi^{-1}(\xi_0+{q_1^{-1}}^\flat\xi_1)\big)}{ u\big(\phi^{-1}(\xi_0+q_2^\flat\xi_1)\big)}\\
\frac{ u\big(\phi^{-1}(\xi_0+{\phi^{-1}(\xi_0+\xi_1)^{-1}}^\flat
({q_2^{-1}}^\flat\xi_2-\xi_1))\big)}
{u\big(\phi^{-1}(\xi_0+{\phi^{-1}({q_2^{-1}}^\flat\xi_0+\xi_1)^{-1}}^\flat
({q_2^{-1}}^\flat\xi_2-\xi_1))\big)},
\end{multline*}
which after some simplifications gives exactly $\Theta_{\partial u}(q_1,\xi_1;q_2,\xi_2)$.
\ep

We see no reason why the group homomorphism
\begin{equation}
\label{HtoH}
H^2(Q,\T)\to H^2(w,\T),\quad [\omega]\to[\Theta_\omega],
\end{equation}
 should be surjective in general but we believe that it is always injective. 
 However, at the moment we are only able to prove a weaker statement. Namely, under a slightly stronger
assumption on the dual orbit,  the restriction of this map 
 to the continuous cohomology group $H^2_c(Q,\T)$ is indeed injective:
\begin{proposition}
Assume that the  map $\phi:Q\to \hat V$, $q\mapsto q^\flat\xi_0$ is open.
If $\omega:Q\times Q\to\mathbb \T$ is a  continuous $2$-cocycle such that $[\Theta_\omega]=[1]\in H^2(w,\T)$,
then $[\omega]=[1]\in H^2(Q,\T)$.
\end{proposition}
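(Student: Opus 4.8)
The plan is to unwind the hypothesis $[\Theta_\omega]=[1]$ into a single functional equation and then read off a measurable $u\colon Q\to\T$ with $\omega=\partial u$. By definition of pentagonal cohomology, $[\Theta_\omega]=[1]$ means that there is a measurable $a\colon Q\times\hat V\to\T$ with $\Theta_\omega=(a\otimes a)/\big((a\otimes a)\circ w\big)$ almost everywhere, $w$ being the pentagonal transformation \eqref{PTO}; the goal is to produce a measurable $u\colon Q\to\T$ with $\omega=\partial u$ a.e., which is exactly $[\omega]=[1]\in H^2(Q,\T)$.

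The conceptual engine is a rewriting of \eqref{Teom}. Set $\Psi(\eta_1,\eta_2):=\omega\big(\phi^{-1}(\xi_0+\eta_1),\phi^{-1}(\xi_0+\eta_1)^{-1}\phi^{-1}(\xi_0+\eta_2)\big)$, so that $\omega(q_1,q_2)=\Psi\big(q_1^\flat\xi_0-\xi_0,(q_1q_2)^\flat\xi_0-\xi_0\big)$ recovers $\omega$ in full. Inspecting the two factors of \eqref{Teom} gives $\Theta_\omega(q_1,\xi_1;q_2,\xi_2)=\Psi(\xi_1,{q_2^{-1}}^\flat\xi_2)\,\overline{\Psi(q_2^\flat\xi_1,\xi_2)}$, and after the measure-preserving substitution $\xi_2=q_2^\flat\eta_2$ this becomes the clean coboundary $\Theta_\omega=\Psi(\xi_1,\eta_2)/\Psi(q_2^\flat\xi_1,q_2^\flat\eta_2)$ for the \emph{diagonal} dual action of $Q$ on $\hat V\times\hat V$. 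Thus the identity to exploit is $\Psi/\big(\Psi\circ(q_2^\flat\times q_2^\flat)\big)=(a\otimes a)/\big((a\otimes a)\circ w\big)$, in which $\Psi$ still remembers all of $\omega$.

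First I would extract $u$. Since $\Theta_\omega$ is independent of $q_1$, while $a(q_2,\xi_2)$ and $a(w_2)$ do not involve $q_1$ and $w_1=(q_2q_1,q_2^\flat\xi_1)$, the coboundary relation forces $a(q_1,\xi_1)/a(q_2q_1,q_2^\flat\xi_1)$ to be independent of $q_1$. Equivalently $\beta(p,\xi):=a(p,p^\flat\xi)/a(e,\xi)$ is a $\T$-valued $1$-cocycle, $\beta(p'p,\xi)=\beta(p',p^\flat\xi)\beta(p,\xi)$, for the dual action. As $\phi$ identifies this action (off a null set) with the left regular action of $Q$ on itself, every such cocycle is a coboundary: explicitly $\beta(p,\xi)=\tilde U(p^\flat\xi)/\tilde U(\xi)$ with $\tilde U(\eta):=\beta(\phi^{-1}(\eta),\xi_0)$. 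I would then set $u(q):=\tilde U(q^\flat\xi_0-\xi_0)$, the choice dictated by the forward trivialization $a_{\partial u}(q,\xi)=u(\phi^{-1}(\xi_0+\xi))\overline u(\phi^{-1}(\xi_0+{q^{-1}}^\flat\xi))$ found in the proof of the main theorem. Replacing $\omega$ by $\omega/\partial u$ (still denoted $\omega$) and $a$ by $a/a_{\partial u}$ alters neither cohomology class and reduces us to the case $\beta\equiv1$, that is $a(q,\xi)=V({q^{-1}}^\flat\xi)$ with $V(\xi):=a(e,\xi)$; it then suffices to prove that this reduced $\omega$ is a coboundary.

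With $\beta\equiv1$, feeding $a(q,\xi)=V({q^{-1}}^\flat\xi)$ back into the reformulated equation makes the right-hand side independent of $q_2$: the first leg cancels, and a short computation with \eqref{PTO} gives $a(w_2)=V\big({s^{-1}}^\flat(\eta_2-\xi_1)\big)$ with $s=\phi^{-1}(\xi_0+\xi_1)$. Comparing with the left-hand side at $q_2=e$ then yields simultaneously that $V$ is essentially constant by ergodicity of the free dense orbit, and that $\Psi$ is invariant under the diagonal dual action. Unwinding this invariance through $\phi$ shows $\omega(a,b)=\Phi\big(\kappa(a)^{-1}\kappa(ab)\big)$ for some measurable $\Phi\colon Q\to\T$, where $\kappa:=\phi^{-1}(\phi(\cdot)-\xi_0)$. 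The main obstacle is this last step: one must show that a genuine group $2$-cocycle of this special form is necessarily a coboundary, and this is where both standing hypotheses are essential. Continuity of $\omega$ makes $\Psi$, $\Phi$ and $\Theta_\omega$ continuous, so the almost-everywhere identities become genuine ones and limits such as $q_2\to e$ are legitimate, while openness of $\phi$ turns it into a homeomorphism onto a conull open set, which is precisely what lets us control the $\xi_0$-shift hidden in $\kappa$ and restrict the a.e.\ relations to the null slices $\{\xi_1=0\}$ and $\{\xi_2=0\}$ used above. Combining the cocycle identity \eqref{CE} for $\omega$ with the invariant form and the normalization \eqref{NOR} then forces $\Phi\circ\kappa$ to trivialize, so the reduced $\omega$ is a coboundary and hence $[\omega]=[1]$ for the original cocycle.
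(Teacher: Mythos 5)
Your steps 1--4 are, modulo routine measure-theoretic care (Fubini plus iteration of the invariance relation, rather than the continuity you invoke), a faithful repackaging of what the paper itself does: writing $\Theta_\omega(q_1,\xi_1;q_2,q_2^\flat\eta_2)=\Psi(\xi_1,\eta_2)\,\overline{\Psi(q_2^\flat\xi_1,q_2^\flat\eta_2)}$, using $q_1$-independence to produce a $1$-cocycle for the dual action, trivializing it on the free conull orbit, and concluding that modulo a measurable coboundary $\partial u$ the cocycle has the invariant form $\omega'(a,b)=\Phi\big(\kappa(a)^{-1}\kappa(ab)\big)$; the paper packages the same information by solving its functional equation \eqref{E} as $a(q,\xi)=k(\xi)\overline{k}({q^{-1}}^\flat\xi)$ while keeping $\omega$ untouched. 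The problem is your last step, and it is not a detail: it is the entire content of the proposition. You must show that a cocycle of this invariant form, known only to be \emph{measurable} and cohomologous to the continuous $\omega$, is a coboundary, and for this you offer only the assertion that combining \eqref{CE}, the invariant form and \eqref{NOR} ``forces $\Phi\circ\kappa$ to trivialize.'' No such derivation is given, and none can exist at this level of generality: if triviality of invariant-form cocycles followed from the cocycle identity and the normalization alone, the map $[\omega]\mapsto[\Theta_\omega]$ would be injective on all of $H^2(Q,\T)$, which is precisely the stronger statement the authors say they are unable to prove.

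Moreover, your plan for bringing in the hypotheses rests on a false premise. You claim continuity of $\omega$ makes $\Phi$ continuous; it does not. $\Phi$ is manufactured from the trivializing function $a$ (through $\tilde U$, $u$, and the reduced cocycle $\omega'=\omega\,\overline{\partial u}$), and $a$ is only measurable; dividing by $\partial u$ destroys every trace of continuity, which is also why your ``limits such as $q_2\to e$'' are no longer legitimate after the reduction. The paper's proof never performs this reduction: it keeps the identity $\Theta_\omega=(a\otimes a)\big((\overline a\otimes\overline a)\circ w\big)$ with the continuous $\omega$ on the left, and extracts the conclusion by a limit \emph{at infinity} in $Q$: pick $q_n$ with $q_n^\flat\to0$ pointwise on $\hat V$ and evaluate both sides at $(q_1,\xi_1;q_n,q_n^\flat\xi_2)$. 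By continuity of $\omega$, continuity of $\phi^{-1}$ on the open orbit (this is where openness of $\phi$ really enters, not the slicing you describe) and the normalization $\omega(e,e)=1$, the second factor of \eqref{Teom} tends to $1$, so the left side converges to $\omega\big(\phi^{-1}(\xi_0+\xi_1),\phi^{-1}(\xi_0+\xi_1)^{-1}\phi^{-1}(\xi_0+\xi_2)\big)$, while the right side converges to the coboundary expression $k(\xi_1)\,k\big({\phi^{-1}(\xi_0+\xi_1)^{-1}}^\flat(\xi_2-\xi_1)\big)\,\overline{k(\xi_2)}$; equating the two limits yields $\omega=\partial u$ with $u(q)=k({q^{-1}}^\flat\xi_0-\xi_0)$. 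This limit device is the one place where continuity of $\omega$ does real work, and it is entirely absent from your proposal; without it, or a genuine substitute, nothing has been proved.
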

\bp
Denote by $\mathcal O:= \phi(Q)$  the dual orbit. By assumption, $\mathcal O$ is an open set and  $\phi :Q\to\mathcal O$ is a homeomorphism. 
Consider the continuous map
$$
F:\hat V\times Q\times\hat V\to \hat V\times\hat V\times\hat V\times\hat V,\quad
(\xi_1,q,\xi_2)\mapsto (\xi_0+\xi_1,\xi_0+q^\flat\xi_1,\xi_0+\xi_2,\xi_0+{q^{-1}}^\flat\xi_2),
$$
and define the  open  set:
$$
U:=Q\times F^{-1}(\mathcal O\times\mathcal O\times\mathcal O\times\mathcal O)\subset Q\times\hat V\times Q\times\hat V.
$$
It is easy to see that the complement $U^c$ has  measure zero.
From  continuity of the group cocycle $\omega$ we deduce  that the pentagonal cocycle $\Theta_\omega$ given in \eqref{Teom}
is continuous on $U$.
Moreover,  considering the continuous map
$$
H:\hat V\times Q\to\hat V\times\hat V,\quad
(\xi,q)\mapsto (\xi_0+\xi,\xi_0+q^\flat\xi),
$$
we see the pentagonal transformation $w$  given in \eqref{PTO} is continuous on the open dense set
$$
\tilde{U}:=  Q\times H^{-1}(\mathcal O\times\mathcal O)\times \hat V\supset U.
$$

Let $a:Q\times\hat V\to\T$ be a measurable function such that  $\Theta_\omega=(a\otimes a) ((\overline a\otimes\overline a)\circ w)$. Of course, 
this is an
equality of continuous functions at least on  $U$. Explicitly, we have
\begin{multline*}
\frac{a\otimes a}{(\overline a\otimes\overline a)\circ w}(q_1,\xi_1;q_2,\xi_2)=\\
\frac{a(q_1,\xi_1)\,a(q_2,\xi_2)}
{a\big(q_2q_1,q_2^\flat\xi_1\big)\,a\big(\phi^{-1}({q_2^{-1}}^\flat\xi_0+\xi_1)^{-1}\phi^{-1}(\xi_0+\xi_1),{\phi^{-1}({q_2^{-1}}^\flat\xi_0+\xi_1)^{-1}}^\flat
({q_2^{-1}}^\flat\xi_2-\xi_1)\big)}.
\end{multline*}
For $q_1\in Q$ and $\xi_1,\xi_2\in\mathcal O-\xi_0$, we have $(q_1,\xi_1;e,\xi_2)\in U$ and $\Theta_\omega(q_1,\xi_1;e,\xi_2)=1$. Therefore
$$
a(e,\xi_2)=
a\big(e,{\phi^{-1}(\xi_0+\xi_1)^{-1}}^\flat
(\xi_2-\xi_1)\big),
$$
which entails that $a(e,\xi)=a(e,0)$ for all $\xi\in\mathcal O-\xi_0$.
Of course, we can assume further that $a(e,0)=1$. 
Since $\Theta_\omega$ does not depend on the variable $q_1$, we deduce that there exists a measurable function
$c:Q\times\hat V\to\T$ such that
$$
a(q_2q_1,q_2^\flat\xi_1)=c(q_2,\xi_1)\,a(q_1,\xi_1).
$$
Putting $q_1=e$, we obtain
$$
c(q,\xi)=\overline a(e,\xi)\,a(q,q^\flat\xi)=a(q,q^\flat\xi).
$$
This shows  that the measurable function $a:Q\times\hat V\to \T$ satisfies the  functional equation:
\begin{align}
\label{E}
a(q_2q_1,q_2^\flat\xi_1)=a(q_2,q_2^\flat\xi_1)\,a(q_1,\xi_1).
\end{align}
Now,  the (measurable) solutions of the above equation are all of the form
\begin{align}
\label{k}
a(q,\xi)=k(\xi)\overline{k}({q^{-1}}^\flat\xi),
\end{align}
where $k:\hat V\to\T$ is measurable and, of course, we  may assume  $k(0)=1$.
That  a function of the form \eqref{k} solves \eqref{E} is obvious.
Reciprocally, if $a$ solves \eqref{E}, setting $k(\xi):=\overline a(\phi^{-1}(\xi)^{-1},\xi_0)$, for $\xi\ne 0$ and $k(0)=1$, we get
$$
\frac{k(\xi)}{k({q^{-1}}^\flat\xi)}=\frac{a(\phi^{-1}({q^{-1}}^\flat\xi)^{-1},\xi_0)}{a(\phi^{-1}(\xi)^{-1},\xi_0)}=
\frac{a(\phi^{-1}(\xi)^{-1}q,{\phi^{-1}(\xi)^{-1}}^\flat\xi)}{a(\phi^{-1}(\xi)^{-1},{\phi^{-1}(\xi)^{-1}}^\flat\xi)}=a(q,\xi),
$$
where the last equality follows from \eqref{E}.
Therefore, we have
$$
\frac{a\otimes a}{(\overline a\otimes\overline a)\circ w}(q_1,\xi_1;q_2,\xi_2)=
\frac{k(\xi_1)\,k(\xi_2) \,k\big({\phi^{-1}(\xi_0+\xi_1)^{-1}}^\flat
({q_2^{-1}}^\flat\xi_2-\xi_1)\big)}{k(q_2^\flat\xi_1)\,k\big({\phi^{-1}({q_2^{-1}}^\flat\xi_0+\xi_1)^{-1}}^\flat
({q_2^{-1}}^\flat\xi_2-\xi_1)\big)\,k({q_2^{-1}}^\flat\xi_2)}.
$$

Now, fix $q_1\in Q$, $\xi_1,\xi_2\in\mathcal O-\xi_0$ and let
 $(q_n)_{n\in\mathbb N}$ be a sequence in $Q$ such that $q_n^\flat\to 0$ in ${\rm End}(\hat V)$ pointwise.
Since $q_n^\flat \xi_j\to 0\in\mathcal O-\xi_0$, $j=1,2$, we deduce that $(q_1,q_n^\flat \xi_1,q_n,q_n^\flat \xi_2)$ will
eventually belong to $U$.
Moreover, note that
\begin{multline*}
\Theta_\omega(q_1,\xi_1;q_n,q_n^\flat\xi_2)=
\omega\big(\phi^{-1}(\xi_0+\xi_1),\phi^{-1}(\xi_0+\xi_1)^{-1}\phi^{-1}(\xi_0+\xi_2)\big)\\
\overline\omega\big(\phi^{-1}(\xi_0+q_n^\flat\xi_1),\phi^{-1}(\xi_0+q_n^\flat\xi_1)^{-1}\phi^{-1}(\xi_0+q_n^\flat\xi_2)\big),
\end{multline*}
 converges to 
$$
\omega\big(\phi^{-1}(\xi_0+\xi_1),\phi^{-1}(\xi_0+\xi_1)^{-1}\phi^{-1}(\xi_0+\xi_2)\big).
$$
On the other hand, we have
$$
\frac{a\otimes a}{(\overline a\otimes\overline a)\circ w}(q_1,\xi_1;q_n,q_n^\flat\xi_2)=
\frac{k(\xi_1)\,k\big({\phi^{-1}(\xi_0+\xi_1)^{-1}}^\flat
(\xi_2-\xi_1)\big)\,k(q_n^\flat\xi_2)}
{k(\xi_2)\,k(q_n^\flat\xi_1)\,k\big({\phi^{-1}(\xi_0+q_n^\flat\xi_1)^{-1}}^\flat
(q_n^\flat\xi_2-q_n^\flat\xi_1)\big)}.
$$
which converges to
$$
\frac{k(\xi_1) \,k\big({\phi^{-1}(\xi_0+\xi_1)^{-1}}^\flat
(\xi_2-\xi_1)\big)}{k(\xi_2)}.
$$
Setting now $q_j:=\phi^{-1}(\xi_0+\xi_j)^{-1}$ and $u(q):=k({q^{-1}}^\flat\xi_0-\xi_0)$, we get
$$
\omega(q_1,q_1^{-1}q_2)=\frac{u(q_1)u(q_1^{-1}q_2) }{u(q_2)},
$$
and therefore $\omega$ represents the trivial class in $H^2(Q,\mathbb T)$.
\ep

\begin{remark}
When $Q$ is Abelian, we have $H^2(Q,\mathbb T)\simeq H^2_c(Q,\mathbb T)$. In this case, the map \eqref{HtoH} is 
injective as soon as $\phi$ is open.
\end{remark}

\end{document}